\newtheorem{Theorem}{Theorem}[section]
\newtheorem{Lemma}[Theorem]{Lemma}
\newtheorem{Definition}[Theorem]{Definition}
\newtheorem{Corollary}[Theorem]{Corollary}
\newtheorem{Proposition}[Theorem]{Proposition}
\newtheorem{Example}[Theorem]{Example}
\newtheorem{Remark}[Theorem]{Remark}
\newtheorem*{Theorem*}{Theorem}
\newenvironment{Proof}[1][Proof]{\begin{trivlist}
\item[\hskip \labelsep {\bfseries #1}]}{\flushright
$\Box$\end{trivlist}}
\title{Polytope realization of cluster structures}
\author{Jie Pan}
\address{Jie Pan
\newline Department of Mathematics, Faculty of Sciences, University of Sherbrooke, Sherbrooke, Quebec, Canada}
\email{jie.pan@usherbrooke.ca}
\thanks{\textit{Mathematics Subject Classification(2020): 13F60, 52B20}}
\thanks{\textit{Keywords}: cluster algebra, polytope functions, $G$-matrices, $C$-matrices, $g$-fan.}
\date{version of \today}
\renewcommand{\P}{{\mathbb P}}
\newcommand{\Q}{{\mathbb Q}}
\newcommand{\N}{{\mathbb N}}
\newcommand{\R}{{\mathbb R}}
\newcommand{\F}{{\mathcal F}}
\newcommand{\A}{{\mathcal A}}
\newcommand{\T}{{\mathbb T}}
\newcommand{\Z}{{\mathbb Z}}
\newcommand{\p}{{^{\prime}}}
\begin{document}

\begin{abstract}
  Based on the construction of polytope functions and several results about them in \cite{LP}, we take a deep look on their mutation behaviors to find a link between a face of a polytope and a sub-cluster algebra of the corresponding cluster algebra. This find provides a way to induce a mutation sequence in a sub-cluster algebra from that in the cluster algebra in totally sign-skew-symmetric case analogous to that achieved via cluster scattering diagram in skew-symmetrizable case by \cite{GHKK} and \cite{M}.

  With this, we are able to generalize compatibility degree in \cite{CL} and then obtain an equivalent condition of compatibility which does not rely on clusters and thus can be generalized for all polytope functions. Therefore, we could regard compatibility as an intrinsic property of variables, which explains the unistructurality of cluster algebras. According to such cluster structure of polytope functions, we construct a fan $\mathcal{C}$ containing all cones in the $g$-fan.

  On the other hand, we also find a realization of $G$-matrices and $C$-matrices in polytopes by the mutation behaviors of polytopes, which helps to generalize the dualities between $G$-matrices and $C$-matrices introduced in \cite{NZ} and leads to another polytope explanation of cluster structures. This allows us to construct another fan $\mathcal{N}$ which also contains all cones in the $g$-fan.
\end{abstract}
\maketitle
\bigskip
\tableofcontents

\section{Introduction}
The definition of a cluster algebra is formally introduced by Fomin and Zelevinsky in \cite{FZ1}. Since then there are many papers discussing its properties as well as its links with other topics. However, most researchers mainly focus on the skew-symmetrizable cases. In a long period of time, there is no efficient way to systemically study a totally sign-skew-symmetric cluster algebra until Ming Huang and Fang Li showed that an acyclic cluster algebra can always be covered by some cluster algebra associated to certain (infinite) locally finite simply laced acyclic quiver and use such covering to induce plenty of significant properties from (infinite) skew-symmetric case to acyclic sign-skew-symmetric case in \cite{HL}. However, for a cyclic totally sign-skew-symmetric cluster algebra, it is in general hard to know wether such covering exists.

In order to deal with a totally sign-skew-symmetric cluster algebra, inspired by many significant properties of a skew-symmetrizable cluster algebra, such as Laurent phenomenon (\cite{FZ1}), $F_{l;t}$ being a polynomial (\cite{FZ4}) and positivity (\cite{GHKK} for the most general case), as well as the works of Jiarui Fei, Lee, Li Li, Schiffler and Zelevinsky (\cite{LLZ},\cite{LLZ2},\cite{F},\cite{LLS}) which study the Laurent expression of a cluster variable with the help of its Newton polytope, the author construct a polytope function $\rho_h$ for each $h\in\Z^n$ which satisfies the above good properties locally (that is, under limited steps of mutations) with a similar philosophy to that in the construction of greedy elements in some sense, in a joint work with Fang Li (\cite{LP}). Then it turns out polytope functions not only satisfies these properties universally (that is, under any sequence of mutations), but also form a basis of the upper cluster algebra. Therefore in this way we obtain an efficient tool to talk about a totally sign-skew-symmetric cluster algebra.

To us, Newton polytopes provide a perspective to consider problems incident to cluster variables locally so that many results are easier or at least more direct to see since we only need to focus on the Laurent expression itself. Also, because we only need to focus on the Laurent expression of elements and we stay in the (upper) cluster algebra, this method requires less assumptions or restrictions, which often maintains feasible for a totally sign-skew-symmetric cluster algebra.

Essentially, the polytope method provides several ways to degenerate a problem to a lower rank case, by which therefore we usually only need to consider low rank cases. And it is well-known that a cluster algebra of rank 2 is always skew-symmetrizable. In other words, polytope method allows us to deal with a totally sign-skew-symmetric cluster algebra through skew-symmetrizable cluster algebras with simple enough cluster structures, which does not require any extra conditions. In \cite{LP}, such degeneration is mainly induced according to the construction of a polytope $N_h$ (it can be seen as a sum of sub-polytopes) or to the face complex structure of a polytope. In this paper, we mainly apply the second way. Hence we do not so rely on the concrete construction of a polytope $N_h$ here. And that is why the author decide not to repeat the construction of $N_h$ in this paper. Interested readers please refer to \cite{LP}.

During the study of polytope functions, we find they are closely related to theta functions introduced by Gross, Hacking, Keel and Kontsevich in \cite{GHKK}, so we wonder wether it is possible to recover a similar wall-chamber structure for a totally sign-skew-symmetric algebra where the cluster structure can be embedded, as Gross, Hacking, Keel and Kontsevich did in \cite{GHKK} for a skew-symmetrizable cluster algebra.

So in this paper, as a continuation of the work in \cite{LP} as well as a preparation of the above aim, we would like to further focus on the mutation of polytope functions to show several significant results which have already been proved for skew-symmetrizable cluster algebras also hold for totally sign-skew-symmetric cluster algebras.

In \cite{LP}, we mainly focus on the Laurent expression of every single polytope functions, which can be regarded as a generalization of cluster variables. While here in this paper, we try to cast lights on a generalization of clusters. Since we now get a collection of variables, we want to furthermore find some reasonable way to group them together. To us, the ``reasonable'' means it could in some sense explain the logic of clusters. This is my understanding of what cluster scattering diagrams do to cluster algebras. Therefore, we present two possible explanations by polytope functions. The first is done by finding an equivalent condition of compatibility, which naturally induces a generalization of cluster complex. While the second is given by a polytope realization of $G$-matrices and $C$-matrices, which can be regarded as two kinds of tropicalization of clusters.

More concretely, the paper is organized as follows.

In section 2, we recall some basic definitions about cluster algebras, (weighted) polytopes and polyhedral complexes. Then by taking the Nowton polytope of a Laurent polynomial, we can relate any homogenous Laurent expression in an upper cluster algebra with a weighted polytope.

In section 3, we recall some results in \cite{LP} about polytope functions and then based on the exchange relations of cluster variables, we summary how a face of a polytope $N_h$ is changed under a single mutation of the initial seed, which unveils how to induce a mutation sequence in a sub-cluster algebra from that in the original cluster algebra. This is the starting point of our discussion.
\begin{Theorem}
   For any face $S$ of $N_{h}$ and any $k\in[1,n]$, there is a face $S\p$ of $\mu_k(N_{h})$ correlating to $S$ under $\mu_k$, a cluster algebra $\A\p$ of rank $r$ and a vector $f\in\Z^r$, where $r=\max\{dim(S),dim(S\p)\}$ such that

  (i)\;$S$ is isomorphic to $N_{f}|_{\A\p}$ while $S\p$ is isomorphic to $\mu_k(N_{f}|_{\A\p})$ if either $S$ is not incident to any edge parallel to $e_{k}$ and $dim(S)=r-1$ or it contains a segment parallel to $e_{k}$.

  (ii)\;Both $S$ and $S\p$ are isomorphic to $N_f|_{\A\p}$ if $S$ does not contain a segment parallel to $e_{k}$.
\end{Theorem}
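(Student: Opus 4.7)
The plan is to use the face-complex description of polytope functions from \cite{LP} to reinterpret each face of $N_{h}$ as the Newton polytope of a polytope function in a lower-rank sub-cluster algebra, and then to track how the ambient mutation $\mu_{k}$ acts on this description.

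First I would associate to a face $S$ a sub-cluster algebra $\A\p$ as follows. Let $V \subseteq \R^{n}$ be the affine span of $S$ translated to the origin; the primitive lattice vectors generating $V$, together with $e_{k}$ itself when the statement requires it, single out an index subset of $[1,n]$ which indexes the exchange directions of a sub-seed $\Sigma\p$ obtained by freezing the remaining variables. This produces $\A\p$ of rank $r = \max\{\dim(S),\dim(S\p)\}$, and the vector $f \in \Z^{r}$ is read off as the restriction (or appropriate projection) of $h$ onto the coordinates indexing $\A\p$. The face-complex results of \cite{LP} then give the identification $S \cong N_{f}|_{\A\p}$ (or, in the subcase of (i) where $\dim(S) = r-1$, an identification of $S$ with the appropriate codimension-one face of $N_{f}|_{\A\p}$ forced by the construction).

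Next I would define $S\p$ as the image of $S$ under the mutation transformation from $N_{h}$ to $\mu_{k}(N_{h})$ described in \cite{LP}, which is a piecewise-linear map with kink along the hyperplane dual to $e_{k}$ combined with a Minkowski summand in direction $e_{k}$. In case (ii), the hypothesis that $S$ contains no segment parallel to $e_{k}$ forces $S$ to lie entirely on one side of the kinking hyperplane, so $\mu_{k}$ acts on $S$ by a single affine map and $S\p$ is affinely isomorphic to $S$; consequently both coincide with $N_{f}|_{\A\p}$. The consistency check here is that, under the index identification above, $k$ is not among the exchange indices of $\A\p$, so no mutation is induced on the sub-algebra.

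For case (i), the mutation actually reshapes $S$ into a genuinely different face $S\p$. When $S$ is a facet of dimension $r-1$ not incident to any edge parallel to $e_{k}$, the piecewise-linear transformation across the kinking hyperplane, combined with the Minkowski summand in $e_{k}$, promotes $S$ to a face $S\p$ whose structure is precisely that of $\mu_{k}(N_{f}|_{\A\p})$ in the rank-$r$ sub-algebra. When $S$ contains an edge parallel to $e_{k}$, $S$ already crosses the kinking hyperplane and the piecewise-linear action reshapes $S$ internally; matching this reshaping with $\mu_{k}$ applied to $N_{f}|_{\A\p}$ follows from comparing the exchange polynomials along $e_{k}$. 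The main obstacle will be showing that the restriction of the exchange matrix of $\A$ to the coordinates indexing $\A\p$ is a valid exchange matrix of the sub-seed $\Sigma\p$, and that the factor of the exchange polynomial of $\mu_{k}$ seen by $S$ coincides with the exchange polynomial of $\mu_{k}$ in $\A\p$; this is the place where the sign-skew-symmetric (rather than skew-symmetrizable) hypothesis makes the verification delicate, and it is precisely what forces the split of the statement into the two cases according to the relative position of $S$ and $e_{k}$.
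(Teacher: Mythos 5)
Your proposal breaks down at the very first step, where you attach the sub-cluster algebra $\A'$ to a face $S$. You propose to read off ``an index subset of $[1,n]$'' from the primitive lattice vectors spanning the affine hull of $S$ and to obtain $\A'$ by freezing the complementary variables, i.e.\ as a principal submatrix of $B$. That only works when $S$ is an $I$-section of $N_h$ (a face cut out by coordinate hyperplanes); a general face of $N_h$ has edges in non-coordinate directions, so its lattice generating set does not consist of standard basis vectors and does not single out any index subset. The mechanism the paper actually uses is Theorem \ref{results in LP} (iii) quoted from \cite{LP}: one takes the non-negative lattice generating set $V=\{v_1,\dots,v_{ldim(S)}\}$ of $S$ based at its minimal point, and the exchange matrix of $\A'$ (of rank $ldim(S)$, which can exceed $\dim(S)$) is \emph{constructed} so that a non-negative polytope projection $\tau:N_{f}|_{\A'}\to S$ with $\tilde\tau(e_i)=v_i$ exists; it is not a submatrix of $B$. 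Consequently the ``main obstacle'' you identify --- checking that the restricted matrix is a valid exchange matrix of a sub-seed --- is not the issue at all, and the setup you describe cannot even produce the objects $\A'$ and $f$ for a generic face.

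The second gap is in how you treat the mutation itself. The passage from $N_h$ to $\mu_k(N_h)$ is not a piecewise-linear map of $\R^n$ with a kink along a hyperplane plus a Minkowski summand: it replaces each $k$-section $\overline{pq}$ by a segment $\overline{pq'}$ with $q'=q+\deg_{x_k}(\hat Y^{p}X^{h})e_k$ and transforms the weights along that fiber by a binomial-coefficient rule, and only afterwards applies the translation and the linear map $\phi$. Whether $S$ is preserved is governed by whether it contains a segment parallel to $e_k$ (i.e.\ is ``vertical'' for the fibration $\pi_k$), not by which side of a hyperplane it lies on; and to conclude that $S\cong S'$ as \emph{weighted} polytopes in case (ii) one must check that every point of $S$ is an endpoint of its $k$-section, split into the bottom-endpoint and top-endpoint subcases, and verify that the weights are carried over unchanged, using that $\deg_{x_k}$ is affine in the point. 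Finally, in the subcase of (i) where $\dim(S)=r-1$, the correlated face jumps in dimension, and one must build the rank-$r$ algebra by appending to the exchange matrix of the rank-$(r-1)$ algebra a new row chosen so that the $x_k$-degrees of points of $N_f|_{\A'}$ match those of their images in $N_h$; your proposal asserts this identification but gives no construction of that extra row, which is where the actual work of this case lies.
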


In the skew-symmetrizable case, Peigeng Cao and Fang Li proved in \cite{CL} that two clusters with some common cluster variables lie in the same connected component of the unlabeled exchange graph under freezing the common cluster variables based on a natural embedding of the cluster scattering diagram of a sub-cluster algebra into that of the original cluster algebra given by Muller in \cite{M}. As a weaker version of such embedding, the above theorem in section 3 is sufficient to generalize this result for totally sign-skew-symmetric cluster algebras in section 4.
\begin{Theorem}
  In a TSSS cluster algebra $\A$, if there is a subset $I\subseteq [1,n]$, a permutation $\sigma$ of $[1,n]$ and two clusters $X_{t}$ and $X_{t\p}$ in $\A$ satisfying $x_{i;t}=x_{\sigma(i);t\p}$ for any $i\in I$, then there exists a mutation sequence $\mu$ composed by $\{\mu_{i}|i\notin I\}$ such that $\mu(X_{t})=X_{t\p}$ up to a permutation of indices.
\end{Theorem}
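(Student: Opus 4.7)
The plan is to use Theorem~3.1 to translate the given problem into one about a sub-cluster algebra $\A\p$ of rank $r = n - |I|$, and then to invoke the standard reachability of clusters inside $\A\p$.

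First, by relabeling indices, I may assume $\sigma = \mathrm{id}$, so that $x_i := x_{i;t} = x_{i;t\p}$ for $i \in I$; any permutation can be absorbed into the final matching. The extreme cases give immediate sanity checks: $r = 0$ forces $X_t = X_{t\p}$ as unordered sets, and $r = 1$ leaves a single unfrozen index $k \notin I$, for which the exchange relation identifies $x_{k;t\p}$ with either $x_{k;t}$ or $\mu_k(x_{k;t})$.

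For general $r$, I would proceed as follows. For each $j \notin I$, express $x_{j;t\p}$ as a polytope function $\rho_{h_j}$ in the initial seed $X_t$. The common variables $\{x_i : i \in I\}$ cut out a face $S_j$ of $N_{h_j}$ of codimension $|I|$, obtained by restricting to the subspace orthogonal to $e_i$ for $i \in I$. By Theorem~3.1, for any $k \notin I$, the mutation $\mu_k$ sends $S_j$ to a face $S_j\p$ of $\mu_k(N_{h_j})$, and both are isomorphic to polytopes $N_{f_j}|_{\A\p}$ in a single sub-cluster algebra $\A\p$ of rank $r$. Iterating, every mutation sequence at indices in $[1,n]\setminus I$ in $\A$ corresponds to a mutation sequence in $\A\p$, and $X_{t\p}$ (with the common variables removed) corresponds to a well-defined cluster $X_{t\p}|_{\A\p}$ of $\A\p$. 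Since any cluster of a cluster algebra is reachable from its initial cluster by a mutation sequence, there is a sequence carrying the initial cluster of $\A\p$ to $X_{t\p}|_{\A\p}$; lifting it through the face correspondence produces the desired $\mu$, which consists only of mutations $\mu_k$ with $k \notin I$. The identity $\mu(X_t) = X_{t\p}$ up to permutation then follows since the common variables are untouched while the remaining $r$ entries are matched by the chosen sequence in $\A\p$.

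The main obstacle is to establish the \emph{global coherence} of the face-to-sub-cluster-algebra correspondence: Theorem~3.1 is formulated for a single face of a single polytope under a single mutation, whereas the argument requires the faces $S_j$ arising from all $\rho_{h_j}$ ($j \notin I$) to identify with polytopes in the \emph{same} sub-cluster algebra $\A\p$, with the identification respecting arbitrary mutation sequences at indices outside $I$. Concretely, one must check that the rank-$r$ algebra $\A\p$ and the vectors $f_j$ produced by Theorem~3.1 can be chosen uniformly in $j$, and that iterating mutations at $k \notin I$ composes correctly on both sides. Once this coherence is verified, so that $X_{t\p}$ genuinely descends to a cluster of $\A\p$ rather than an ad hoc collection of polytope functions, the remainder of the proof is the straightforward lifting of a mutation sequence described above.
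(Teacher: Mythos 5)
Your overall strategy --- descending to a sub-cluster algebra of rank $n-|I|$ via the face machinery of Theorem \ref{mutation of faces} and then transporting a mutation sequence back --- is the right one, but the proposal has a genuine gap at its central step, and you have correctly located only half of it. The half you name (coherence of the choice of $\A'$ across the various $j\notin I$) is real, but the paper avoids it entirely by working with a \emph{single} polytope: it chooses one $h$ so that $\rho_h$ is a cluster monomial in $X_{t'}$ with every exponent strictly positive (i.e.\ $h^{t'}\in\Z_{>0}^n$) and so that $N^t_h$ lies in the hyperplanes $z_i=0$ for all $i\in I$; then $\mu(N^t_h)$ is the origin, and at the end Lemma \ref{two cluster monomials equal} converts the single statement that $\tilde\mu(N^t_h)$ is the origin into the equality of whole clusters up to permutation. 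Note also that the relevant geometric fact is not that the common variables ``cut out a face of codimension $|I|$'': by compatibility and Theorem \ref{results in LP} (iv) the entire polytope already lies in $\bigcap_{i\in I}\{z_i=0\}$, which is precisely what makes mutations in directions outside $I$ depend only on the submatrix $B_{[1,n]\setminus I}$ and hence lift and descend correctly.

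The more serious gap is the assertion that $X_{t'}$ ``corresponds to a well-defined cluster $X_{t'}|_{\A'}$ of $\A'$.'' You derive this from the observation that mutations at directions \emph{outside} $I$ correspond to mutations in $\A'$, but the given path from $t$ to $t'$ may use directions \emph{in} $I$ --- that is the whole difficulty of the theorem --- so nothing in your argument produces a mutation path in $\A'$ reaching the putative cluster, and the appeal to ``reachability of clusters'' is circular: a subset of $\A'$ is a cluster precisely because it is reachable by mutation from the initial one. The paper closes exactly this gap by an induction on the length of the given sequence $\mu=\mu_{i_r}\cdots\mu_{i_1}$ with $\mu(X_t)=X_{t'}$: at the first index $s$ with $i_s\in I$ one may choose the correlating face $S_s$ with $\dim(S_s)=\dim(S_{s-1})$, and Theorem \ref{mutation of faces} then forces the induced mutation $\tilde\mu_{i_s}$ to be empty, so the induced sequence in the sub-cluster algebra is strictly shorter; iterating until no direction in $I$ remains, and then lifting the resulting sequence back to $\A$ (legitimate because the polytope stays in the hyperplanes $z_i=0$ for $i\in I$ throughout), yields the desired $\tilde\mu$. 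Without this length-reduction mechanism your plan cannot get started, since you never explain how to produce any mutation sequence in $\A'$ whose endpoint is controlled by $X_{t'}$.
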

Then we are able to generalize the compatibility degree introduced by Peigeng Cao and Fang Li in \cite{CL} which measures whether there is a cluster containing these two cluster variables.
\begin{Theorem}
  In a TSSS cluster algebra $\A$, for any $f\in\mathcal{U(A)}$ and two different clusters $X_t$ and $X_{t\p}$ both containing a cluster variable $x$, $d_{x}^t(f)=d_{x}^{t\p}(f)$. Hence it is rational to define $(f|x)=d_{x}^t(f)$ for any $f\in\mathcal{U(A)}$ and $x\in\mathcal{S}$.

  For any cluster variables $x$ and $x\p$, $(x\p|x)\geqslant-1$. Moreover, $(x\p|x)=-1$ if and only if $x\p=x$ while $(x\p|x)=0$ if and only if $x\p\neq x$ and there is a cluster containing both variables.
\end{Theorem}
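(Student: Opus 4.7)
The statement splits into three claims: well-definedness of $(f|x)$, the bound $(x'|x)\geq -1$ with characterization of equality, and the characterization of $(x'|x)=0$ for $x\neq x'$.

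For well-definedness, my plan is to apply the freezing theorem just stated to the singleton $I=\{i_x\}$, where $i_x$ is the position of $x$ in $X_t$. This produces a mutation sequence from $X_t$ to $X_{t\p}$ composed only of $\mu_k$ with $k\neq i_x$, so it suffices to verify that $d_x^t(f)$ is preserved under any single such mutation. I expect this to follow by a direct exchange-relation computation: $\mu_k$ writes $x_{k;t}=(M_1+M_2)/x_{k;t\p}$ where $M_1,M_2$ are monomials in $\{x_{j;t}:j\neq k\}$, so substituting into any Laurent expansion of $f$ in $X_t$-variables preserves the exponent of $x_{i_x;t}=x_{i_x;t\p}$ as a denominator factor.

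For $(x'|x)\geq -1$ and the characterization of equality, if $x'=x$ then writing $x$ as itself in a cluster containing it is a monomial and yields $(x|x)=-1$; conversely, if $d_x^t(x')=-1$ for some cluster $X_t$ with $x=x_{i;t}$, I would argue via the Newton polytope that $N_{x'}$ must have a vertex with $i$-th coordinate $-1$, which combined with the support and positivity constraints on polytope functions in \cite{LP} forces $N_{x'}$ to reduce to the single vertex corresponding to $x_{i;t}$ and hence $x'=x$. The inequality $(x'|x)\geq -1$ for general $x'\neq x$ then reduces to non-negativity of $d$-vector components of non-initial cluster variables, which in the TSSS setting is obtained from the polytope construction.

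For the characterization of $(x'|x)=0$, the direction from a common cluster is immediate since a cluster variable has monomial Laurent expansion in its own cluster. The converse is where I expect the main work. Given $d_x^t(x')=0$, the Newton polytope $N_{x'}$ (in $X_t$-coordinates) sits in the closed half-space $\{v_{i_x}\geq 0\}$ and meets the bounding hyperplane $\{v_{i_x}=0\}$ in a nonempty face $S$. The plan is to invoke Theorem 3.1 with $k=i_x$: the face $S$ corresponds to a polytope function in a sub-cluster algebra $\A\p$ of rank $n-1$ (the one obtained by ``freezing'' $x$), inside which $x'$ appears as a cluster variable. A cluster of $\A\p$ containing this variable, lifted by re-adjoining $x$, then produces a cluster of $\A$ containing both $x$ and $x'$. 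The hard part will be making the lifting step rigorous — in particular, ensuring that the face-to-sub-cluster-algebra correspondence of Theorem 3.1 actually upgrades a cluster of $\A\p$, together with $x$, to an honest cluster of $\A$; this is where the interplay with the freezing theorem in the preceding section must be exploited carefully.
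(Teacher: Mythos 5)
Your treatment of the first two claims matches the paper's: well-definedness is proved exactly as you propose (freeze $I=\{l\}$ via the preceding theorem, then check a single mutation $\mu_k$, $k\neq l$, by the exchange relation), and the inequality $(x'|x)\geqslant-1$ with its equality case is, in the paper, simply the quoted fact from \cite{LP} that $d_k(\rho_h)\geqslant-1$ for a cluster variable with equality iff $\rho_h=x_k$ (your polytope re-derivation is just an unwinding of that citation).

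The genuine gap is in the converse of the $(x'|x)=0$ characterization, and it sits exactly where you flagged it. Two problems. First, you only extract from $d_x^t(x')=0$ that a Newton polytope meets a bounding hyperplane in a nonempty face $S$; but if $S$ is a proper face, Theorem 3.1 associates to $S$ a polytope function of the rank-$(n-1)$ algebra $\mathcal{A}'$ that is \emph{not} $x'$ (it is only a face of $N_{x'}$), so "$x'$ appears as a cluster variable of $\mathcal{A}'$" does not follow. The paper instead works with the Newton polytope $N^t_{g^t_{x'}}$ of the $F$-polynomial (in the $Y$-exponent coordinates, not the $X$-coordinates you use) and deduces from $d_x^t(x')=0$ together with Theorem 3.1(iv) of the preliminaries that this \emph{entire} polytope lies in the hyperplane $z_l=0$; only then does the face/sub-algebra correspondence apply to all of $N_{x'}$. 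Second, the lifting step as you state it — take a cluster of $\mathcal{A}'$ containing the relevant variable and re-adjoin $x$ — is not something Theorem 3.1 provides, and it is not how the paper closes the argument. The paper never lifts a cluster; it lifts a \emph{mutation sequence}: rerunning the proof of the freezing theorem, it replaces the mutation sequence $\mu$ taking $\Sigma_t$ to a cluster containing $x'$ by a sequence $\tilde{\mu}$ avoiding direction $l$ which still sends $N^t_{g^t_{x'}}$ to the origin. Applying this same $\tilde{\mu}$ inside $\mathcal{A}$ produces an honest cluster $\tilde{\mu}(X_t)$ of $\mathcal{A}$: it contains $x'$ because the mutated polytope is the origin (Lemma 4.1 on equality of cluster monomials), and it contains $x$ because direction $l$ is never mutated. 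Reorganizing your argument around transporting the mutation sequence rather than the cluster is the missing idea; without it the "re-adjoining $x$" step has no justification.
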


In Section 5, we find such compatibility agrees with the equality $\rho_h\rho_{h\p}=\rho_{h+h\p}$ for polytope functions, and the latter condition does not rely on any cluster anymore.
\begin{Proposition}
  In a cluster algebra $\A$, let $g$ be a $g$-vector associated to some cluster variable and $h\in\Z^n$. Then $\rho_g\rho_{h}=\rho_{g+h}$ if and only if $(\rho_{h}|\rho_g)\leqslant0$.
\end{Proposition}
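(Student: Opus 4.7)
The strategy is to pass to a seed in which $\rho_g$ is a cluster variable and then analyse the product at the level of Newton polytopes. Since $g$ is the $g$-vector of a cluster variable, the mutation behaviour of polytope functions from \cite{LP}, together with the face-correspondence theorem of Section~3, lets me mutate to a seed $t$ in which $\rho_g=x_{k;t}$ for some $k\in[1,n]$; polytope functions stay polytope functions under mutation, and by the seed-invariance of compatibility degree established above, $(\rho_h|\rho_g)=d_{x_{k;t}}^{t}(\rho_h)$. The identity $\rho_g\rho_h=\rho_{g+h}$ is a seed-free equality of Laurent polynomials, which in seed $t$ reads $x_{k;t}\rho_h=\rho_{g+h}$. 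Writing $N_h^{(t)}$ for the Newton polytope of $\rho_h$ expanded in $X_t$, the hypothesis $(\rho_h|\rho_g)\le 0$ is equivalent to $N_h^{(t)}\subseteq\{v_k\ge 0\}$, while multiplication by $x_{k;t}$ translates $N_h^{(t)}$ by $e_k$ with weights preserved. The proposition therefore reduces to the weighted polytope equivalence
\[
N_{g+h}^{(t)}=N_h^{(t)}+e_k \iff N_h^{(t)}\subseteq\{v_k\ge 0\}.
\]

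\textbf{The $(\Leftarrow)$ direction.} Assume $N_h^{(t)}\subseteq\{v_k\ge 0\}$, so $N_h^{(t)}+e_k$ lies in $\{v_k\ge 1\}$ with top vertex $\hat h+e_k$, where $\hat h$ is the $g$-vector of $\rho_h$ in seed $t$. Each facet of $N_h^{(t)}$ corresponds, via the face-correspondence theorem of Section~3, to a proper sub-cluster algebra $\A\p$ realising that facet as some $N_f|_{\A\p}$. Because $N_h^{(t)}$ avoids $\{v_k<0\}$, every facet falls into one of the two sub-cases of that theorem: it is either transverse to $e_k$ and lies at or above $\{v_k=0\}$, or it contains a segment parallel to $e_k$; in both sub-cases translation by $e_k$ preserves the realisation $N_f|_{\A\p}$ and the incidence data between facets. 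Collating the translated face data and invoking the uniqueness of polytope functions from \cite{LP} identifies $N_h^{(t)}+e_k$ with $N_{g+h}^{(t)}$ as weighted polytopes, so $\rho_g\rho_h=\rho_{g+h}$.

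\textbf{The $(\Rightarrow)$ direction.} Conversely, assume $\rho_g\rho_h=\rho_{g+h}$. By the positivity of polytope functions from \cite{LP} (which forbids cancellation of extreme $x_{k;t}$-powers), one obtains $d_{x_{k;t}}^{t}(\rho_{g+h})=d_{x_{k;t}}^{t}(\rho_g)+d_{x_{k;t}}^{t}(\rho_h)=-1+(\rho_h|\rho_g)$, so it suffices to show $d_{x_{k;t}}^{t}(\rho_{g+h})\le -1$. Suppose for contradiction that $(\rho_h|\rho_g)>0$. Then $N_{g+h}^{(t)}$ contains a lattice point with $v_k\le 0$, and the equality $N_{g+h}^{(t)}=N_h^{(t)}+e_k$ transports the minimising face $F$ of $N_{g+h}^{(t)}$ in the $v_k$-direction to the parallel face $F-e_k$ of $N_h^{(t)}$. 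Applying the face-correspondence theorem of Section~3 simultaneously to both forces contradictory roles for the variable $x_{k;t}$ in the associated sub-cluster algebras (frozen on $F$ where $v_k$ is minimised, but a genuine mutation direction on the inherited structure of $F-e_k$ after translation), ruling out $(\rho_h|\rho_g)>0$.

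\textbf{The main obstacle} is the synchronisation of the face-correspondence theorem of Section~3 on the two weighted polytopes. The $(\Leftarrow)$ direction requires verifying that translation by $e_k$ preserves both sub-cases of that theorem, so that the recipe constructing $N_{g+h}^{(t)}$ reproduces exactly the translated face data of $N_h^{(t)}$. The $(\Rightarrow)$ direction requires exploiting the intrinsic rigidity of the polytope function $\rho_{g+h}$ together with the face-level data of Section~3's theorem to rule out Newton polytopes descending below $\{v_k=1\}$. Both directions therefore rest on a careful, case-by-case analysis of how the two sub-cases of the face-correspondence theorem interact with translation in the $e_k$-direction.
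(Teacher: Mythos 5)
Your overall strategy --- reduce to the case where $\rho_g$ is a cluster variable of the current seed and then compare Newton polytopes --- is the same opening move as the paper's, but in both directions you replace the decisive mechanism with an assertion, and in both places this is a genuine gap. In the ``if'' direction, the step ``collating the translated face data and invoking the uniqueness of polytope functions'' is not an argument: there is no available statement that a polytope function is determined by the face data of its Newton polytope, and Theorem \ref{mutation of faces} only describes how individual faces transform under mutation, not how a global weighted polytope is reassembled from its facets. The paper's actual mechanism is universal indecomposability: $(\rho_h|\rho_g)\leqslant 0$ forces $N_h$ (the Newton polytope in the $\hat Y$-exponents, which is the object Theorem \ref{results in LP} speaks about) to coincide with its $[1,n]\setminus\{k\}$-section at the origin, whence $\rho_g\rho_h$ is still universally indecomposable; combined with the facts from \cite{LP} that $\rho_{g+h}$ is universally indecomposable and is a summand of $\rho_g\rho_h$, equality follows. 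Your argument never touches indecomposability or the summand property, and without them the identification of $x_{k;t}\rho_h$ with $\rho_{g+h}$ is unproved.

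In the ``only if'' direction, the contradiction you invoke --- that $x_{k;t}$ would be ``frozen'' on the minimising face $F$ of $N_{g+h}^{(t)}$ but a ``genuine mutation direction'' on $F-e_k$ --- does not exist: the two faces are translates of one another, and the sub-cluster algebra attached to a face by Theorem \ref{results in LP} (iii) and Theorem \ref{mutation of faces} depends only on the lattice generating set based at the minimal point, which is translation-invariant. The paper's obstruction is different and concrete: if $(\rho_h|\rho_g)>0$, the ($\hat Y$-exponent) Newton polytope of $\rho_g\rho_h$ is $N_g\oplus N_h=N_h$, whose maximal edge length parallel to $e_k$ is $d_k(\rho_h)$, while the $d$-vector entry of the product is $d_k(\rho_h)-1$; this violates the characterisation $[d_k(-)]_+=\text{(maximal $e_k$-parallel edge length)}$ of Theorem \ref{results in LP} (iv), so $\rho_g\rho_h$ cannot be a polytope function at all, in particular not $\rho_{g+h}$. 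A further caution: you work throughout with Newton polytopes in the $X_t$-exponents, whereas $N_h$ and all the cited structural results live in the $\hat Y$-exponents; the passage between the two is $p\mapsto h+B_tp$, which can collapse dimensions when $B_t$ is singular, so the face-correspondence results cannot be transported to your $N_h^{(t)}$ without further justification.
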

Therefore, we generalize the notion of compatibility and hence generalize a cluster as a maximal set of pairwise compatible self-compatible irreducible polytope functions, which induces a fan $\mathcal{C}$ as a generalization of the cluster complex. In particular, this explains the unistructurality of cluster algebras when restricted in the subset consisting of cluster variables.

In Section 6, we confirm a conjecture proposed by Siyang Liu in \cite{L} which suggests corresponding $C$-matrices associated to $B$ and $-B^\top$ respectively are sign-synchronic. Then with the help of this conjecture and the mutation of polytope functions, we generalize several amazing equations about $G$-matrices and $C$-matrices for totally sign-skew-symmetric cluster algebras given by Nakanishi and Zelevinsky under an assumption of column sign-coherence of $c$-vectors in \cite{NZ}, which present the duality of $G$-matrices and $C$-matrices and connect cluster algebras associated to $B$, $B^\top$ and $-B^\top$ respectively. Recently, Siyang Liu also showed in \cite{L} their philosophy can be combined with folding theory to deal with acyclic sign-skew-symmetric cluster algebras.
\begin{Theorem}
  For any $t,t\p\in\T_{n}$, the following equations hold

  (i)\;$(G^{B_{t\p};t\p}_{t})^{\top}=C^{B_{t}^{\top};t}_{t\p}$.

  (ii)\;$G^{B_t;t}_{t\p}G^{-B_{t\p};t\p}_t=I_n$.

  (iii)\;$G^{B_t;t}_{t\p}(C^{-B_{t}^{\top};t}_{t\p})^{\top}=I_n$ and $C^{B_t;t}_{t\p}C^{-B_{t\p};t\p}_t=I_n$.
\end{Theorem}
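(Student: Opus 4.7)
The plan is to establish the four identities by simultaneous induction on the distance $d(t,t\p)$ in $\T_n$, using the polytope realization of $G$- and $C$-matrices developed earlier in this section together with Liu's sign-synchronicity conjecture confirmed at its beginning. The base case $t = t\p$ is immediate since every matrix reduces to $I_n$. In the inductive step, I would fix a mutation direction $k$ and compare how the two sides of each identity transform when $t\p$ is replaced by $\mu_k(t\p)$.

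For identity (i), the first move is to translate both sides into polytope language. Each column of $G^{B_{t\p};t\p}_t$ is the $g$-vector in $t\p$-coordinates of a cluster variable at $t$, and by the polytope realization it can be read off as a distinguished vertex of the associated Newton polytope. Each row of $C^{B_t^{\top};t}_{t\p}$ is the corresponding $c$-vector in the cluster algebra with initial exchange matrix $B_t^{\top}$, realized via the analogous polytope description. The inductive step then amounts to matching the mutation formulas for $G$- and $C$-matrices: the sign used in the $G$-mutation on the left is guaranteed by sign-coherence of $c$-vectors for $B_t$, while the sign used in the $C$-mutation on the right requires sign-coherence of $c$-vectors for $B_t^{\top}$. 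Sign-synchronicity between $B$ and $-B^{\top}$ (Liu's conjecture) is exactly what identifies these two sign choices and closes the induction.

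Identity (ii) is handled by a parallel induction in which both factors are $G$-matrices, one with initial exchange matrix $B_t$ and the other with initial exchange matrix $-B_{t\p}$. The product $G^{B_t;t}_{t\p} G^{-B_{t\p};t\p}_t$ is trivially $I_n$ when $t=t\p$, and I would verify by direct calculation with the $G$-mutation rule that mutating in direction $k$ preserves this equality, again invoking Liu's conjecture to align signs between the two factors. Once (i) and (ii) are in hand, both equations in (iii) are formal consequences. The first, $G^{B_t;t}_{t\p}(C^{-B_t^{\top};t}_{t\p})^{\top}=I_n$, follows by applying (i) under the substitution $B \mapsto -B$ to rewrite $(C^{-B_t^{\top};t}_{t\p})^{\top}$ as $G^{-B_{t\p};t\p}_t$ and then invoking (ii). The second, $C^{B_t;t}_{t\p} C^{-B_{t\p};t\p}_t = I_n$, follows by applying (i) under $B \mapsto B^{\top}$ and under $(t,B) \mapsto (t\p,-B^{\top})$ to express both $C$-matrices as transposes of $G$-matrices, and then invoking (ii) under $B \mapsto -B^{\top}$.

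The main obstacle will be controlling the propagation of sign-coherence data across the four cluster algebras associated to $B$, $-B$, $B^{\top}$, and $-B^{\top}$ simultaneously during each inductive step. In the skew-symmetrizable setting of Nakanishi and Zelevinsky this was essentially free from the sign-coherence theorems of \cite{GHKK}; here only the polytope realization and Liu's conjecture are available, so ensuring that each sign-coherence assumption is invoked in the correct frame is delicate. The careful bookkeeping required, together with verifying that the polytope constructions of Section 6 behave well under $B \mapsto B^{\top}$ and $B \mapsto -B$ so that the polytope realization of $C$-matrices for $B_t^{\top}$ can legitimately be compared with that for $B_t$, will be the most technical part of the argument.
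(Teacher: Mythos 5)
Your overall architecture — induction on the length of the path from $t$ to $t\p$, with both identities in (iii) extracted from (i) and (ii) via the substitutions $B\mapsto-B$, $B\mapsto B^{\top}$ and an exchange of the roles of $t$ and $t\p$ — matches the paper, and your derivation of (iii) is exactly the intended one. However, the mechanism you propose for (i) is not the right one. When $t\p$ is replaced by $t\p_k$, the right-hand side $C^{B_{t}^{\top};t}_{t\p}$ changes by the ordinary extended-exchange-matrix mutation rule (\ref{equation: mutation of B}), which involves no sign choice, while the left-hand side $G^{B_{t\p};t\p}_{t}$ undergoes an \emph{initial-seed} mutation, whose recurrence is supplied unconditionally by Theorem \ref{results in LP}(i) (the transformation $h\mapsto h^{t_k}$ of degrees of polytope functions) rather than by the recurrence (\ref{equation: mutation of g-vectors}), which governs mutation of the non-initial vertex and is where the $\epsilon$-ambiguity lives. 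These two recurrences coincide entrywise under transposition, so the induction for (i) closes with no sign-coherence or sign-synchronicity input at all. Routing (i) through Liu's conjecture is not merely unnecessary but hazardous: in the paper, Proposition \ref{sign-synchronicity of c-vectors for B and -B^T} is itself only established in Section 7 by a simultaneous induction that takes part (i) as an already-proven ingredient, so making (i) depend on sign-synchronicity threatens to render the whole scheme circular. (Relatedly, $g$-vectors are realized here as outer normals of facets, not as distinguished vertices of Newton polytopes.)

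For (ii), your ``direct calculation with the $G$-mutation rule'' is missing the ingredient that actually makes the induction step close, namely $G^{B_t;t}_{t\p}B_{t\p}=B_tC^{B_t;t}_{t\p}$ (Theorem \ref{GB=BC}). In the computation of $\sum_i g\p_i\bar g\p_i$ one is left with the cross terms $-\sum_{i}g_ib^{t\p}_{ik}[-\bar g_k]_+ +\sum_{j}b^{t}_{rj}[-c^{t\p}_{jk}]_+\bar g_k$; part (i) together with sign-synchronicity and column sign-coherence only yields $\bar g_kc^{t\p}_{jk}\geqslant0$, which reduces this remainder to $\bar g_k\bigl(\sum_ig_ib^{t\p}_{ik}-\sum_jb^{t}_{rj}c^{t\p}_{jk}\bigr)$ in the nontrivial case, and it is precisely Theorem \ref{GB=BC} that kills it. Without that identity the signs alone do not suffice, so you should add it explicitly to your list of inputs for the inductive step of (ii).
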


In Section 7, with the help of these results, we are able to realize $C$-matrices and $G$-matrices in the Newton polytopes of polytope functions as a collection of sets consisting of primitive vectors of certain edges or normals, which allows us to construct a complete fan $\mathcal{N}$ containing all cones in the $g$-fan for a given totally sign-skew-symmetric matrix analogue to the wall-chamber structure of the cluster scattering diagram for a skew-symmetrizable matrix.
\begin{Theorem}
  Let $\A$ be a TSSS cluster algebra and $\mu=\mu_{i_r}\cdots\mu_{i_1}$ be a mutation sequence from $\Sigma_{t_0}$ to $\Sigma_t$.

  (i)\;$C_{t_0}^{\epsilon B_t;t}=\overrightarrow{\mu(L^\epsilon)}$ for $\epsilon\in\{+,-\}$.

  (ii)\;$G_{t_0}^{-\epsilon B_t^\top;t}=\epsilon\overrightarrow{\mu(L^\epsilon)}^\bot$ for $\epsilon\in\{+,-\}$ when $n\geqslant2$.

  (iii)\;$\mu^\epsilon_{j_s}\neq\emptyset$ if and only if $i_s\notin \bigcup\limits_{i\in [1,n]\setminus I}supp(g_{i;t_0}^{-\epsilon B_{t(s-1)}^\top;t(s-1)})$ for any $s\in[1,r]$. In this case, there is a unique index $j\in I$ satisfying
  $$\pi_{[1,n]\setminus I}((G_{t_0}^{-\epsilon B_{t(s-1)}^\top;t(s-1)})^\top[i_s])=(G_{t_0}^{-\epsilon (B_{[1,n]\setminus I})_{t\p(s-1)}^\top;t\p(s-1)})^\top[j].$$

  (iv)\;There is a class of bijections $\{\varphi_s\}_{s\in[0,r]}$ between face complexes of polytopes
  \[\varphi_s:\quad\mu_{i_{s}}\cdots\mu_{i_1}(N)|_{\A(B)}\quad\overset{\cong}\longrightarrow\quad \mu_{i_{s}}\cdots\mu_{i_1}(N)|_{\A(-B^\top)}\]
  satisfying some properties so that the mutation behaviors of $S$ and $\varphi_s(S)$ are similar.
\end{Theorem}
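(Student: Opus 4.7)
The plan is to prove parts (i)--(iv) by a simultaneous induction on the length $r$ of the mutation sequence $\mu=\mu_{i_r}\cdots\mu_{i_1}$, since the four statements are tightly intertwined: (ii) is the dual counterpart of (i), (iv) is the polytope-level bridge between the two cluster algebras $\A(B)$ and $\A(-B^\top)$, and (iii) extracts a numerical criterion from (iv). For the base case $r=0$, the initial polytope $N$ has exactly $2n$ edges at the origin along $\pm e_i$; the signed labelling by $L^\pm$ gives $\overrightarrow{L^{\pm}}=\pm I_n=C_{t_0}^{\pm B;t_0}$, the outward normals at the opposite vertex recover $\mp I_n=G_{t_0}^{\mp B^\top;t_0}$, $\varphi_0$ is the identity, and (iii) is vacuous.

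For the inductive step, assume the four assertions hold at $t(s-1)$. First, the face-correspondence theorem of Section 3 prescribes, for every face of the polytope, its behaviour under $\mu_{i_s}$; restricted to $1$-dimensional faces it yields a combinatorial mutation rule on the set $\overrightarrow{\mu(L^{\epsilon})}$. Using the explicit exchange formula for $\rho_h$ from \cite{LP}, I would verify that this rule coincides precisely with the Fomin--Zelevinsky mutation rule for $c$-vectors, establishing (i) at step $s$. The identity $(G^{B_{t\p};t\p}_t)^{\top}=C^{B_t^\top;t}_{t\p}$ from the duality theorem of Section 6 then converts (i) on $\A(-B^\top)$ into (ii) on $\A(B)$; consistency of the sign $\epsilon$ between the two cluster algebras is guaranteed by the sign-synchronicity confirmed from Liu's conjecture.

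For (iii), the face-correspondence theorem dichotomises faces into those containing a segment parallel to $e_{i_s}$ and those that do not; edges in the former class correspond to $c$-vectors that are genuinely mutated (the non-empty case), while edges in the latter class correspond to $c$-vectors supported away from $\{i_s\}$ (the empty case). Part (ii) translates this into the support condition on the $g$-vectors stated in (iii), and the sub-cluster algebra interpretation from Section 3 pinpoints both the index $j\in I$ and the projection identity. For (iv), the bijection $\varphi_s$ is defined inductively: for a face $S$ of $\mu_{i_s}\cdots\mu_{i_1}(N)|_{\A(B)}$ arising from a face $\tilde S$ of $\mu_{i_{s-1}}\cdots\mu_{i_1}(N)|_{\A(B)}$ under Section 3, set $\varphi_s(S)$ to be the image of $\varphi_{s-1}(\tilde S)$ under the analogous correspondence on the $-B^\top$ side. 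Commutativity with mutation is then automatic, and the required similarity of mutation behaviours of $S$ and $\varphi_s(S)$ is inherited from the face-correspondence theorem applied on both sides.

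The main obstacle is the consistent tracking of signs and of which edges or normals persist, die, or are created at each step: the face-correspondence theorem branches into several sub-cases depending on the dimension of $S$ and its relation to $e_{i_s}$, and the verification that every new edge appearing on the $B$-side is matched by a new normal on the $-B^\top$-side requires the duality identities of Section 6 applied simultaneously. I expect this combinatorial-sign analysis to reduce, via the polytope-level degeneration to a sub-cluster algebra, to a rank-$2$ check, where totally sign-skew-symmetric and skew-symmetrizable coincide and the claim is already known.
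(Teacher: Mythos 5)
Your overall strategy (simultaneous induction on the length of $\mu$, matching the edge-mutation rule from the face-correspondence theorem of Section 3 with the $c$-vector recurrence, converting (i) into (ii) via the duality $(G^{B_{t\p};t\p}_{t})^{\top}=C^{B_{t}^{\top};t}_{t\p}$, and defining $\varphi_s$ inductively) is the same skeleton as the paper's proof. However, there is a genuine circularity in the way you invoke the sign-synchronicity statement. You write that ``consistency of the sign $\epsilon$ between the two cluster algebras is guaranteed by the sign-synchronicity confirmed from Liu's conjecture,'' but in this paper that statement (Proposition \ref{sign-synchronicity of c-vectors for B and -B^T}) is \emph{not} an available external input for general TSSS matrices: Liu proved it only in the acyclic case, and the paper explicitly defers its proof to Section 7, where it is established \emph{simultaneously} with Theorem \ref{c-vectors in N} and with Theorem \ref{relation between G-matrix and C-matrix} (ii)--(iii) in a single intertwined induction. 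Concretely, to prove (i) at step $r$ you must decide whether $\mu\p(l^\epsilon_j)$ is a top or a bottom edge with respect to $i_r$ and match that dichotomy with the sign $\varepsilon=-sign(c_{i_r;t\p})$ in the Nakanishi--Zelevinsky form of the $c$-vector recurrence; this matching is exactly where sign-synchronicity at $t\p$ enters, and sign-synchronicity at $t\p$ is in turn deduced (in step (e) of the paper's proof) from parts (iii) and (iv) of this very theorem at earlier vertices. Your proposal as written uses the conclusion to prove the hypothesis.

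The second structural ingredient you are missing is the induction on the rank $n$. The paper's proof of sign-synchronicity at a rank-$n$ vertex $t$ freezes each index $j$ in turn, uses (iii) and (iv) to transport the mutation sequence to the rank-$(n-1)$ sub-cluster algebras $\A(B_j)$ and $\A(-B_j^\top)$, and then invokes sign-synchronicity for rank $n-1$ (with the base handled by the fact that rank-$2$ matrices are skew-symmetrizable, and the argument requiring $n\geqslant3$ to compare the $j$-components across different choices of $j$). Your closing remark that the analysis should ``reduce to a rank-$2$ check'' gestures at this but is not the same thing: a rank-$2$ reduction alone does not produce sign-synchronicity of full $n\times n$ $G$-matrices; one genuinely needs the rank-$(n-1)$ statement, hence a double induction on $(n,r)$ with all five assertions --- Theorem \ref{c-vectors in N} (i)--(iv), Theorem \ref{relation between G-matrix and C-matrix} (ii)--(iii), and Proposition \ref{sign-synchronicity of c-vectors for B and -B^T} --- carried along together in the order (a)--(e) laid out in the paper. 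Once the induction is restructured this way, the individual verifications you describe are essentially the right ones.
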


\section{Preliminaries}

\subsection{Notations and notions}
In this paper, unless otherwise specified, we always fix the following notations and notions for convenience:

(i)\;For any $n\in\N$, $x\in\Z$,
\[[1,n]=\{1,2,\cdots,n\},\;\;\;\;\;\;\;
sign(x)=\left\{\begin{array}{cc}
                  0, & \text{if } x=0; \\
                  \frac{|x|}{x}, & \text{otherwise}.
                \end{array}\right.\;\;\;\;\text{and}\;\;\;\;\;
[x]_{+}=max\{x,0\}.\]
For a vector $\alpha=(\alpha_{1},\cdots,\alpha_{r})^\top\in\Z^{r}$, $[\alpha]_{+}=([\alpha_{1}]_{+},\cdots,[\alpha_{r}]_{+})^\top$.

(ii)\;We denote elements in $\R^n$ as column vectors and denote by $z_{1},\cdots,z_{n}$ the coordinates of $\R^{n}$. Let $\{e_{i}|i\in[1,n]\}$ be the standard basis of $\R^n$. In this paper, we will not add extra subscript $n$ to claim the basis is of $\R^n$ as we believe there is no risk of confusion.

(iii)\;Given a set of commutative variables $X=\{x_{1},\cdots,x_{n}\}$ and a vector $\alpha=(\alpha_{1},\cdots,\alpha_{n})^\top\in\R^{n}$, denote $X^{\alpha}:=\prod\limits_{i=1}^{n}x_{i}^{\alpha_{i}}$.

(iv)\;For a finite set $I$, denote by $|I|$ the number of elements in $I$.

(v)\;For any subset $I\subset[1,n]$, denote the canonical projection as
\[\pi_{I}:\quad\R^{n}\quad\longrightarrow\quad\R^{n-|I|}\quad\quad\quad\qquad\]
\[\alpha\quad\mapsto\quad (\alpha_{i})_{i\notin I}\;\;\;\;\;\]
and denote the canonical embedding as
\[\gamma_{I;v}:\quad\R^{n-|I|}\quad\longrightarrow\quad\R^{n}\quad\quad\quad\qquad\]
\[\alpha\quad\mapsto\quad \gamma_{I;v}(\alpha)\;\;\;\;\;\]
such that $\pi_I\circ\gamma_{I;v}(\alpha)=v$ while $\pi_{[1,n]\setminus I}\circ\gamma_{I;v}(\alpha)=\alpha$ for any $v\in\R^I$. When $I$ contains only one index $j$, we usually use $\gamma_{j;v}$ rather than $\gamma_{\{j\};v}$ for simplicity.

(vi) {\bf Points} imply lattice points in $\Z^{n}$ and {\bf Polytopes} imply those whose vertices are lattice points.

(vii)\;The {\bf partial order} ``$\leqslant$" in $\Z^{n}$ is defined as follows: for any $\alpha=(\alpha_1, \cdots, \alpha_n)^\top$ and $\beta=(\beta_1, \cdots, \beta_n)^\top$ in $\Z^{n}$, $\alpha\leqslant \beta$ if $\alpha_{i}\leqslant \beta_{i}$ for all $i\in[1,n]$.

\subsection{Cluster algebras}
In this subsection, we recall some preliminaries of cluster algebras mainly based on \cite{FZ4}.

An integer matrix $B=(b_{ij})_{n\times n}$ is called {\bf sign-skew-symmetric} if either $b_{ij}=b_{ji}=0$ or $b_{ij}b_{ji}<0$ for any $i,j\in[1,n]$. A \textbf{skew-symmetric} matrix is a sign-skew-symmetric matrix with $b_{ij}=-b_{ji}$ for any $i,j\in[1,n]$. A \textbf{skew-symmetrizable} matrix is a sign-skew-symmetric matrix such that there is a positive diagonal matrix $D$ satisfying that $DB$ is skew-symmetric.

For a sign-skew-symmetric matrix $B$ and any $k\in[1,n]$, we define another matrix $B'=(b_{ij}')_{n\times n}$ satisfying that for any $i,j\in[1,n]$,
  \begin{equation}\label{equation: mutation of B}
    b_{ij}'=\left\{\begin{array}{ll}
                             -b_{ij}, & \text{if } i=k\text{ or }j=k; \\
                             b_{ij}+sign(b_{ik})[b_{ik}b_{kj}]_{+}, & \text{otherwise}.
                           \end{array}\right.
  \end{equation}
Denote by $B'=\mu_k(B)$ the matrix obtained from $B$ by the \textbf{mutation} in direction $k$. If $B'$ is also sign-skew-symmetric, then we can mutate $B'$ in direction $k\p\in[1,n]$ to obtain $B''=\mu_{k\p}\mu_{k}(B)$. This goes on until we reach a non-sign-skew-symmetric matrix.

\begin{Definition}
  A sign-skew-symmetric matrix $B$ is called {\bf totally sign-skew-symmetric} if $\mu(B)$ is sign-skew-symmetric for any mutation sequence $\mu=\mu_{k_i}\cdots\mu_{k_1}$.
\end{Definition}

It is not hard to check that skew-symmetry (or skew-symmetrizability respectively) of a matrix is mutation invariant, so such matrices are totally sign-skew-symmetric. Therefore, we will call a cluster algebra skew-symmetric (or skew-symmetrizable respectively) if so are its exchange matrices.

It is conjectured in \cite{BFZ} that an acyclic sign-skew-symmetric matrix is totally sign-skew-symmetric. Later in \cite{HL}, Ming Huang and Fang Li proved this conjecture which provides a lot of totally sign-skew-symmetric matrices which are not skew-symmetrizable. Meanwhile, it is also confirmed in \cite{FZ1} the existence of mutation-acyclic totally sign-skew-symmetric matrices.

For convenience, we will denote a totally sign-skew-symmetric matrix (respectively, cluster algebra defined subsequently) briefly by a {\bf TSSS matrix} (respectively, {\bf TSSS cluster algebra}).

Let $(\P,\oplus,\cdot)$ be a semifield, that is, a free abelian multiplicative group endowed with a binary operation of (auxiliary) addition $\oplus$ which is commutative, associative and distributive with respect to the multiplication in $\P$, and $\F$ be the field of rational functions in $n\in\Z_{>0}$ independent variables with coefficients in $\Q\P$.
\begin{Definition}
  A \textbf{seed} in $\F$ is a triple $\Sigma=(X,Y,B)$ such that
  \begin{itemize}
    \item $X=(x_{1},x_{2},\cdots,x_{n})$ is an $n$-tuple whose components form a free generating set of $\F$;
    \item $Y=(y_{1},y_{2},\cdots,y_{n})$ is an $n$-tuple of elements in $\P$;
    \item $B$ is an $n\times n$ TSSS integer matrix.
  \end{itemize}
\end{Definition}
The above $X$ is called a \textbf{cluster} with \textbf{cluster variables} $x_{i}$, $y_{i}$ is called a \textbf{$Y$-variable} and $B$ is called an \textbf{exchange matrix}.

Sometimes when the order of cluster variables in a cluster matters, labeled seeds will be used instead of seeds by distinguishing two seeds up to a permutation of indices. In this paper, the order is not essential. But since we mainly focus on mutations here, in order to avoid confusions, we would like to fix orders of cluster variables. Unless otherwise specified, we mean labeled seeds when we say seeds.

\begin{Definition}
  For any seed $\Sigma=(X,Y,B)$ in $\F$ and $k\in[1,n]$, we say $\Sigma\p=(X\p,Y\p,B\p)$ is obtained from $\Sigma$ by the \textbf{mutation} in direction $k$ if $X\p=(x_1\p,\cdots,x_n\p)$, $Y\p=(y_1\p,\cdots,y_n\p)$ and $B\p$ satisfying
  \begin{equation}\label{equation: mutation of x}
            x_{j}\p=\left\{\begin{array}{ll}
                             \frac{y_{k}\prod\limits_{i=1}^{n}x_{i}^{[b_{ik}]_{+}}+\prod\limits_{i=1}^{n}x_{i}^{[-b_{ik}]_{+}}}{(y_{k}\oplus 1)x_{k}}, & \text{if } j=k;\\
                             x_{j}, & \text{otherwise}.
                           \end{array}\right.
  \end{equation}
  \begin{equation}\label{equation: mutation of y}
    y_{j}\p=\left\{\begin{array}{ll}
                             y_{k}^{-1} & \text{if }j=k; \\
                             y_{j}y_{k}^{[b_{kj}]_{+}}(y_{k}\oplus 1)^{-b_{kj}} & \text{otherwise}.
                           \end{array}\right.
  \end{equation}
  and $B\p=\mu_{k}(B)$. In this case, we write $\Sigma\p=\mu_{k}(\Sigma)$.
\end{Definition}
(\ref{equation: mutation of B}), (\ref{equation: mutation of x}) and (\ref{equation: mutation of y}) are called the mutation formula of exchange matrices, cluster variables and $Y$-variables respectively. It can be checked that $\Sigma\p$ is also a seed and the seed mutation $\mu_{k}$ ia an involution, that is, $\mu_{k}\mu_{k}(\Sigma)=\Sigma$.
\begin{Definition}
  Let $\T_{n}$ be the $n$-regular tree whose $n$ edges emanating from the same vertex are labeled bijectively by $[1,n]$. A \textbf{cluster pattern} is an assignment of a seed to each vertex of $\T_{n}$ such that if two vertices are connected by an edge labeled $k$, then the seeds assigned to them can be obtained from each other by the mutation in direction $k$.
\end{Definition}
In this paper, the seed assigned to a vertex $t\in\T_n$ is denoted by $\Sigma_{t}=(X_{t},Y_{t},B_{t})$ with
\[X_{t}=(x_{1;t},x_{2;t}\cdots,x_{n;t}),\quad Y_{t}=(y_{1;t},y_{2;t}\cdots,y_{n;t})\quad and\quad B_{t}=(b_{ij}^{t})_{i,j\in[1,n]}.\]

Now we are ready to introduce the definition of cluster algebras.
\begin{Definition}
  Given a cluster pattern, let $\mathcal{S}=\{x_{i;t}\in\F\mid i\in[1,n],t\in\T_{n}\}$ be the set consisting of all cluster variables. The \textbf{cluster algebra} $\mathcal{A}$ associated with the given cluster pattern is the $\Z\P$-subalgebra of $\F$ generated by $\mathcal{S}$.
\end{Definition}

Since up to a cluster isomorphism, $\A$ is uniquely determined by the initial exchange matrix $B$, we also denote it as $\A(B)$. In this paper, when we say a cluster algebra, we always mean a TSSS cluster algebra unless otherwise specified.

\begin{Definition}\cite{FZ1}
  An (labeled) exchange graph $E(\A)$ associated to a cluster algebra $\A$ is an $n$-regular graph whose vertices correspond to (labeled) seeds in $\A$ and whose edges correspond to mutations.
\end{Definition}
According to definition, the (labeled) exchange graph is a quotient graph of $\T_n$ via (labeled) seeds equivalence.

It can be seen from definition that a cluster algebra $\A$ depends on the choice of semifield $\P$. There is a special semifield which plays an extremely important role in cluster theory.
\begin{Definition}
  The tropical semifield $Trop(u_{1},u_{2},\cdots,u_{l})$ is the free abelian multiplicative group generated by $u_{1},u_{2},\cdots,u_{l}$ endowed with addition $\oplus$ defined as
        $$\prod\limits_{j=1}^{l}u_{j}^{a_{j}}\oplus\prod\limits_{j=1}^{l}u_{j}^{b_{j}}=\prod\limits_{j=1}^{l}u_{j}^{min(a_{j},b_{j})}.$$
\end{Definition}

In particular, we say a cluster algebra $\A$ is of \textbf{geometry type} if $\P$ is a tropical semifield. In this case, we usually also denote $\P$ as $Trop(x_{n+1},x_{n+2},\cdots,x_{m})$. Then according to the definition, $y_{j;t}$ is a Laurent monomial in $x_{n+1},x_{n+2},\cdots,x_{m}$ for any $j\in[1,n],t\in\T_{n}$. Hence we can define $b_{ij}^{t}$ for $i\in[n+1,m],j\in[1,n]$ such that
\[y_{j;t}=\prod\limits_{i=n+1}^{m}x_{i}^{b_{ij}^{t}}.\]
Denote $\tilde{B}_{t}=(b_{ij}^{t})_{i\in[1,m],j\in[1,n]}$ and $\tilde{X}_{t}=(x_{1;t},\cdots,x_{n;t},x_{n+1},\cdots,x_{m})$. Then the seed assigned to $t$ can also be represented as $(\tilde{X}_{t},\tilde{B}_{t})$. The mutation formula of $\tilde{B}$ in direction $k$ is the same as (\ref{equation: mutation of B}) (with $i\in[1,m]$ now) while that of $\tilde{X}$ becomes
\[{x_{j}}\p=\left\{\begin{array}{ll}
                             \frac{\prod\limits_{i=1}^{m}x_{i}^{[b_{ik}]_{+}}+\prod\limits_{i=1}^{m}x_{i}^{[-b_{ik}]_{+}}}{x_{k}}, & \text{if } j=k;\\
                             x_{j}, &  \text{otherwise}.
                           \end{array}\right.\]

\begin{Definition}
  A cluster algebra is said to have {\bf  principal coefficients} at vertex $t_{0}$ if $\P=Trop(Y_{t_{0}})$.
\end{Definition}
Hence a cluster algebra having principal coefficients at some vertex is of geometric type. If we use the form $(\tilde{X},\tilde{B})$ to represent a seed, an equivalent definition of having principal coefficients at vertex $t_{0}$ is that the matrix $\tilde{B}_{t_{0}}$ assigned to $t_{0}$ has the form $\tilde{B}_{t_{0}}=\left(\begin{array}{c}
                                                                                                                                                  B_{t_{0}} \\
                                                                                                                                                  I
                                                                                                                                               \end{array}\right),$
where $I$ is the $n\times n$ identity matrix.

\begin{Definition}
  In a cluster algebra with pricipal coefficients, for any $t\in\T_{n}$ and $i\in[1,n]$, denote by $C^{t_0}_{t}=(c_{ij}^t)$ the $n\times n$ matrix obtained from $\tilde{B}_{t}$ by deleting the first $n$ rows and by $c^{t_0}_{i;t}$ the $i$-th column \footnote{In \cite{LP}, we present vectors as row vectors for the convenience of coordinate typing. However, in this paper we would like to present vectors as column vectors in accordance with the notations in \cite{FZ4} as here we do not need to write concrete coordinates frequently and also because we will focus on relations between $G$-matrices and $C$-matrices in section 4, we would like to keep them the same as their original definitions to avoid confusion.} of $C^{t_0}_{t}$. They are called the \textbf{$C$-matrix} and the $i$-th \textbf{$c$-vector} associated to $X_t$ with respect to $X_{t_0}$ respectively.
\end{Definition}

The following theorem is called \textbf{Laurent phenomenon}, which is the most fundamental result in cluster theory.

\begin{Theorem}[Laurent phenomenon]\label{Laurent phenomenon}
  \cite{FZ1, FZ2}Let $\A$ be a cluster algebra with an arbitrary seed $(X,Y,B)$, then any cluster variable of $\A$ can be expressed as a Laurent polynomial in $X$ over $\Z\P$.
\end{Theorem}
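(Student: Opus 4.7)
The plan is to proceed by induction on the length of a mutation sequence from the seed $\Sigma_{t_0}=(X,Y,B)$ to the seed containing the cluster variable in question. The base case is immediate since each $x_i\in X$ is itself a Laurent monomial in $X$. For the inductive step, given a cluster variable $x_{j;t}$ at mutation distance $r+1$, I would write it as $\mu_k(x_{j;t\p})$ for some intermediate cluster $X_{t\p}$ at distance $r$. By the exchange relation~(\ref{equation: mutation of x}) at $t\p$, $x_{j;t}$ is a rational expression in $X_{t\p}$, and by induction every member of $X_{t\p}$ is Laurent in $X$. The delicate point, and the reason naive substitution fails, is that the exchange relation carries the denominator $x_{k;t\p}$; when the Laurent expressions in $X$ are plugged in for the $x_{i;t\p}$, a nontrivial cancellation must occur to produce a Laurent polynomial in $X$.

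The classical way to handle this, following Fomin and Zelevinsky in \cite{FZ1, FZ2}, is the Caterpillar Lemma: any rational expression that is simultaneously Laurent in four consecutive clusters related by mutations $\mu_i$, $\mu_j$, $\mu_i$ (for any pair of indices $i,j$) is automatically Laurent in the entire ``caterpillar'' of clusters reached by alternating such mutations. Combined with the connectedness of $\T_n$ and a direct check that the binomial exchange polynomial and its transform share no common factor with the cluster monomials, this is enough to propagate Laurentness from the initial cluster $X$ outwards to every vertex of $\T_n$.

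In the TSSS setting of the present paper, a more direct argument is available through the polytope functions $\rho_h$ of \cite{LP}. Each initial cluster variable $x_i$ is realised as $\rho_{e_i}$, and the mutation analysis of $\rho_h$ carried out in \cite{LP} identifies every cluster variable $x_{j;t}$ with some $\rho_h$ for $h$ the associated $g$-vector. Since $\rho_h$ is by construction a weighted sum of monomials $X^\alpha$ ranging over lattice points $\alpha$ of the polytope $N_h$, Laurentness in $X$ is built into the definition. The main obstacle along this second route is to confirm that the recursive mutation of polytope functions matches the cluster exchange relation~(\ref{equation: mutation of x}), and that the resulting expression has no pole outside $X$; this compatibility is precisely what is established in \cite{LP}, and it is also what motivates the refined face/mutation analysis pursued later in the present paper.
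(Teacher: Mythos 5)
The paper gives no proof of this statement at all --- it is quoted as a known result with the citation to Fomin--Zelevinsky, whose argument (induction on mutation distance reduced to the Caterpillar Lemma plus coprimality of exchange binomials with cluster variables) is exactly the primary route you outline, so your proposal is essentially the same approach as the one the paper relies on. Your supplementary polytope-function route is a reasonable remark, though you should be aware that the identification $x_{l;t}=\rho_{g_{l;t}}$ in \cite{LP} is itself built on top of the Laurent phenomenon for TSSS cluster algebras, so that route cannot replace the classical argument without risking circularity.
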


Thus for a seed $(X_{t_{0}},Y_{t_{0}},B_{t_{0}})$ and any cluster variable $x_{l;t}$ in $\A$, we can express $x_{l;t}$ as a Laurent polynomial in cluster $X_{t_{0}}$ as
\[x_{l;t}=\frac{P_{l;t}^{t_{0}}}{\prod\limits_{i=1}^{n}x_{i;t_{0}}^{d_{i}^{t_{0}}(x_{l;t})}},\]
where $P_{l;t}^{t_{0}}$ is a polynomial in $X_{t_{0}}$ without non-constant monomial factor. The denominator vector $d_{l;t}^{t_{0}}=(d_{1}^{t_{0}}(x_{l;t}),d_{2}^{t_{0}}(x_{l;t}),\cdots,d_{n}^{t_{0}}(x_{l;t}))^\top$ is called the \textbf{$d$-vector} of $x_{l;t}$ with respect to cluster $X_{t_{0}}$.

Moreover, when $\A$ has principal coefficients at $t_{0}$, it is proved in \cite{FZ4} that \[P_{l;t}^{t_{0}}\in\Z[x_{1,t_{0}},\cdots,x_{n,t_{0}};y_{1,t_{0}},\cdots,y_{n,t_{0}}].\]
Hence $F_{l;t}^{t_{0}}=P_{l;t}^{t_{0}}|_{x_{i;t_{0}}\rightarrow1,\forall i\in[1,n]}$ is a polynomial in $Y_{t_{0}}$, which is called the \textbf{$F$-polynomial} of $x_{l;t}$ with respect to $X_{t_{0}}$.

When $\A$ has principal coefficients at $t_{0}$, under the $\Z^{n}$-grading given by
$$deg(x_{i;t_0})=e_{i},\qquad deg(y_{i;t_0})=-b_{i}^{t_{0}}$$
for any $i\in[1,n]$, where $b_{i}^{t_{0}}$ is the $i$-th column of $B_{t_{0}}$, the Laurent expression of $x_{l;t}$ in $X_{t_{0}}$ is homogeneous with degree $g_{l;t}^{t_{0}}$ which is called the \textbf{$g$-vector} of $x_{l;t}$ with respect to $X_{t_{0}}$. Denote by $G^{t_0}_{t}=(g^{t_0}_{1;t},\cdots,g^{t_0}_{n;t})$ the \textbf{$G$-matrix} associated to $X_t$.

In \cite{FZ4}, according to the above definition and mutation formula (\ref{equation: mutation of x}) of cluster variables, Fomin and Zelevinsky provided the following recurrences of $g$-vectors:
\[g_{i;t_0}^{t_0}=e_i,\quad\forall i\in[1,n],\]
while
\begin{equation}\label{equation: mutation of g-vectors}
  g_{i;t\p}^{t_0}=\left\{\begin{array}{ll}
                     g_{i;t}^{t_0}, & \text{if $i\neq k$;} \\
                     -g_{k;t}^{t_0}+\sum\limits_{j=1}^{n}[\epsilon b_{jk}^t]_+g_{j;t}^{t_0}-\sum\limits_{j=1}^n [\epsilon c_{jk}^t]_+b_j^{t_0}, & \text{if $i=k$.}
                   \end{array}\right.
\end{equation}
for any vertices $t,t\p\in\T_n$ connected by an edge labeled $k\in[1,n]$, where $\epsilon=\pm1$.

In the sequel, for a cluster algebra $\A$, we will always denote by $t_0$ the vertex to which the initial seed assigned unless otherwise specified. And when a vertex is not written explicitly, we always mean the initial vertex $t_0$. For example, we use $X$, $x_{i}$, $P_{l;t}$ to denote $X_{t_{0}}$, $x_{i;t_{0}}$, $P_{l;t}^{t_{0}}$ respectively.

Sometimes we will also need to focus on some cluster algebra whose initial matrix is not $B_{t_0}$. In such situation, following \cite{FZ4}, we use superscripts to claim the initial matrix as well as the initial vertex. For example, we denote by $G^{B;t}_{t\p}$ the $G$-matrix associated to $X_{t\p}$ in the cluster algebra with initial matrix $B$ and initial vertex $t$.

The following theorem shows the importance of principal coefficients case in the study of cluster algebras.
\begin{Theorem}[Separation Theorem]\label{separation theorem}
  \cite{FZ4}For any cluster algebra $\A$ and vertices $t,t\p \in\T_{n}$, the cluster variable $x_{l;t}$ can be expressed as
  \begin{equation*}
    x_{l;t}=\frac{F_{l;t}^{t\p}|_{\F}(\hat{y}_{1;t\p},\cdots,\hat{y}_{n;t\p})}{F_{l;t}^{t\p}|_{\P}(y_{1;t\p},\cdots,y_{n;t\p})}X_{t\p}^{g_{l;t}^{t\p}},
  \end{equation*}
  where
 \begin{equation*}
  \hat{y}_{j;t\p}=y_{j;t\p}\prod\limits_{i=1}^{n}x_{i;t\p}^{b^{t\p}_{ij}}.
  \end{equation*}
\end{Theorem}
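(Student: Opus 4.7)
The plan is to establish the formula first in the principal-coefficients case at $t\p$, where it follows from a direct grading argument, and then bootstrap to an arbitrary semifield by induction on the distance $d(t\p,t)$ in $\T_n$, running the mutation recurrence for $F$-polynomials in parallel with that of cluster variables.

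In the principal-coefficients case at $t\p$, the $\Z^n$-grading with $\deg(x_{i;t\p})=e_i$ and $\deg(y_{i;t\p})=-b_i^{t\p}$ makes the Laurent expansion of $x_{l;t}$ in $X_{t\p}$ homogeneous of degree $g_{l;t}^{t\p}$. Consequently every monomial in this expansion has the form $c\cdot X_{t\p}^{g_{l;t}^{t\p}+B_{t\p}b}\, Y_{t\p}^{b}=c\cdot X_{t\p}^{g_{l;t}^{t\p}}\,\hat y_{t\p}^{b}$ for some $b\in\Z_{\geqslant 0}^{n}$. Summing over $b$ and comparing with the specialization $x_{i;t\p}\to 1$ that defines $F_{l;t}^{t\p}$ yields $x_{l;t}=X_{t\p}^{g_{l;t}^{t\p}}\cdot F_{l;t}^{t\p}(\hat y_{1;t\p},\ldots,\hat y_{n;t\p})$. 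Since $F_{l;t}^{t\p}$ has constant term $1$, its tropical value on the generators $y_{i;t\p}$ of $\P=Trop(y_{1;t\p},\ldots,y_{n;t\p})$ collapses to $1$, so the claimed identity holds in this setting.

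For an arbitrary semifield $\P$, I would induct on $d(t\p,t)$. The base $t=t\p$ is immediate with $F_{l;t\p}^{t\p}=1$ and $g_{l;t\p}^{t\p}=e_l$. For the inductive step, pick a vertex $s$ adjacent to $t$ at strictly smaller distance from $t\p$, apply the cluster mutation formula (\ref{equation: mutation of x}) to express $x_{l;t}$ in terms of $X_s$, substitute the inductive expressions for the $X_s$-entries (those not equal to the mutation index are unchanged, and the mutated one is treated via the previous step), and simplify. The $g$-vector recurrence (\ref{equation: mutation of g-vectors}) identifies the resulting $X_{t\p}$-exponent with $g_{l;t}^{t\p}$; the parallel $F$-polynomial mutation recurrence from \cite{FZ4} identifies the numerator (built in $\F$ from $\hat y$-monomials) with $F_{l;t}^{t\p}|_{\F}(\hat y_{t\p})$ and the denominator (accumulated from the factors $(y_k\oplus 1)^{-1}$ in (\ref{equation: mutation of x})) with $F_{l;t}^{t\p}|_{\P}(y_{t\p})$.

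The main obstacle is keeping the two addition operations strictly separated throughout the induction: the ``$+$'' symbols inside Laurent polynomials must be interpreted in $\F$, whereas the $\oplus$'s arising from the coefficient-normalizing factors must be interpreted in $\P$. Verifying that these two channels aggregate into exactly the numerator/denominator split $F_{l;t}^{t\p}|_{\F}(\hat y_{t\p})/F_{l;t}^{t\p}|_{\P}(y_{t\p})$ is the heart of the argument; once the $F$-polynomial mutation recurrence is established in tandem with the cluster variable recurrence, the two inductions proceed in lockstep and the theorem follows.
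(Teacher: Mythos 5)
This theorem is quoted in the paper from \cite{FZ4} as background; the paper gives no proof of its own, so there is nothing internal to compare against, and your proposal has to be measured against the original argument of Fomin and Zelevinsky. Your outline is essentially their strategy run in the opposite order: they first prove the separation of additions $x_{l;t}=X_{l;t}|_{\F}(x;y)/F_{l;t}|_{\P}(y)$ by exactly the induction you describe (carried out over the universal semifield, so that a single subtraction-free identity specializes to every $\P$ at once), and only afterwards identify $X_{l;t}=F_{l;t}(\hat{y})x^{g_{l;t}}$ via the $\Z^n$-grading in the principal-coefficients case. Your plan does the grading step first and the induction second; that is a legitimate reorganization, and the grading argument itself is fine modulo the fact that it silently uses two nontrivial inputs: that $F_{l;t}^{t\p}$ is an honest polynomial in the $y_{j;t\p}$ (non-negative exponents) and that it has a nonzero constant term, without which the claim that $F_{l;t}^{t\p}|_{\P}$ collapses to $1$ in the tropical semifield fails. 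In the TSSS setting of this paper those facts are themselves theorems (established in \cite{LP}), so they should be cited rather than assumed.

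The genuine soft spot is the inductive step you label the heart of the argument. Two things are asserted there rather than verified. First, the bookkeeping that the $(y_{k}\oplus 1)^{-1}$ factors accumulating through the exchange relations equal precisely $F_{l;t}^{t\p}|_{\P}(y_{t\p})$ requires the $F$-polynomial mutation recurrence to be proved \emph{in lockstep} with the cluster-variable recurrence, with matching normalizations; this is not a citation one can make after the fact, it is the induction itself. Second, and more structurally, the polynomial $F_{l;t}^{t\p}$ in the statement is defined via the principal-coefficients pattern at $t\p$, whereas your induction builds a numerator and denominator inside the given semifield $\P$; you must show that what accumulates is the evaluation of that \emph{same} subtraction-free expression. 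Fomin and Zelevinsky handle this coherence issue by working in the universal semifield, where a single identity can be specialized both to $Trop(Y_{t\p})$ (recovering the principal case and hence $F_{l;t}^{t\p}$) and to the given $\P$. Without that device, or an explicit argument replacing it, your two stages do not automatically talk about the same polynomial, and the proof is incomplete at exactly the point you flag as the heart of the matter.
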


We denote $\hat{Y}_{t}=\{\hat{y}_{1;t},\cdots,\hat{y}_{n;t}\}$ for any $t\in\T_{n}$.

\begin{Definition}
  For any seed $\Sigma_{t}$ assigned to $t\in\T_{n}$, we denote by $\mathcal{U}(\Sigma_{t})$ the $\Z\P$-subalgebra of $\F$ given by
  \begin{equation*}
    \mathcal{U}(\Sigma_{t})=\Z\P[X_{t}^{\pm 1}]\cap\Z\P[X_{t_{1}}^{\pm 1}]\cap\cdots\cap\Z\P[X_{t_{n}}^{\pm 1}],
  \end{equation*}
  where $t_{i}$ is the vertex connected to $t$ by an edge labeled $i$ in $\T_{n}$ for any $i\in[1,n]$. $\mathcal{U}(\Sigma_{t})$ is called the {\bf upper bound} associated with the seed $\Sigma_{t}$. And $\mathcal{U}(\A)=\bigcap\limits_{t\in\T_{n}}\mathcal{U}(\Sigma_{t})$ is called the \textbf{upper cluster algebra} associated to $\A$.
\end{Definition}

Laurent phenomenon claims the inclusion $\A\subseteq\mathcal{U(A)}$.

Sequent definitions and notions in this subsection follow those in \cite{LP}.

In \cite{LP}, we construct $\rho_{h}\in\N[[\hat{Y}]]X^{h}$ for each $h\in\Z^{n}$, that is, $\rho_{h}$ is of the form $\sum\limits_{p\in\N^{n}}a_{p}\hat{Y}^{p}X^{h}$ and $\rho_{h}|_{x_{i}\rightarrow1,\forall i\in[1,n]}$ is a power series of $Y$, where $a_p\in\N$. In order to emphasis that we regard $X$ as variables while $Y$ as coefficients, we will slightly abuse the notation to denote $\rho_{h}\in\N[Y][[X^{\pm1}]]$ and call it a {\bf formal Laurent polynomial} in $X$ with coefficients in $\N[Y]$.

For any $t,t\p\in\T_{n}$ connected by an edge labeled $k$ and any homogeneous Laurent polynomial
$$f\in\Z[Y_{t\p}][X_{t\p}^{\pm 1}]\cap\Z[Y_{t}][X_{t}^{\pm 1}]\subseteq\Z Trop(Y_{t\p})[X_{t\p}^{\pm 1}]$$
with grading $h$, we naturally have $f=F|_{\F}(\hat{Y}_{t\p})X_{t'}^{h}$, where $F$ is obtained from the Laurent expression of $f$ in $X_{t\p}$ by specilizing $x_{i;t\p}$ to 1 for any $i\in[1,n]$. We modify it into the Laurent polynomial
\begin{equation}\label{modify}
\frac{F|_{\F}(\hat{Y}_{t\p})}{y_{k;t\p}^{[h_{k}]_{+}}}X_{t\p}^{h},
\end{equation}
and denote by $L^{t}(f)$ the Laurent expression of this modified form (\ref{modify}) in $X_{t}$ with coefficients in $Y_{t}$ belonging to $\Z Trop(Y_{t})$, where $h_k$ is the $k$-th entry of $h$. Applying $L^{t}$ on $f$ changes the semifield from $Trop(Y_{t\p})$ to $Trop(Y_{t})$, so we compensate for this change with dividing $y_{k;t\p}^{[h_{k}]_{+}}$ as in (\ref{modify}).

Then we can define $L^{t;\gamma}(f)=L^{t}\circ L^{t^{(1)}}\circ\cdots\circ L^{t^{(s)}}(f)$ for any path $\gamma=t-t^{(1)}-\cdots-t^{(s)}-t\p$ in $\T_{n}$ if $L^{t}\circ L^{t^{(1)}}\circ\cdots\circ L^{t^{(j)}}(f)\in\Z[Y_{t^{(j)}}][X_{t^{(j)}}^{\pm 1}]$ for $j\in[0,s]$. It can be verified that $L^{t;\gamma}(f)$ only depends on the endpoints $t$ and $t\p$ (see \cite{LP} for more details), so we usually omit the path in the superscript. Naturally, $L^t$ also makes sense for formal Laurent polynomials.

\begin{Definition}
  (i)\;A \textbf{cluster monomial} in $\A$ is a monomial in a cluster $X_{t}$ for some $t\in\T_{n}$.

  (ii)\;For any $k\in[1,n]$ and $t\in\T_{n}$, the \textbf{exchange binomial} in direction $k$ at $t$ is a binomial
  $$M_{k;t}=\frac{1}{y_{k;t}\oplus1}(y_{k;t}\prod\limits_{i=1}^n x_{i;t}^{[b_{ik}^t]_+}+\prod\limits_{i=1}^n x_{i;t}^{[-b_{ik}^t]_+})\in\Z\P[x_{1;t},\cdots,x_{k-1;t},x_{k+1;t},\cdots,x_{n;t}].$$
\end{Definition}

\begin{Definition}
  (i)\;For a Laurent monomial $f=aY^{p}X^{q}$ in $X$, we call $a$ (respectively, $aY^{p}$) the {\bf constant coefficient} (respectively, {\bf coefficient}) of $f$ and say $f$ is {\bf constant coefficient free} (respectively, {\bf coefficient free}) if $a=1$ (respectively, $aY^{p}=1$).

  (ii)\;For a Laurent polynomial $P\in\Z\P[X^{\pm 1}]$ and a constant coefficient free Laurent monomial $p$ with $\alpha,\beta\in\Z^{n}$, we denote by $co_{p}(P)$ the constant coefficient of $p$ in $P$.

  (iii)\;For any Laurent polynomial $P$, $P\p$ is called a \textbf{summand} of $P$ if for any constant coefficient free Laurent monomial $p$, either $0\leqslant co_{p}(P\p)\leqslant co_{p}(P)$ or $co_{p}(P)\leqslant co_{p}(P\p)\leqslant 0$. $P\p$ is called a \textbf{monomial summand} of $P$ if it is moreover a Laurent monomial.

  (iv)\;For a Laurent polynomial $P\in\Z\P[X_t^{\pm 1}]$, we denote by $deg_{x_{i;t}}(P)$ the $x_{i;t}$-\textbf{degree} of $P$, that is, the $\Z$-grading induced by
  \[deg_{x_{i;t}}(x_{j;t})=\left\{\begin{array}{ll}
                                    0, & \text{if }j\neq i; \\
                                    1, & \text{if $j=i$.}
                                  \end{array}\right.\]
  And we say $P$ is {\bf $x_{i;t}$-homogeneous} if it is homogeneous under this grading.
\end{Definition}

Following \cite{LLZ}, a Laurent polynomial $P$ in $X$ is called \textbf{universally positive} if $P\in\N\P[X_{t}^{\pm 1}]$ for any $t\in\T_{n}$. Moreover, a universally positive Laurent  polynomial is said to be \textbf{universally indecomposable} if it cannot be expressed as a sum of two nonzero universally positive Laurent  polynomials.

The following result is a simple corollary of a well-know theorem characterizing cluster variables in a cluster algebra of rank 2 in \cite{FZ1}.
\begin{Proposition}\label{rank 2 finite}
  Let $\A$ and $\A\p$ be cluster algebras associated to a $2\times2$ matrix $B$ and $-B^\top$ respectively. Then $x_{i;t}=x_{j;t\p}$ in $\A$ if and only if $x_{i;t}=x_{j;t\p}$ in $\A\p$ for any $i,j\in[1,2]$ and any $t,t\p\in\T_2$.
\end{Proposition}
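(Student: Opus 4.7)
The plan is to reduce the statement to the classification of cluster variables in rank-$2$ cluster algebras from \cite{FZ1}, and to observe that this classification depends only on the invariant $|bc|$, which coincides for $B$ and for $-B^{\top}$.

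First I would set up a combinatorial indexing of cluster variables along the bi-infinite path $\T_{2}$. Since the two edges at each vertex of $\T_{2}$ are labelled alternately by $1$ and $2$, and two clusters connected by a single edge differ in exactly one variable, the cluster variables appearing in the pattern arrange themselves as a bi-infinite sequence $(v_{k})_{k\in\Z}$, where $v_{k}$ is the variable shared by the two clusters adjacent to the $k$-th edge. The map $(i,t)\mapsto k$ sending each pair $(i,t)$ with $i\in[1,2]$, $t\in\T_{2}$ to the index of $x_{i;t}$ in this sequence is purely combinatorial: by the mutation formula, a mutation in direction $i$ changes only $x_{i;t}$, so which cluster variable at $t\p$ is identified with which at $t$ (when $t,t\p$ are adjacent) depends only on the label of the edge joining them. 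In particular the bijection $(i,t)\leftrightarrow k$ is identical for $\A(B)$ and for $\A(-B^{\top})$.

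Next I would invoke the rank-$2$ classification of Fomin--Zelevinsky \cite{FZ1}: the variables $v_{k}$ are pairwise distinct if $|bc|\geqslant 4$, while in the finite-type cases the sequence $(v_{k})$ is periodic with period $4,5,6,8$ when $|bc|=0,1,2,3$ respectively. For $B=\left(\begin{smallmatrix}0 & b \\ c & 0\end{smallmatrix}\right)$ one has $-B^{\top}=\left(\begin{smallmatrix}0 & -c \\ -b & 0\end{smallmatrix}\right)$ and hence $|(-b)(-c)|=|bc|$, so the equality pattern on $(v_{k})_{k\in\Z}$ is exactly the same in $\A(B)$ as in $\A(-B^{\top})$. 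Combined with the combinatorial bijection of the first step, this yields $x_{i;t}=x_{j;t\p}$ in $\A(B)$ if and only if $x_{i;t}=x_{j;t\p}$ in $\A(-B^{\top})$, which is the proposition.

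The hard part, modest as it is, will be verifying that the index map $(i,t)\mapsto k$ really is independent of $B$; this reduces to checking from the mutation rule that only the entry in the direction of mutation is changed, so the identification of shared cluster variables between adjacent clusters is dictated solely by the labelling of edges in $\T_{2}$. Beyond that, the proof is a direct invocation of the known rank-$2$ classification.
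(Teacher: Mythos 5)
Your proposal is correct and follows essentially the same route as the paper, which states this proposition as ``a simple corollary'' of the Fomin--Zelevinsky rank-$2$ classification in \cite{FZ1} without writing out the details: the equality pattern of the bi-infinite sequence of cluster variables along $\T_2$ is governed solely by $|bc|$, which is unchanged when passing from $B$ to $-B^\top$, and the combinatorial identification $(i,t)\mapsto k$ is independent of the exchange matrix. Your write-up simply makes explicit what the paper leaves to the reader.
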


\subsection{Polytopes and polyhedral complexes}
Next we briefly introduce some definitions about polytopes and polyhedral complexes mainly based on \cite{Z} and \cite{LP}.

\begin{Definition}
  (i)\;A \textbf{polyhedron} is an intersection of finitely many closed halfspaces in $\R^{n}$ for $n\in\N$.

  (ii)\;The \textbf{convex hull} of a finite set $V=\{\alpha_{1},\cdots,\alpha_{r}\}\subseteq \R^{n}$ is
  \[\text{conv}(V)=\{\sum\limits_{i=1}^{r}a_{i}\alpha_{i}\; |\; a_{i}\geqslant 0,\sum\limits_{i=1}^{r}a_{i}=1\},\]
  the \textbf{affine hull} of $V$ is
  \[\text{aff}(V)=\{\sum\limits_{i=1}^{r}a_{i}\alpha_{i}\; |\; a_i\in\R, \sum\limits_{i=1}^{r}a_{i}=1\},\]
  while the \textbf{cone} of $V$ is
  \[\text{cone}(V)=\{\sum\limits_{i=1}^{r}a_{i}\alpha_{i}\; |\; a_i\geqslant0\}.\]

  (iii)\;An (unweighted) \textbf{polytope} is the convex hull of a finite set of points in  $\R^{n}$ for some $n\in \N$, or equivalently, a polytope is a bounded polyhedron in $\R^{n}$ for $n\in\N$. The \textbf{dimension of a polytope} is the dimension of its affine hull.

  (iv)\;Let $N\subseteq \R^{n}$ be a polyhedron. For some chosen $w\in\R^{n}$ and $c\in\R$, a linear inequality $w^{\top}p\leqslant c$ is called {\bf valid} for $N$ if it is satisfied for all points $p\in N$. A \textbf{face} of $N$ is a set of the form
  \[S = N\cap \{p\in\R^{n}\; |\; w^{\top}p = c\},\]
  where $w^{\top}p\leqslant c$ is a valid inequality for $N$. The {\bf dimension of a face} is the dimension of its affine hull.

  (v)\;The \textbf{vertices}, \textbf{edges} and \textbf{facets} of a polyhedron $N$ are its faces of dimension $0$, $1$, and $dim N - 1$ respectively. Denote by $V(N)$ and $E(N)$ the set consisting of vertices and edges of $N$ respectively.

  (vi)\;The {\bf Minkowski sum} $N\oplus N\p$ of two polyhedra $N$ and $N\p$ is the polyhedron consisting of all points $p+q$ for points $p\in N$ and $q\in N\p$.
\end{Definition}

In \cite{LP} we modify the original definition of polytopes by associating weight to each lattice point.
\begin{Definition}\label{weight}
  For a polytope $N$, the {\bf weight} of a point $p\in N$ is the integer placed on this point, denoted as $co_p(N)$, or simply $co_{p}$ when the polytope $N$ is known clearly. A polytope $N$ equipped with weights is called a {\bf weighted polytope} if $N=\text{conv}(supp(N))$, where $supp(N)=\{p\in N|co_{p}(N)\neq0\}$ is called the support of $N$.
\end{Definition}

For convenience we set $co_{p}(N)=0$ if $p\notin N$.

Minkowski sum of polytopes can be extended to polytopes with weights. For two weighted polytopes $N$ and $N\p$, we define the weights of $N\oplus N\p$ as
\begin{equation*}
co_{q}(N\oplus N\p)=\sum\limits_{p+p\p=q}co_{p}(N)co_{p\p}(N\p)
\end{equation*}
for any point $q\in N\oplus N\p$.

In the sequel, since all the polytopes we concerned about are weighted, we will omit the word ``weighted'' and simply call them polytopes.

For any polytope $N$ embedded in $\R^{n}$ and $w\in\R^{n}$, denote by $N[w]$ the polytope obtained from $N$ by a translation along $w$.

Given a polytope $N$ with a vertex $p$, a \textbf{lattice generating set} of $N$ based on $p$ is a minimal set of vectors $V=\{v_1,\cdots,v_r\}$ satisfying that for any $i\in[1,n]$, there are two points $q_i$ and $q_i\p$ with non-zero weights on an edge of $N$ such that $q_i\p-q_i=v_i$ and for any point $p\p\in N$ with non-zero weight, there is unique $a_i\in\N$ such that
\[p\p=p+\sum\limits_{i=1}^{r}a_iv_i.\]
Denote
$$ldim(N)=\min\{|V|\mid V\text{ is a lattice generating set of }N\}.$$
Apparently, $ldim(N)\geqslant dim(N)$.

\begin{Definition}
  Given a polytope $N$ with a point $v\in N$ and $i_{1},\cdots,i_{r}\in[1,n]$, define $\{i_{1},\cdots,i_{r}\}$-\textbf{section} of $N$ at $v$ to be the convex hull of lattice points in $N$ whose $j$-th coordinates equal to that of $v$ for any $j\in[1,n]\setminus\{i_{1},\cdots,i_{r}\}$.
\end{Definition}

For any two points $p$ and $q$, denote $l(\overline{pq})$ to be the length of the segment $\overline{pq}$ connecting $p$ and $q$.

\begin{Definition}
(i)\; A map $\eta:\R^r\rightarrow\R^s$ is called \textbf{affine} if $\eta(p)=Ap+w$ for any $p\in\R^r$, where $A\in Mat_{s\times r}(\R)$ and $w\in\R^s$.

(ii)\; A \textbf{projection} $\tau$ of two polytopes $N$ and $N\p$ is a restriction of an affine map $\eta:\R^{dim(N)}\rightarrow\R^{dim(N\p)}$ satisfying that $\tau(N)=N\p$ and the weights associated to $p$ in $N$ and to $\tau(p)$ in $N\p$ are the same for any (not necessary lattice) points $p\in N$, where the weights of non-lattice points are set to be zero. A projection is an {\bf isomorphism} if it is bijective.
\end{Definition}

Assume the dimensions of $N$ and $N\p$ are $r$ and $s$ respectively, and they are embedded in $\R^r$ and $\R^s$ respectively. For each $i\in[1,r]$, there are two different (not necessary lattice) points $p,p\p\in N$ such that $p-p\p=l(\overline{pp\p})e_{i}$. Then a linear map $\tilde\tau$ is induced by a projection $\tau: N\rightarrow N\p$ as
\begin{equation}\label{tildetau}
  \begin{array}{ccc}
    \tilde{\tau}:\quad\R^{r}\quad&\longrightarrow&\quad\R^{s} \\
    \qquad\qquad e_{i}\qquad & \mapsto & \qquad \frac{\tau(p)-\tau(p\p)}{l(\overline{pp\p})}
  \end{array}
\end{equation}
In the later discussion, $N\p$ is often a face of some polytope with higher dimension $n$, so we usually slightly abuse the notation to use $\tilde{\tau}$ as the linear map:
$$\tilde{\tau}: \; \R^{r}\quad\longrightarrow\quad\R^{s}\quad\hookrightarrow\quad\R^{n}.$$
A polytope projection is called \textbf{non-negative} if $\tilde{\tau}(e_i)$ is a non-negative vector for any $i$.

\begin{Definition}
  A \textbf{polyhedral complex} $K$ is a set of polyhedra in $\R^n$ such that

  (a)\; any face of a polyhedron in $K$ is in $K$.

  (b)\; the intersection of any two polyhedra in $K$ is a common face of them.
\end{Definition}
The \textbf{underlying set} $|K|$ of a polytope complex $K$ is the union of all polyhedra in $K$. A \textbf{fan} is a polyhedral complex consisting of polyhedral cones.

\begin{Definition}
  A map of two polyhedral complexes $f:\quad K\rightarrow L$ is induced from a continuous map $f:\quad |K|\rightarrow|L|$ of their underlying set such that it is affine when restricted to any face of $K$. A map of polyhedral complexes is called bijective if it is a bijection as a map of sets.
\end{Definition}

Given a polytope $N\in\R^n$, there are several natural ways to construct related polyhedral complexes. Here we introduce two of them which will be used later.
\begin{Definition}
  (i)\; The face complex of a polytope $N$ is the complex consisting of all faces of $N$.

  (ii)\; The normal fan of a polytope $N$ is
  \[\mathcal{N}(N):=\{C_S|S\text{ is a non-empty face of }N\},\]
  where
  \[C_S=\{w\in\R^n|S\subseteq\{p\in N|w^\top p=\max\{q\in N|w^\top q\}\}\}\]
  for any non-empty face $S$ of $N$.
\end{Definition}

For two fans $K$ and $K$ in $\R^n$, denote by
\[K\wedge K\p=\{C\cap C\p|C\in K,\qquad C\p\in K\p\}\]
their \textbf{common refinement}. In particular, if $K$ and $K\p$ are normal fans of two polytopes $N$ and $N\p$, then their common refinement equals the normal fan of $N\oplus N\p$.

\subsection{Newton polytopes}
Following Definition \ref{weight}, we can obtain a correspondence from polytopes to Laurent polynomials in the following way:

(i)\; To a Laurent monomial $a_{v}Y^{v}\in\Z[Y^{\pm}]$, we associate a vector $v$ with weight $a_v$. Hence a Laurent polynomial $f(Y)=\sum\limits_{v\in\Z^{n}}a_{v}Y^{v}$ with $a_{v}\neq 0\in\Z$ corresponds to a set consisting of vectors $v$, which is called the \textbf{support} of $f(Y)$, with weights $a_{v}$.

(ii)\;Denote by $N(f)$ the convex hull of the support of $f(Y)$ with integers $a_{v}$ placed at lattice points $v$. Then, we set up the following bijection:

$$\{\text{Laurent polynomials}\; f(Y) \}\longleftrightarrow \{\text{weighted polytopes}\; N(f)\}$$
\\
In particular, polynomials in $Y$ correspond to polytopes lying in the non-negative quadrant.

For a principal coefficients cluster algebra $\A$ of rank $n$, when a vector $h\in\Z^{n}$ is given, the above bijection naturally induces a bijection maps a homogeneous Laurent polynomial $f(\hat{Y})X^{h}$ of degree $h$ to the weighted polytope $N(f)$ which corresponds to $f(Y)=f(\hat{Y})X^{h}|_{x_{i}\rightarrow 1,\forall i}$.

Hence we call $N(f)$ {\bf the Newton polytope} corresponding to the Laurent polynomial $f(Y)$ or to the homogeneous Laurent polynomial $f(\hat{Y})X^{h}$ (with respect to $h$). In particular, denote by $N_{l;t}$ the Newton polytope of $F_{l;t}$ associated to the cluster variable $x_{l;t}$ for any $l\in[1,n]$ and $t\in\T_n$.

Due to the above correspondence, when $h$ is clear, we also use point $p$ to represent its corresponding Laurent monomial $\hat{Y}^pX^h$ in the sequel for convenience.

\section{Polytope functions under a single mutation}
We briefly introduce some results in \cite{LP} which will be used in the following discussion.

Let $\A$ be a cluster algebra of rank $n$ with principal coefficients and $(X,Y,B)$ be its initial seed. For each $h\in\Z^n$, we can construct a homogeneous formal Laurent polynomial $\rho_{h}\in\N[Y][[X^{\pm 1}]]$ of degree $h$ (see Construction 3.15 
in \cite{LP} for details) called the \textbf{polytope function} associated to $h$ with respect to $X$. Then denote
\[\widehat{\mathcal{P}}=\{L^{t_0}(\rho_{h}^t)|h\in\Z^{n}\}\qquad\text{and}\qquad\mathcal{P}=\widehat{\mathcal{P}}\cap\N Trop(Y)[X^{\pm 1}],\]
where $t$ is an arbitrary vertex in $\T_n$. For people who are not familiar with polytope functions, it is all right to take $h$ as $g$-vectors of cluster monomials and thus consider the Laurent expressions of cluster monomials and their Newton polytopes instead.

\begin{Theorem}\label{results in LP}
  (i)\; For any $h\in\Z^{n}$ and any $k\in[1,n]$, there is
  \[h^{t_{k}}=h-2h_{k}e_{k}+h_{k}[b_{k}]_{+}+[-h_{k}]_{+}b_{k}\]
  such that $L^{t_{k}}(\rho_{h})=\rho^{t_{k}}_{h^{t_{k}}}$, where $t_{k}\in\T_{n}$ is the vertex connected to $t_{0}$ by an edge labeled $k$ and $h_{k}$ is the $k$-th entry of $h$.

  (ii)\;Both $\widehat{\mathcal{P}}$ and $\mathcal{P}$ are independent of the choice of $t$. In particular, $x_{l;t}=\rho_{g_{l;t}}$ for any $l\in[1,n]$ and $t\in\T_{n}$, so $\mathcal{P}$ contains all coefficient free cluster monomials.

  (iii)\;Let $S$ be an $r$-dimensional face of $N_{h}$ for $h\in\Z^{n}$ such that $\rho_{h}\in \mathcal{P}$. Then there is a vector $h\p\in\Z^{ldim(S)}$, a cluster algebra $\A\p$ with principal coefficients of rank $ldim(S)$ and a non-negative polytope projection $\tau: N_{h\p}|_{\A\p}\rightarrow S$. In particular, $\tau$ is an isomorphism when $ldim(S)=r$, and $\rho_{h\p}$ is a cluster monomial in $\A\p$ when $\rho_{h}$ is a cluster monomial in $\A$.

  When $S$ is moreover an $I$-section of $N_h$ for some $I\subseteq[1,n]$, then $ldim(S)=r$, we may choose the initial exchange matrix of $\A\p$ to be obtained from that of $\A$ by deleting rows and columns parameterized by indices not in $I$ and $\tilde{\tau}(e_i)=e_i$ for any $i\in I$.

  (iv)\;When $\rho_{h}\in\mathcal{P}$, $[d_{k}(\rho_{h})]_{+}$ equals the maximal length of edges of $N_{h}$ parallel to $e_k$ for any $k\in [1,n]$. Moreover when $\rho_{h}$ is a cluster variable, $d_{k}(\rho_{h})\geqslant-1$ and the equality holds if and only if $\rho_{h}=x_{k}$.
\end{Theorem}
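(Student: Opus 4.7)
The plan is to establish the four parts in the order listed, since each leans on the previous. For part (i), I would start from the explicit construction of $\rho_h$ in [LP, Construction 3.15] and compute directly how a single application of $L^{t_k}$ transforms the degree. Concretely, $L^{t_k}$ substitutes the exchange relation for $x_k$ and re-expresses in $X_{t_k}$ up to the normalization in (\ref{modify}); the resulting shift in the degree splits into a reflection $-2h_k e_k$ (coming from $x_k \mapsto x_k^{-1} \cdot (\text{binomial})$), a contribution $h_k[b_k]_+$ from the numerator of the exchange monomial when $h_k \geq 0$, and a contribution $[-h_k]_+ b_k$ from the denominator side when $h_k < 0$. Combined they give the stated formula. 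The self-consistency check is that when $h$ is the $g$-vector of a cluster monomial, $h^{t_k}$ must agree with the $g$-vector mutation formula (\ref{equation: mutation of g-vectors}); in general, the construction of $\rho_h$ is built so that this is automatic.

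For part (ii), independence of $\widehat{\mathcal{P}}$ and $\mathcal{P}$ from the base vertex $t$ is an immediate iteration of (i) along any path in $\T_n$, because $L^{t;\gamma}$ is a composition of single-step operators, each of which sends some $\rho_h$ to another $\rho_{h'}$. The identification $x_{l;t}=\rho_{g_{l;t}}$ is then proved by induction on the distance $d(t_0,t)$: the base case $t=t_0$ is $x_i=\rho_{e_i}$, which follows directly from the definition of $\rho$ at the initial seed. For the inductive step, if $t,t'$ are joined by an edge labeled $k$, then the exchange relation for $x_{k;t'}$ has exactly the shape dictated by (i), and matching the degree-shift formula for $h^{t_k}$ against (\ref{equation: mutation of g-vectors}) finishes the step. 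Containment of cluster monomials in $\mathcal{P}$ follows by multiplicativity of polytope functions indexed by $g$-vectors of cluster monomials.

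For part (iii), the strategy is to realize each face $S$ of $N_h$ as the Newton polytope of a polytope function in a smaller cluster algebra. When $S$ is an $I$-section, the natural candidate for $\A'$ is the cluster algebra whose initial exchange matrix is the principal submatrix $B|_I$ obtained by deleting rows and columns outside $I$, and the projection $\tilde\tau$ is the inclusion $\R^I\hookrightarrow\R^n$ sending $e_i\mapsto e_i$. The key point to verify is that the weight $co_p(S)$ inherited from $N_h$ agrees with the weight assigned to the corresponding lattice point by $\rho_{h'}|_{\A'}$; this uses the recursive/local nature of the construction of $\rho_h$ in [LP], which only depends on the restricted mutation rule in the chosen directions. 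For a general face $S$, the affine hull of $S$ is spanned by a lattice generating set $\{v_1,\ldots,v_{ldim(S)}\}$ lying along edges; one picks a change of coordinates sending these to standard basis vectors and again reads off $h'$ and $\tau$, now non-negative rather than an isomorphism. The isomorphism case $ldim(S)=r$ follows because the lattice generating set is then a basis. Preservation of the cluster-monomial property follows because $g$-vectors of cluster monomials restrict to $g$-vectors in the sub-algebra.

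For part (iv), the denominator exponent $d_k(\rho_h)$ is read off from the Newton polytope as the negative of the minimum of the $k$-th coordinate over $\mathrm{supp}(N_h)$, shifted by $h_k$; equivalently, $[d_k(\rho_h)]_+$ equals the extent of $N_h$ in the $-e_k$ direction, which by convexity and the edge structure of $N_h$ equals the maximal length of an edge of $N_h$ parallel to $e_k$. The bound $d_k(\rho_h)\geq -1$ for cluster variables is a standard denominator-vector fact for rank-$1$ and rank-$2$ sub-algebras; here we deduce it by applying (iii) to the edge of $N_h$ parallel to $e_k$ (if any) to reduce to a sub-algebra of rank at most $2$. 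Finally, if $d_k(\rho_h)=-1$ then the reduction to the rank-$1$ or rank-$2$ sub-algebra forces $\rho_h=x_k$ by the explicit classification of cluster variables in those cases. The main obstacle throughout is part (iii): rigorously identifying the weights on a face of $N_h$ with those of a polytope function in a sub-algebra, which is where the explicit construction in [LP] does the heavy lifting.
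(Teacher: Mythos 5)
This theorem is not proved in the paper at all: it is a compilation of results imported from the companion work \cite{LP} (the paper introduces it with ``We briefly introduce some results in \cite{LP}''), so there is no in-paper argument to compare your proposal against. Judged on its own terms, your sketch is a reasonable reconstruction of the shape of the arguments for (i)--(iii): (i) is indeed a direct computation from the construction of $\rho_h$ together with the normalization (\ref{modify}); (ii) follows by iterating (i) along paths in $\T_n$ and inducting on distance; and (iii) is exactly where the recursive definition of the weights does the heavy lifting. You correctly identify (iii) as the main obstacle, but as written you simply defer it to ``the recursive/local nature of the construction in [LP]'', so for the hardest part the proposal is a plan rather than a proof.

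There is one concrete error, in your reading of part (iv). The denominator exponent $d_k(\rho_h)$ is not ``the negative of the minimum of the $k$-th coordinate over $\mathrm{supp}(N_h)$, shifted by $h_k$'': the $x_k$-degree of a lattice point $p$ is $h_k+\sum_j b_{kj}p_j$ (note $b_{kk}=0$, so the $k$-th coordinate of $p$ does not enter at all), and translation along $e_k$ leaves the $x_k$-degree unchanged. The mechanism actually used, visible in Section 3 of this paper, is the decomposition $\rho_h=\sum_{s<0}x_k^sM_k^{-s}P_s(k)+\sum_{s\geqslant0}x_k^sP_s(k)$: Laurentness at the adjacent vertex $t_k$ forces every term of $x_k$-degree $s<0$ to be divisible by $M_k^{-s}$, and the Newton polytope of $M_k$ is a unit segment parallel to $e_k$, so $[d_k(\rho_h)]_+=[-\min_s s]_+$ is realized as the maximal length of an edge of $N_h$ parallel to $e_k$. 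Your alternative route via ``convexity and the edge structure'' would also fail in general, because a polytope's extent in a coordinate direction need not be witnessed by any edge parallel to that direction. The final claims of (iv) ($d_k\geqslant-1$ for cluster variables, with equality exactly for $x_k$) do reduce to low rank roughly as you say, via (iii).
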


\begin{Remark}
  For each face $S$ of $N_h$, there is a unique non-negative lattice generating set $V$ of $ldim(S)$ elements based on the unique minimal point $p$ of $S$ and the non-negative polytope projection $\tau$ is induced by $V$ and $p$ such that $\tilde{\tau}$ bijectively sends the standard basis to $V$. Then $h\p$ and $\A\p$ are properly chosen to make sure $\tau(N_{h\p}|_{\A\p})=S$.
\end{Remark}

Due to Theorem \ref{results in LP} (i), we have the following definition of polytope mutations.

\begin{Definition}
  The polytope $N_{h^{t_{k}}}^{t_{k}}$ of $\rho_{h^{t_{k}}}^{t_{k}}$ is called the \textbf{mutation of the polytope $N_{h}$ of $\rho_h$} in direction $k$, and it is denoted by $\mu_{k}(N_{h})$.
\end{Definition}

By definition, we can obtain $L^{t_k}(\rho_{h^{t_k}})$ from $\rho_h$ through following process:

For an polytope function $\rho_{h}$ and any $k\in[1,n]$, $\rho_{h}$ has a decomposition with respect to $x_{k}$-degree
\[\rho_{h}=\sum\limits_{s<0}x_{k}^{s}M_{k}^{-s}P_s(k)+\sum\limits_{s\geqslant0}x_{k}^{s}P_s(k),\]
where $P_s(k)\in\N[Y][[(X\setminus\{x_{k}\})^{\pm1}]]$. Hence due to the mutation formula (\ref{equation: mutation of x}) of cluster variables, the Laurent expression of $\rho_{h}$ in $X_{t_{k}}$ over $\N[[Y]]$ equals
\begin{equation}\label{equation: expression of rho_h in X_tk}
  \rho_{h}=\sum\limits_{s<0}x_{k;t_{k}}^{-s}P_s(k)+\sum\limits_{s\geqslant0}x_{k;t_k}^{-s}M_{k}^{s}P_s(k),
\end{equation}
where $t_k$ is the vertex connected to $t_0$ by an edge labeled $k$. Then $L^{t_{k}}(\rho_{h})$ is obtained from (\ref{equation: expression of rho_h in X_tk}) by dividing $y_{k}^{[h_{k}]_{+}}$ and then substituting Y-variables $y_{i}$ by Laurent monomials in $Y_{t_{k}}$ according to (\ref{equation: mutation of y}) as
\[L^{t_k}(\rho_{h})=\sum\limits_{s<0}x_{k;t_{k}}^{-s}P\p_s(k)+\sum\limits_{s\geqslant0}x_{k;t_k}^{-s}M_{k;t_k}^{s}P\p_s(k)\]
with $P\p_s(k)\in\N[Y_{t_k}][[(X_{t_k}\setminus\{x_{k;t_k}\})^{\pm1}]]$ (note that the semifield is changed from $Trop(Y)$ to $Trop(Y_{t_k})$ as mentioned in the previous section).

\begin{Definition}
  In the above settings, we say that under the mutation in direction $k$, $x_{k}^{s}M_{k}^{[-s]_{+}}p$ and $x_{k;t_k}^{-s}M_{k;t_k}^{[s]_{+}}p\p$ \textbf{correlate} to each other for any non-zero monomial summands $p$ of $P_{s}(k)$ and $p\p$ of $P\p_s(k)$ such that $p\p$ can be obtained from $p$ by dividing $y_{k}^{[h_{k}]_{+}}$ and then substituting Y-variables $y_{i}$ by Laurent monomials in $Y_{t_{k}}$ according to (\ref{equation: mutation of y}), and we also say two non-zero monomial summands of $x_{k}^{s}M_{k}^{[-s]_{+}}p$ and $x_{k;t_k}^{-s}M_{k;t_k}^{[s]_{+}}p\p$ respectively correlate to each other under the mutation in direction $k$.

  In this sense, we also say two faces $S_1$ and $S_2$ of $N_h$ and $\mu_k(N_h)$ respectively correlate to each other under the mutation in direction $k$ if for any point $q$ in $S_i$ with non-zero weight, there is $q\p$ in $S_{3-i}$ with non-zero weight correlating to $q$ under the mutation in direction $k$ for $i=1,2$.
\end{Definition}

In the perspective of polytopes, $\mu_{k}(N_{h})$ is obtained from $N_{h}$ by the following three steps:

(a)\;For each $k$-section of $N_{h}$, assume it is the segment $\overline{pq}$ with $q\geqslant p$ and $$co_{p\p}=\sum\limits_{i=0}^{l(\overline{pp\p})}a_{i}\binom{[-deg_{x_{k}}(\hat{Y}^{p}X^{h})]_{+}}{l(\overline{pp\p})-i}$$
with $a_{i}\in\N$ (in fact $a_{i}$ here equals $m_{k}(p+ie_{k})|_{N_{h}}\in\N$ with $s_{k}=1$ defined in \cite{LP}) for any point $p\p$ lying in $\overline{pq}$, then replace $\overline{pq}$ by the segment $\overline{pq\p}$ with weight $co\p_{p\p}=\sum\limits_{i=0}^{l(\overline{pp\p})}a_{i}\binom{[deg_{x_{k}}(\hat{Y}^{p}X^{h})]_{+}}{l(\overline{pp\p})-i}$ for any point $p\p$ lying in $\overline{pq\p}$, where $q\p=q+deg_{x_{k}}(\hat{Y}^{p}X^{h})e_{k}$. After the replacement of all $k$-sections, we get a new polytope $N$ as their convex hull.

(b)\;Translate $N$ along $-[h_{k}]_{+}e_{k}$ to get $N[-[h_{k}]_{+}e_{k}]$.

(c)\;$\mu_{k}(N_{h})=\phi(N[-[h_{k}]_{+}e_{k}])$, where
\[\phi:\quad\R^n\quad  \longrightarrow  \quad\R^n\]
is an $\R$-linear transformation determined by
\begin{equation*}
  \phi(e_{j})=\left\{\begin{array}{ll}
                       -e_{k}, & \text{if }j=k; \\
                       e_{j}+[b_{kj}]_{+}e_{k}, & \text{otherwise}.
                     \end{array}\right.
\end{equation*}
following (\ref{equation: mutation of y}).

According to Theorem \ref{results in LP} (iii), for any $r$-dimensional face $S$ of $N_{h}$, there is a vector $h\p\in\Z^{ldim(S)}$, a cluster algebra $\A\p$ with principal coefficients of rank $ldim(S)$ and a non-negative polytope projection $\tau: N_{h\p}|_{\A\p}\rightarrow S$.

(i)\;If $S$ contains an edge parallel to $e_k$, then there is $l\in[1,ldim(S)]$ such that $\tilde{\tau}(e_l)=e_{k}$, and by comparing the above three steps of how $N_{h}$ and $N_{h\p}|_{\A\p}$ are changed under the mutation in direction $k$ and in direction $l$ respectively, we can see $\tilde{\tau}$ induces a polytope projection $\tau\p$ mapping $\mu_{l}(N_{h\p}|_{\A\p})$\footnote{In fact, sometimes $\tau\p$ requires certain conditions on $\A\p$ which are not demanded by $\tau$ as the construction of $\mu_{l}(N_{h\p})$ may rely on some entries of the exchange matrix which have no effect on $N_{h\p}$. So $\tau\p$ may not be able to found for some choice of $\A\p$. However, as we only care about the fact that $\tau\p$ exists for certain choice of $\A\p$, we prefer to skip the details of replacing our choice of $\A\p$ and always assume we have chosen a proper one at the beginning. Same for the following cases.} to the face $S\p$ of $\mu_k(N_h)$ correlating to $S$ under $\mu_k$.

(ii)\;If $S$ is incident to a segment parallel to $e_{k}$ but it does not contain any segment parallel to $e_{k}$, then each point in $S$ is a vertex of the $k$-section of $N_h$ at this point. Moreover, because $S$ is a face, all points in $S$ is either the smaller vertices (represented by $p$ in step (a)) or the larger vertices (represented by $q$ in step (a)) of the $k$-sections. In the former case, $S$ is also a face of $N$. While in the latter case, according to step (a), for every point $q\in S$ with weight $co_{q}(N_{h})$ we get $q\p$ lying in the same face $S_N$ of $N$ with weight $co_{q}(N_{h})$ because $deg_{x_{k}}(\hat{Y}^{q}X^{h})=h_{k}+\sum\limits_{i=1}^{n}q_{i}b_{ki}$ linearly depends on the coordinate of $q$. Therefore $S$ is isomorphic to $S_N$. Also note that translation and $\phi$ in step (b) and (c) respectively keep isomorphisms. Hence in both cases the isomorphism and $\tau$ leads to a polytope projection from $N_{h\p}|_{\A\p}$ to a face $S\p$ of $\mu_k(N_h)$ correlating to $S$ under $\mu_k$.

(iii)\;If $S$ is not incident to any segment parallel to $e_{k}$, same as (ii) we can find a polytope projection from $N_{h\p}|_{\A\p}$ to a face $S\p$ of $\mu_k(N_h)$ correlating to $S$ under $\mu_k$. Moreover, if $deg_{x_k}(\hat{Y}^pX^h)>0$ for some point $p$ in $S$, there is another face $S^{\prime\prime}$ of $\mu_k(N_h)$ correlating to $S$ under $\mu_k$ with $ldim(S^{\prime\prime})=ldim(S)+1$. As $S^{\prime\prime}$ satisfies (i), we can find a vector $h^{\prime\prime}\in\Z^{ldim(S^{\prime\prime})}$, a cluster algebra $\A^{\prime\prime}$ with principal coefficients of rank $ldim(S^{\prime\prime})$ and a polytope projection $\tau\p: \mu_{ldim(S^{\prime\prime})}(N_{h^{\prime\prime}}|_{\A^{\prime\prime}})\rightarrow S^{\prime\prime}$ induced by $\tau$ such that $\tilde{\tau}\p$ sends the standard basis to the lattice generating set based on the minimal point in $S\p$.

More precisely, $h^{\prime\prime}=\gamma_{deg_{x_k}(\hat{Y}^pX^h);ldim(S^{\prime\prime})}(h\p)$ with $p$ being the minimal point in $S$, the initial exchange matrix of $\A^{\prime\prime}$ equals that of $\A\p$ after deleting their $ldim(S^{\prime\prime})$-th column and row while the $ldim(S^{\prime\prime})$-th row of the initial exchange matrix of $\A^{\prime\prime}$ are chosen to make $deg_{x_k}(\hat{Y}^qX^{h^{\prime\prime}}|_{\A^{\prime\prime}})=deg_{x_k}(\hat{Y}^{\tau(q)}X^h|_{\A})$ for any point $q\in N_{h\p}|_{\A\p}$.

(iv)\;If $S$ does not contain any edge but a segment connecting lattices with non-zero weights parallel to $e_k$, then $\tilde{\tau}(e_i)\neq e_k$ for any $i\in[1,ldim(S)]$ and for any vertex $p$ of $S$, $deg_{x_k}(\hat{Y}^pX_h)\geqslant0$ if $\pi_k(p)$ is a vertex of $\pi_k(S)$. As $deg_{x_{k}}(\hat{Y}^{q}X^{h})=h_{k}+\sum\limits_{i=1}^{n}q_{i}b_{ki}$ linearly depends on the coordinate of $\pi_k(q)$, $deg_{x_k}(\hat{Y}^qX_h)\geqslant0$ for any $q\in S$. Similar to (iii), when $deg_{x_k}(\hat{Y}^qX_h)=0$ for any $q\in S$, the face of $\mu_k(N_h)$ correlating to $S$ under $\mu_k$ is isomorphic to $S$, hence there is a polytope projection from $N_{h\p}|_{\A\p}$ to it; when $deg_{x_k}(\hat{Y}^pX^h)>0$ for some point $p$ in $S$, we get the face $S\p$ of $\mu_k(N_h)$ correlating to $S$ under $\mu_k$ with $ldim(S\p)=ldim(S)+1$, a vector $h^{\prime\prime}\in\Z^{ldim(S\p)}$, a cluster algebra $\A^{\prime\prime}$ and a polytope projection $\tau\p: \mu_{ldim(S\p)}(N_{h^{\prime\prime}}|_{\A^{\prime\prime}})\rightarrow S\p$ induced by $\tau$ such that $\tilde{\tau}\p$ sends the standard basis to the lattice generating set based on the minimal point in $S\p$.

Here $h^{\prime\prime}=\gamma_{deg_{x_k}(\hat{Y}^pX^h);ldim(S^{\prime\prime})}(h\p)$ with $p$ being the minimal point in $S$, the initial exchange matrix of $\A^{\prime\prime}$ equals that of $\A\p$ right multiplying $(I_n\;\alpha)$ after deleting their $ldim(S^{\prime\prime})$-th row while the $ldim(S^{\prime\prime})$-th row of the initial exchange matrix of $\A^{\prime\prime}$ are chosen to make $deg_{x_k}(\hat{Y}^qX^{h^{\prime\prime}}|_{\A^{\prime\prime}})=deg_{x_k}(\hat{Y}^{\tau(q)}X^h|_{\A})$ for any point $q\in N_{h\p}|_{\A\p}$, where $\alpha\in\N^{ldim(S)}$ satisfies $e_k=(\tilde{\tau}(e_1),\cdots,\tilde{\tau}(e_{ldim(S)}))\alpha$.

Thus we have the following result showing how a face of $N_h$ is changed under a single mutation.
\begin{Theorem}\label{mutation of faces}
  For any face $S$ of $N_{h}$ and any $k\in[1,n]$, there is a face $S\p$ of $\mu_k(N_{h})$ correlating to $S$ under $\mu_k$, a cluster algebra $\A\p$ of rank $r$ and a vector $f\in\Z^r$, where $r=\max\{ldim(S),ldim(S\p)\}$ such that

  (i)\;$S$ is a non-negative polytope projection of $N_{f}|_{\A\p}$ under $\tau$ while $S\p$ is a non-negative polytope projection of $\mu_l(N_{f}|_{\A\p})$ under $\tau\p$ if either $S$ is not incident to any edge parallel to $e_{k}$ or $S$ contains a segment parallel to $e_k$, where $l$ is the index such that $\tilde{\tau}(e_l)=e_k$ or $\tilde{\tau}\p(e_l)=e_k$.

  (ii)\;$S$ is isomorphic to $S\p$ and they are non-negative polytope projection of $N_f|_{\A\p}$ if $S$ does not contain a segment parallel to $e_{k}$.
\end{Theorem}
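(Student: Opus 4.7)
The plan is to organize the four-part case analysis developed in the paragraphs immediately preceding the theorem into the two clauses of the statement, verifying in each case that the projections are genuinely non-negative and that the correlating face $S'$ is correctly identified through Steps~(a)--(c) of the polytope mutation. The key observation underlying every case is that $\deg_{x_k}(\hat Y^p X^h) = h_k + \sum_i p_i b_{ki}$ is an affine function of the coordinate of $p$, so Step~(a) of the polytope mutation acts compatibly through any polytope projection $\tau$ provided the initial exchange matrix of the smaller cluster algebra is chosen so that the corresponding $\deg_{x_l}$-values are matched.

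First I would invoke Theorem~\ref{results in LP}(iii) to extract a vector $h'\in\Z^{ldim(S)}$, a rank-$ldim(S)$ principal coefficients cluster algebra $\A'$, and a non-negative projection $\tau:N_{h'}|_{\A'}\to S$ whose linearization $\tilde\tau$ sends the standard basis bijectively onto the lattice generating set of $S$ based on its minimal vertex. Then I would case-split according to how $S$ meets $e_k$: (A)~$S$ contains an edge parallel to $e_k$; (B)~$S$ is incident to but does not contain a segment parallel to $e_k$; (C)~$S$ is not incident to any segment parallel to $e_k$; (D)~$S$ contains a lattice segment of nonzero-weight points parallel to $e_k$ but no edge parallel to $e_k$. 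In case~(A), I set $f=h'$, $r=ldim(S)$, and take $l$ with $\tilde\tau(e_l)=e_k$: the $k$-section replacement on $N_h$ pulls back to the $l$-section replacement on $N_{h'}|_{\A'}$ by the affinity observation, Steps~(b) and~(c) commute with non-negative projections, and the induced map $\tau'$ from $\mu_l(N_{h'}|_{\A'})$ onto $S'$ is non-negative, which places us in clause~(i). Case~(B), together with the sub-case of~(C) where $\deg_{x_k}$ vanishes identically on $S$, yields an isomorphism $S\cong S'$ with both sides arising as non-negative projections of the same $N_{h'}|_{\A'}$; this gives clause~(ii). In the remaining sub-case of~(C) and in case~(D) the correlating face has $ldim(S')=ldim(S)+1$, so I enlarge $\A'$ to $\A''$ by adjoining one coordinate, set $f=\gamma_{\cdots}(h')$, and prescribe the new row of the initial exchange matrix so that $\deg_{x_k}(\hat Y^q X^f|_{\A''}) = \deg_{x_k}(\hat Y^{\tau(q)} X^h|_\A)$ at every vertex $q$ of $N_{h'}|_{\A'}$; this reduces back to case~(A) at the enlarged rank and recovers clause~(i).

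The main obstacle is precisely this rank-enlargement step: one must show that the new row of the exchange matrix can be chosen so that (1)~$\A''$ is still TSSS, (2)~$\tilde\tau'$ remains non-negative, and (3)~the prescribed $\deg_{x_k}$-identity holds simultaneously at every vertex of $N_{h'}|_{\A'}$. Point~(3) follows from the affinity of $\deg_{x_k}$, which reduces the identity to a single linear condition on the new row that is automatically consistent; points~(1) and~(2) amount to choosing the signs of the new row entries so that pairwise sign-skew-symmetry with the original rows is preserved, invoking the freedom highlighted in the footnote to replace $\A'$ by any cluster algebra whose truncation at the $ldim(S)$ original coordinates agrees with the one supplied by Theorem~\ref{results in LP}(iii). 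Once these verifications are in place, a short bookkeeping step folds the four cases into the two clauses of the statement.
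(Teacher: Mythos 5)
Your proposal is correct and follows essentially the same route as the paper: the theorem is obtained there by exactly this four-case analysis (edge parallel to $e_k$; incident but not containing a segment; not incident; nonzero-weight segment but no edge) built on Theorem \ref{results in LP}(iii), the linearity of $deg_{x_k}(\hat{Y}^pX^h)$ in the coordinates, and the rank-enlargement with a prescribed new row of the exchange matrix, with the choice of a suitable $\A\p$ deferred to the footnote just as you do. Your added remarks on consistency of the new row and on steps (b)--(c) respecting projections are harmless elaborations of what the paper leaves implicit.
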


Therefore, by Theorem \ref{mutation of faces}, for any sequence of mutations $\mu=\mu_{i_{r}}\cdots\mu_{i_{1}}$ and any sequence of faces $S_0,\cdots,S_r$ satisfying $S_0$ is a face of $N_h$ and $S_{k}$ is a face of $\mu_{i_k}\cdots\mu_{i_1}(N_{h})$ correlating to $S_{k-1}$ under $\mu_{i_k}$ for any $k\in[1,r]$, there is a cluster algebra $\A\p$ of rank $s$ with principal coefficients at $t_0$, a vector $f\in\Z^s$ and a mutation sequence $\tilde{\mu}=\mu_{j_{r}}\cdots\mu_{j_{1}}$ such that there is a non-negative polytope projection $\tau_k:\mu_{j_k}\cdots\mu_{j_1}(N_{f}|_{\A\p})\rightarrow S_k$ for any $k\in[0,r]$, where $s=\max\{ldim(S_k)|k\in[0,r]\}$ and
\begin{equation*}
  \mu_{j_{k}}=\left\{\begin{array}{ll}
                       \mu_{l}, & \text{if either }\tilde{\tau}_{k-1}(e_l)=e_{i_{k}}\text{ and $S_{k-1}$ contains a segment parallel to }e_{i_k}\\ 
                       & \text{or }\tilde{\tau}_{k}(e_l)=e_{i_{k}}\text{ and $S_{k}$ contains a segment parallel to }e_{i_k};\\
                       \emptyset, & \text{if neither $S_{k-1}$ nor $S_k$ contains a segment parallel to $e_{i_k}$}.
                     \end{array}\right.
\end{equation*}
In general $\tilde{\mu}$ is not uniquely determined by $\mu$ and $S_0$ as there may be more than one faces of $\mu_{i_k}\cdots\mu_{i_1}(N_h)$ which are not isomorphic to each other correlating to $S_{k-1}$ under $\mu_{i_k}$.

\section{Compatibility of cluster variables}
As an application of Theorem \ref{mutation of faces}, we will show the definition of compatibility degree can be generalized to TSSS cluster algebras.

The notion of compatibility degree is introduced by Fomin and Zelevinsky in \cite{FZ2} for skew-symmetric cluster algebras of finite type to testify wether two cluster variables can be contained in a cluster, via a bijection between the set of cluster variables and almost positive roots of the corresponding root system. Later, Ceballos and Pilaud generalize the definition of compatibility degree using the $d$-vector of cluster variables for cluster algebras of finite type. In the latter definition, the key point is to show the entries of a $d$-vector does not depend on the choice of whole cluster (i. e., Theorem \ref{compatibility degree}). In \cite{CL}, Peigong Cao and Fang Li generalize the latter definition for skew-symmetrizable cluster algebras.

\begin{Lemma}\cite{CL}\label{two cluster monomials equal}
  In a TSSS cluster algebra, two cluster monomials $X^{\alpha}_{t}$ and $X^{\beta}_{t\p}$ equal if and only if there is a permutation $\sigma$ of $[1,n]$ such that $\alpha_i=\beta_{\sigma(i)}$ and $x_{i;t}=x_{\sigma(i);t\p}$ for any $i\in\{j\in[1,n]|\alpha_j\neq0\}$.
\end{Lemma}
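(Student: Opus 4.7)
The ``if'' direction is immediate, so the plan focuses on ``only if''. The approach is to compare denominator vectors of $X^\alpha_t$ and $X^\beta_{t'}$ with respect to the initial cluster $X_t$. Denominator vectors are additive on products of cluster variables with non-negative exponents, so $d^t(X^\alpha_t)=-\alpha$ (because $d^t(x_{i;t})=-e_i$) while $d^t(X^\beta_{t'})=\sum_{j=1}^n\beta_j d^t_{j;t'}$. The assumed equality of the two cluster monomials forces these two integer vectors to coincide.

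Fix any $i$ with $\alpha_i>0$ and look at the $i$-th coordinate: $-\alpha_i=\sum_j\beta_j(d^t_{j;t'})_i$. Theorem \ref{results in LP}(iv), applied in the cluster algebra with $X_t$ as its initial seed, guarantees that $(d^t_{j;t'})_i\geqslant -1$ for each $j$, with equality iff $x_{j;t'}=x_{i;t}$. Since the left-hand side is strictly negative while $\beta_j\geqslant 0$, at least one summand must contribute $-\beta_j$; as cluster variables inside $X_{t'}$ are pairwise distinct, there is a unique index $\sigma(i)\in[1,n]$ with $x_{\sigma(i);t'}=x_{i;t}$. Isolating this term yields $-\alpha_i=-\beta_{\sigma(i)}+\sum_{j\neq\sigma(i)}\beta_j(d^t_{j;t'})_i$, and every remaining $(d^t_{j;t'})_i$ is a non-negative integer (it is $\geqslant -1$ and cannot equal $-1$ by uniqueness of $\sigma(i)$). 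Hence $\beta_{\sigma(i)}\geqslant\alpha_i>0$, so $\sigma(i)$ in fact lies in the support of $\beta$.

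Applying the same argument with the roles of $(t,\alpha)$ and $(t',\beta)$ exchanged produces a map $\sigma^{-1}$ from the support of $\beta$ back into the support of $\alpha$ with $\alpha_{\sigma^{-1}(j)}\geqslant\beta_j$. Composing the two inequalities forces $\alpha_i=\beta_{\sigma(i)}$ for every $i$ in the support of $\alpha$ and shows that $\sigma$ is a bijection between the two supports pairing up identical cluster variables. Extending $\sigma$ arbitrarily to a permutation of $[1,n]$ finishes the proof. The only non-bookkeeping input is the sharp lower bound $d_k(\rho_h)\geqslant -1$ together with its equality case for cluster variables, i.e. Theorem \ref{results in LP}(iv); this is the place where the TSSS setting genuinely requires the machinery of \cite{LP}, but once that bound is available the deduction is entirely linear-algebraic.
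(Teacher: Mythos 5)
Your proof is correct. The paper does not prove this lemma itself --- it simply cites \cite{CL} --- and your argument (additivity of denominator vectors over a cluster, the sharp bound $d_k(\rho_h)\geqslant-1$ with equality exactly for $\rho_h=x_k$ from Theorem \ref{results in LP} (iv), and the two-sided coordinate comparison producing $\sigma$ and $\sigma^{-1}$ between the supports) is essentially the standard denominator-vector argument of \cite{CL}, correctly transported to the TSSS setting where the needed input is supplied by \cite{LP}.
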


\begin{Theorem}\label{a straight way}
  In a TSSS cluster algebra $\A$, if there is a subset $I\subseteq [1,n]$, a permutation $\sigma$ of $[1,n]$ and two clusters $X_{t}$ and $X_{t\p}$ in $\A$ satisfying $x_{i;t}=x_{\sigma(i);t\p}$ for any $i\in I$, then there exists a mutation sequence $\mu$ composed by $\{\mu_{i}|i\notin I\}$ such that $\mu(X_{t})=X_{t\p}$ up to a permutation of indices.
\end{Theorem}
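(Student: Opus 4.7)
The overall plan is to use the polytope mutation framework of Theorem~\ref{mutation of faces} to project an arbitrary mutation sequence from $t$ to $t\p$ onto an induced sequence in a sub-cluster algebra of rank $n-|I|$ corresponding to the non-common directions $J := [1,n]\setminus I$. By connectedness of $\T_n$, I would first fix any mutation sequence $\nu = \mu_{i_r}\cdots\mu_{i_1}$ realising $\nu(\Sigma_t) = \Sigma_{t\p}$ up to a permutation, and then show that every $\mu_{i_k}$ with $i_k\in I$ can be dropped without changing the resulting cluster up to permutation.

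To set up the projection, consider the cluster monomial $m = \prod_{l\notin\sigma(I)} x_{l;t\p}$ in $X_{t\p}$ and its Newton polytope $N_h$ with respect to $X_t$, where $h$ is the $g$-vector of $m$. Since the entries of a cluster are algebraically independent, $x_{l;t\p}\neq x_{\sigma(i);t\p}=x_{i;t}$ whenever $l\notin\sigma(I)$ and $i\in I$; combined with Theorem~\ref{results in LP}\,(iv) this ensures that the face $S_0$ of $N_h$ obtained as the appropriate $J$-section is a genuine face, which by Theorem~\ref{results in LP}\,(iii) is identified via a non-negative projection $\tau_0$ satisfying $\tilde\tau_0(e_l)=e_l$ for $l\in J$ with $N_{h\p}|_{\A\p}$, where $\A\p$ is the sub-cluster algebra of rank $|J|$ with initial exchange matrix $B_J:=(b_{ij})_{i,j\in J}$ and $h\p:=\pi_I(h)$. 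Applying $\nu$ collapses $N_h$ to the single point $\sum_{l\notin\sigma(I)}e_l$, so one may choose a chain of correlating faces $S_0,S_1,\ldots,S_r$ ending at a single point and invoke the discussion following Theorem~\ref{mutation of faces} to extract an induced mutation sequence $\tilde\mu = \mu_{j_r}\cdots\mu_{j_1}$ in a sub-cluster algebra of rank $s=\max_k ldim(S_k)$.

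The main obstacle, and where the argument becomes technical, is to select the correlating chain $\{S_k\}$ so that (a) $ldim(S_k)=|J|$ throughout, forcing $s=|J|$ and identifying the induced sub-cluster algebra with $\A\p$, and (b) $\mu_{j_k}$ is empty precisely when $i_k\in I$, while for $i_k\in J$ it is the mutation at the unique index $j_k$ with $\tilde\tau_k(e_{j_k})=e_{i_k}$. I would inductively choose $S_k$ as the face of $\mu_{i_k}\cdots\mu_{i_1}(N_h)$ whose identification with a cluster-monomial polytope in $\A\p$ is compatible with the previous choice $S_{k-1}$ under correlation. The crux is verifying this consistency by analyzing how the linear transformation $\phi$ in the polytope mutation formula acts on the lattice generating set of $S_k$; in particular, when $i_k\in I$ the chosen $S_{k-1}$ carries no edge parallel to $e_{i_k}$, so case (ii) of Theorem~\ref{mutation of faces} applies and yields $S_k\cong S_{k-1}$ with $\mu_{j_k}=\emptyset$.

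Once the chain is in place, $\tilde\mu$ reads back as a mutation sequence $\mu$ in $\A$ using only directions in $J$, so the common variables $x_{i;t}$ with $i\in I$ are fixed in $\mu(X_t)$. Moreover, since $S_r$ is a single point corresponding through the composition of projections to the cluster monomial $m$ in the endpoint cluster of $\tilde\mu$, the cluster $\mu(X_t)$ contains $m$ as a cluster monomial whose non-common factors are exactly $\{x_{l;\mu(t)}\mid l\in J\}$. Lemma~\ref{two cluster monomials equal} then forces $\{x_{l;\mu(t)}\mid l\in J\}=\{x_{l;t\p}\mid l\notin\sigma(I)\}$ as sets, giving $\mu(X_t)=X_{t\p}$ up to a permutation of indices.
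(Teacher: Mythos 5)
Your overall strategy --- use Theorem \ref{mutation of faces} to project the mutation sequence onto a rank-$(n-|I|)$ sub-cluster algebra and drop the mutations in directions belonging to $I$ --- is the same as the paper's, but your choice of test monomial creates a genuine gap. You take $m=\prod_{l\notin\sigma(I)}x_{l;t\p}$, whose Newton polytope $N_h$ with respect to $X_t$ in general has $d_i(m)>0$ for $i\in I$ and therefore does \emph{not} lie in the coordinate subspace $\bigcap_{i\in I}\{z_i=0\}$; you are forced to track only the $J$-section $S_0$ of $N_h$. This causes two problems. First, your key assertion that the chosen face $S_{k-1}$ never contains a segment parallel to $e_{i_k}$ when $i_k\in I$ (so that the induced mutation is empty) is exactly the unproved crux: once the ambient polytope has positive extent in the $I$-directions, the correlating faces under the intermediate mutations can acquire segments in those directions, and you give no argument ruling this out. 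Second, and more seriously, even granting the chain, the collapse of the \emph{section} $S_r$ to a point does not imply that $m$ is a cluster monomial in $\mu(X_t)$; only the collapse of the \emph{whole} polytope $\mu(N_h)$ would give that, and you never establish it, so the final appeal to Lemma \ref{two cluster monomials equal} has no input.

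The paper avoids both problems with a different choice of $h$: it takes $\rho_h$ to be a cluster monomial in \emph{all} of $X_{t\p}$ with strictly positive exponents, the exponents on the common variables $x_{\sigma(i);t\p}$ chosen large enough that $N^t_h$ lies entirely in $\bigcap_{i\in I}\{z_i=0\}$. Then the tracked face is the whole polytope; mutations in $J$-directions keep it in that subspace (each of the three mutation steps alters only the mutated coordinate); the first mutation in an $I$-direction provably induces the empty mutation; and the argument is iterated --- drop that one mutation, obtain a strictly shorter sequence with the same polytope data, and repeat --- rather than run in a single pass, which sidesteps the consistency verification you flag as the crux. Because the whole polytope collapses under the final sequence, $\rho_h$ is a cluster monomial in $\tilde{\mu}(X_t)$, and since every variable of $X_{t\p}$ occurs in it with positive exponent, Lemma \ref{two cluster monomials equal} immediately yields $\tilde{\mu}(X_t)=X_{t\p}$ up to permutation. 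If you replace your $m$ by this monomial and adopt the iterative reduction, your argument goes through.
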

\begin{proof}
  Let $\mu=\mu_{i_r}\cdots\mu_{i_1}$ be a mutation sequence such that $\mu(X_{t})=X_{t\p}$. It is trivial if $i_s\notin I$ for any $s\in[1,r]$. So assume $i_s\in I$ for some $s\in[1,r]$. Let $h\in\Z^n$ satisfy that $L^{t\p}(\rho^t_h)=\rho^{t\p}_{h^{t\p}}$ with $h^{t\p}\in\Z_{>0}^n$ and $N^{t}_{h}$ lies in the hyperplanes $z_i=0$ for any $i\in I$. This can be done by choosing $h^{t\p}=(h_1^{t\p},\cdots,h_n^{t\p})^\top\in\Z_{>0}^n$ with $h_{\sigma(i)}^{t\p}$ large enough for any $i\in I$.

  Then $N^{t\p}_{h^{t\p}}=\mu(N^t_h)$ equals the origin. By Theorem \ref{mutation of faces}, for a sequence of faces $N^t_h=S_0,\cdots,S_r$ such that $S_j$ is a face of $\mu_{i_j}\cdots\mu_{i_1}(N^t_h)$ correlating to $S_{j-1}$ under $\mu_{i_{j+1}}$ for $j\in[1,r]$, $\mu$ induces a sequence of mutations $\tilde{\mu}$ in some cluster algebra $\A\p$ of rank $s=\max\{ldim(S_j)|j\in[0,r]\}\geqslant n-|I|$ with principal coefficients and a vector $h\p\in\Z^s$ such that $N_{h\p}|_{\A\p}=N^t_h$, $\tilde{\mu}(N_{h\p})|_{\A\p}$ equals the origin and the matrix obtained from the initial exchange matrix of $\A\p$ by deleting rows and columns parameterized by indices in $I$ equals that obtained from $B_{t}$ by deleting rows and columns parameterized by indices in $I$.

  Denote $s=\min\{j\in[1,r]|i_j\in I\}$. Following from Theorem \ref{results in LP} (iii) and Theorem \ref{mutation of faces}, $S_j=\mu_{i_j}\cdots\mu_{i_1}(N^t_h)$ when $j\in[1,s-1]$. If we in particular choose $S_s$ to be the face correlating to $S_{s-1}$ with $dim(S_s)=dim(S_{s-1})$, according to Theorem \ref{results in LP} (iii) and Theorem \ref{mutation of faces}, $\tilde{\mu}_{i_s}=\emptyset$, which means in this case the length of $\tilde{\mu}$ is strictly less than that of $\mu$.

  Therefore, for a mutation sequence $\mu$ in $\A$ such that $\mu(N^t_h)|_{\A}$ equals the origin, once there is $j\in[1,r]$ satisfying $i_j\in I$, we could find a mutation sequence $\tilde{\mu}$ of strictly less length in $\A\p$ and a vector $f$ such that $N^t_f|_{\A\p}=N^t_h|_{\A}$ lies in the hyperplanes $z_i=0$ for any $i\in I$ when embedded in $\R^n$, $\tilde{\mu}(N^t_f)|_{\A\p}$ equals the origin and the matrix obtained from the initial exchange matrix of $\A\p$ by deleting rows and columns parameterized by indices in $I$ equals that obtained from $B_{t}$ by deleting rows and columns parameterized by indices in $I$.

  Since the length of $\mu$ is finite, so iteratively we could finally get a mutation sequence $\tilde{\mu}$ in $\A\p$ composed by $\{\mu_{i}|i\notin I\}$ and a vector $f$ such that $N^t_f|_{\A\p}=N^t_h|_{\A}$ lies in the hyperplanes $z_i=0$ for any $i\in I$ when embedded in $\R^n$, $\tilde{\mu}(N^{t}_{f})|_{\A\p}$ equals the origin and the initial exchange matrix of $\A\p$ equals the matrix obtained from $B_{t}$ by deleting rows and columns parameterized by indices in $I$. Hence $\tilde{\mu}(N^t_h)|_{\A}$ also equals the origin, which means $\rho^t_h$ is a cluster monomial both in $X_{t\p}$ and in $\tilde{\mu}(X_t)$. Then following from the assumption $h^{t\p}\in\Z_{>0}^n$ and Lemma \ref{two cluster monomials equal}, $\tilde{\mu}(X_{t})=X_{t\p}$ up to a permutation of indices.
\end{proof}

For any $f\in\mathcal{U(A)}$ and any cluster $X_t$, $f$ can be expressed as a Laurent polynomial in $X_t$, say $f=\frac{P^t(f)}{X_t^{d^t(f)}}$, where $P^t(f)\in\Z\P[X_t]$ can not be divided by any cluster variable in $X_t$ and $d^t(f)=(d_{x_{1;t}}^t(f),\cdots,d_{x_{n;t}}^t(f))\in\Z^n$. It is proved amazingly that as a corollary of Theorem \ref{a straight way}, $d_{x_{i;t}}^t(f)$ only depends on $f$ and $x_{i;t}$ but $X_t$ for skew-symmetrizable cluster algebras in \cite{CL}. The argument remains valid for TSSS cluster algebras as Theorem \ref{a straight way} has been generalized to TSSS cluster algebras.

\begin{Theorem}\label{compatibility degree}
  In a TSSS cluster algebra $\A$, for any $f\in\mathcal{U(A)}$ and two different clusters $X_t$ and $X_{t\p}$ both containing a cluster variable $x$, $d_{x}^t(f)=d_{x}^{t\p}(f)$.
\end{Theorem}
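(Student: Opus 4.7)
The plan is to reduce the statement to the single-mutation case by invoking Theorem~\ref{a straight way}, and then observe that a mutation which does not touch $x$ cannot alter the $x$-degree of the denominator. Concretely, let $i\in[1,n]$ be the index with $x_{i;t}=x$ and let $i\p\in[1,n]$ be the index with $x_{i\p;t\p}=x$. Applying Theorem~\ref{a straight way} to $I=\{i\}$ (with the permutation $\sigma$ sending $i$ to $i\p$), I obtain a mutation sequence $\mu=\mu_{i_r}\cdots\mu_{i_1}$, none of whose directions lie in $I$, such that $\mu(X_t)=X_{t\p}$ up to a permutation of indices. In particular, every intermediate seed $X_{t(s)}$ with $t(s)=\mu_{i_s}\cdots\mu_{i_1}(t)$ still contains $x$ in position $i$, since mutations in directions $\neq i$ do not alter the $i$-th cluster variable.

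Next I would check that along each elementary mutation $\mu_{i_s}\colon X_{t(s-1)}\rightsquigarrow X_{t(s)}$ with $i_s\neq i$, the $x$-denominator exponent of $f$ is preserved. Write
\[
f=\frac{P^{t(s-1)}(f)}{X_{t(s-1)}^{d^{t(s-1)}(f)}},
\]
with $P^{t(s-1)}(f)$ not divisible by any variable of $X_{t(s-1)}$. Passing to the expression in $X_{t(s)}$ is effected by the substitution coming from~(\ref{equation: mutation of x}) in the variable $x_{i_s;t(s-1)}$; the remaining variables, including $x=x_{i;t(s-1)}=x_{i;t(s)}$, are not involved. Consequently, the Laurent expansion of $f$ in $X_{t(s)}$ has the same lowest power of $x$ as its expansion in $X_{t(s-1)}$, so $d_x^{t(s-1)}(f)=d_x^{t(s)}(f)$. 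Indeed, any monomial $X_{t(s-1)}^\alpha$ that appears in the expansion of $f$ becomes, after the substitution, a sum of monomials whose $x$-exponent coincides with $\alpha_i$, since the exchange binomial $M_{i_s;t(s-1)}$ is a polynomial in $\{x_{j;t(s-1)}:j\neq i_s\}$ and $x$ occurs among these with the same exponent on every term; no cancellation in $x$ can arise because $P^{t(s-1)}(f)$ has no factor of $x$ in its denominator-free part.

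Iterating this invariance along $\mu$ gives $d_x^t(f)=d_x^{t\p}(f)$, since the terminal seed $\mu(X_t)$ equals $X_{t\p}$ up to a permutation that, by the previous paragraph, necessarily matches the position of $x$ in $X_t$ with its position in $X_{t\p}$. I expect the main subtlety to lie in the careful bookkeeping of the permutation $\sigma$ and in verifying that no hidden $x$-cancellation can drop the minimal $x$-exponent during a single mutation; both issues are handled by the observation above that $x$ is never one of the variables being mutated and hence appears with identical exponent on every term produced by the binomial substitution, so the lowest $x$-power after the substitution is exactly the lowest $x$-power before it.
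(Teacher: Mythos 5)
Your overall route is the same as the paper's: use Theorem \ref{a straight way} to replace the passage from $X_t$ to $X_{t\p}$ by a mutation sequence avoiding the position of $x$, then reduce to a single such mutation and argue that it leaves the $x$-exponent of the denominator unchanged. The reduction is fine. The gap is in the single-mutation step, whose justification rests on a false assertion: you claim that $x$ ``occurs with the same exponent on every term'' of the exchange binomial. Writing $k=i_s$ and $x=x_{i;t(s-1)}$ with $i\neq k$, the binomial is $M_{k;t(s-1)}=\prod_j x_{j;t(s-1)}^{[b_{jk}]_+}+\prod_j x_{j;t(s-1)}^{[-b_{jk}]_+}$, and its two monomials carry $x$-exponents $[b_{ik}]_+$ and $[-b_{ik}]_+$, which differ whenever $b_{ik}\neq 0$. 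Hence a monomial of $f$ with exponent vector $\alpha$ does not become a sum of monomials all having $x$-exponent $\alpha_i$; it spawns monomials whose $x$-exponents range from $\alpha_i$ upward. Your conclusion that the lowest $x$-power is preserved therefore needs a real argument: it cannot decrease, because the extra exponents contributed by $M_{k;t(s-1)}$ are non-negative, but showing it cannot increase means ruling out that every monomial realizing the old minimum cancels, and your parenthetical about $P^{t(s-1)}(f)$ having ``no factor of $x$'' does not do that. The paper's proof handles this by grading by $x_k$-degree: the pieces $x_{k;t}^{D-s}M_{k;t}^{[s-d]_+}P_s^{t\p}(f)$ of the transformed numerator have pairwise distinct $x_k$-degrees and so cannot cancel against one another, and since $x_{i;t}\nmid M_{k;t}$ at least one piece stays non-divisible by $x_{i;t}$.

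A second, related omission: when the mutated variable $x_{k;t(s-1)}$ occurs in the denominator of $f$, the naive term-by-term substitution of (\ref{equation: mutation of x}) produces negative powers of the exchange binomial, and one cannot read off the new denominator vector without first using that the $x_k$-graded pieces of the numerator are divisible by suitable powers of $M_k$ --- this is exactly the decomposition $P^{t\p}(f)=\sum\limits_{s} x_{k;t\p}^s M_{k;t}^{[d_{x_{k;t\p}}^{t\p}(f)-s]_+}P_s^{t\p}(f)$ that the paper's proof sets up and that your write-up skips. Once these two points are supplied, your argument coincides with the paper's.
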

\begin{proof}
  Assume $x=x_{l;t}$. According to Theorem \ref{a straight way}, there is a mutation sequence $\mu$ consisting of mutations at directions other than $l$ satisfying that $\mu(X_t)$ equals $X_{t\p}$ up to a permutation of indices. So without loss of generality we assume $X_{t\p}=\mu(X_t)$ and $x=x_{l;t}=x_{l;t\p}$.

  It is enough to show the result holds when $\mu$ is a single mutation, then an induction on the length of $\mu$ completes the proof.

  Assume $t$ and $t\p$ are connected by an edge labeled $k\neq l$ and $f=\frac{P^t(f)}{X_t^{d^t(f)}}=\frac{P^{t\p}(f)}{X_{t\p}^{d^{t\p}(f)}}$. According to (\ref{equation: mutation of x}), $x_{i;t\p}=x_{i;t}$ when $i\neq k$ and $x_{k;t\p}=\frac{M_{k;t}}{x_{k;t}}$, where $M_{k;t}=X_{t}^{[b_{k}^t]_+}+X_{t}^{[-b_{k}^t]_+}\in\N\P[X_t\setminus\{x_{k;t}\}]$. As $f\in\mathcal{U(A)}$, we can denote $P^{t\p}(f)=\sum\limits_{s=0}^{deg_{x_{k;t\p}}(P^{t\p}(f))}x_{k;t\p}^sM_{k;t}^{[d_{x_{k;t\p}}^{t\p}(f)-s]_+}P_s^{t\p}(f)$, where $P_s^{t\p}(f)\in\Z\P[X_{t\p}\setminus\{x_{k;t\p}\}]$. Then as a Laurent polynomial in $\Z\P[X_t^{\pm1}]$,
  \[\frac{P^t(f)}{X_t^{d^t(f)}}=\frac{P^{t\p}(f)}{X_{t\p}^{d^{t\p}(f)}}|_{x_{k;t\p}\rightarrow\frac{M_{k;t}}{x_{k;t}}}= \frac{\sum\limits_{s=0}^{deg_{x_{k;t\p}}(P^{t\p}(f))}x_{k;t\p}^{deg_{x_{k;t\p}}(P^{t\p}(f))-s}M_{k;t}^{[s-d_{x_{k;t\p}}^{t\p}(f)]_+}P_s^{t\p}(f)} {x_{k;t}^{deg_{x_{k;t\p}}(P^{t\p}(f))-d_{x_{k;t\p}}^{t\p}(f)}\prod\limits_{i\neq k}x_{i;t}^{d_{x_{i;t\p}}^{t\p}(f)}}.\]
  In particular, $d_{x}^t(f)=d_{x}^{t\p}(f)$ as $l\neq k$.
\end{proof}

Therefore, as constructed in \cite{CL}, we have a map called \textbf{compatibility degree}
\[(-|-):\quad \mathcal{U(A)}\times\mathcal{S}\quad \longrightarrow \quad\Z\]
such that $(f|x)=d_{x}^t(f)$ for any $f\in\mathcal{U(A)}$ and $x\in\mathcal{S}$, where $t$ is an arbitrary vertex satisfying $X_t$ contains $x$. The next proposition explains why it is so named.
\begin{Proposition}\label{compatibility degree equals 0}
  Let $\A$ be a TSSS cluster algebra with two cluster variables $x$ and $x\p$, then $(x\p|x)\geqslant-1$. Moreover, $(x\p|x)=-1$ if and only if $x\p=x$ while $(x\p|x)=0$ if and only if $x\p\neq x$ and there is a cluster containing both variables.
\end{Proposition}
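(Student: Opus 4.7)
The bound $(x'|x)\geqslant-1$ and the characterization $(x'|x)=-1\iff x'=x$ follow directly from Theorem~\ref{results in LP}(iv) once the compatibility degree is evaluated at a cluster containing $x$. Picking any $X_t$ with $x=x_{i;t}$, Theorem~\ref{compatibility degree} identifies $(x'|x)$ with $d^t_{x_{i;t}}(x')$, and Theorem~\ref{results in LP}(iv) then gives $d^t_{x_{i;t}}(x')\geqslant-1$ with equality precisely when $x'=x_{i;t}=x$. The forward direction of the second equivalence is likewise immediate: if a cluster $X_t$ contains both $x=x_{i;t}$ and $x'=x_{j;t}$ with $i\neq j$, then the Laurent expression of $x'$ in $X_t$ is the monomial $x_{j;t}$, so $d^t(x')=-e_j$ and hence $(x'|x)=0$.

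For the reverse direction, I would fix a cluster $X_t$ with $x=x_{i;t}$, set $g=g^t(x')$, and record that by Theorem~\ref{results in LP}(iv) the polytope $N=N^t_g$ has no edges parallel to $e_i$. The plan is to induct on the minimum length $r$ of a mutation sequence $\mu=\mu_{i_r}\cdots\mu_{i_1}$ with $x'\in\mu(X_t)$. The base case $r=0$ means $x'\in X_t$ and is done. In the inductive step the easy subcase is $i_1\neq i$: the cluster $\mu_{i_1}(X_t)$ still contains $x=x_{i;\mu_{i_1}(t)}$, and by Theorem~\ref{compatibility degree} the value $(x'|x)=0$ is unchanged when computed at $\mu_{i_1}(t)$, so the inductive hypothesis applied to $\mu_{i_1}(X_t)$ produces a cluster containing both. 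The difficult subcase is $i_1=i$: here I would invoke Theorem~\ref{mutation of faces} with $k=i$ applied to $N$ (and, where appropriate, to a facet of $N$ accessed via Theorem~\ref{results in LP}(iii)), to exhibit, using that $N$ has no edges parallel to $e_i$, a rerouted minimal sequence whose first mutation is in some direction $j\neq i$, thereby reducing to the easy subcase. Once a cluster $X_{t''}$ containing both $x$ and $x'$ is produced, Theorem~\ref{a straight way} yields the sharper fact that $X_{t''}$ is reachable from $X_t$ by mutations avoiding direction $i$.

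The main obstacle is the rerouting in the subcase $i_1=i$: the hypothesis that $N$ has no edges parallel to $e_i$ does not by itself exclude segments parallel to $e_i$ inside $N$, so cases (i) and (ii) of Theorem~\ref{mutation of faces} must be handled separately. In case (ii), where $N$ contains no segment parallel to $e_i$, the polytope is preserved up to isomorphism by $\mu_i$, so the mutation is essentially decorative and $\mu_i$ can be deferred or removed from the head of the sequence. In case (i), one would descend via Theorem~\ref{results in LP}(iii) to a sub-cluster algebra of rank at most $n-1$ realizing the relevant face of $N$, produce the desired alternative first mutation there, and lift it back to $\A$. Verifying this dichotomy cleanly, and showing that the alternative sequence produced has length at most $r$, is the core technical step on which the entire reverse direction hinges.
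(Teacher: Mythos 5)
Your handling of $(x\p|x)\geqslant-1$, of the equality case $(x\p|x)=-1$, and of the forward implication of the last equivalence coincides with the paper's. The reverse implication is where the proposal stops short, and the obstacle you flag in your final paragraph is a genuine gap rather than a routine verification. First, from Theorem \ref{results in LP}(iv) you record only that $N=N^t_{g^t(x\p)}$ has no \emph{edge} parallel to $e_i$; the paper works with the stronger conclusion that $N$ lies entirely in the hyperplane $z_i=0$. That stronger fact is what makes the whole polytope an $([1,n]\setminus\{i\})$-section, hence a face to which Theorem \ref{results in LP}(iii) applies with a rank-$(n-1)$ sub-cluster algebra $\A\p$ (initial matrix $B_t$ with the $i$-th row and column deleted), and it is also what excludes interior segments parallel to $e_i$ --- precisely the dichotomy you say must be ``handled separately'' but do not resolve.

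Second, your plan for the subcase $i_1=i$ --- rerouting to a minimal sequence whose \emph{first} mutation avoids $i$ --- is not what Theorem \ref{mutation of faces} delivers, and it is not how the paper argues. The paper reruns the descent from the proof of Theorem \ref{a straight way} with $I=\{i\}$: since $N$ lies in $z_i=0$, any occurrence of $\mu_i$ \emph{anywhere} in $\mu$ (not just at the head) induces the empty mutation on the corresponding face, so $\mu$ induces a strictly shorter sequence $\tilde{\mu}$ in $\A\p$ that still sends the polytope to the origin; iterating this until no mutation in direction $i$ remains produces a sequence avoiding $i$ altogether, which by Lemma \ref{two cluster monomials equal} reaches a cluster containing $x\p$ while retaining $x$. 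Your induction on minimal length could be repaired along these lines --- the descent shows in effect that a minimal sequence reaching $x\p$ never uses $\mu_i$ at all --- but that descent is exactly the content you have deferred, so as written the implication $(x\p|x)=0\Rightarrow$ existence of a common cluster is not established.
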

\begin{proof}
  By definition $(x\p|x)$ is the corresponding entry of a $d$-vector, hence according to Theorem \ref{results in LP} (iv) $(x\p|x)\geqslant-1$ and the equality holds if and only if $x\p=x$.

  If $x\p\neq x$ and there is a cluster containing both variables, then obviously $(x\p|x)=0$. Next we assume $(x\p|x)=0$. Choose a cluster $X_t$ containing $x$ and a cluster $X_{t\p}$ containing $x\p$. Denote by $g^t_{x\p}$ the $g$-vector of $x\p$ with respect to $X_t$ and by $\mu$ a mutation sequence such that $\Sigma_{t\p}=\mu(\Sigma_t)$. Hence $\mu(N^t_{g^t_{x\p}})$ is the origin.

  Since $(x\p|x)=0$, by Theorem \ref{results in LP} (iv), $N^t_{g^t_{x\p}}$ lies in the hyperplane $z_l=0$ for $l$ satisfying $x_{l;t}=x$. Therefore, analogous to the proof of Theorem \ref{a straight way}, we can find a mutation sequence $\tilde{\mu}$ consisting of mutations in $\{\mu_j|j\neq l\}$, a cluster algebra $\A\p$ of rank $n-1$ with principal coefficients and a vector $f=\pi_l(g^t_{x\p})$ such that $N^t_{f}|_{\A\p}=N^t_{g^t_{x\p}}$, $\tilde{\mu}(N^t_f)|_{\A\p}$ is the origin and the initial exchange matrix of $\A\p$ equals the matrix obtained from $B_t$ by deleting the $l$-th column and row. Hence we also have $\tilde{\mu}(N_{g^t_{x\p}})|_{\A}$ is the origin, which means $\tilde{\mu}(\Sigma_t)$ contains $x\p$ due to Lemma \ref{two cluster monomials equal}. On the other hand, since $\mu_l$ never appears in $\tilde{\mu}$, $\tilde{\mu}(\Sigma_t)$ contains $x$ as well.
\end{proof}

\section{Compatibility of polytope functions}
Although the compatibility degree defined via $d$-vectors does not depend on the choice of cluster as shown in the last section, it does rely on clusters. In this section, based on the discussion of last section and several results in \cite{LP} for polytope functions, we will define the compatibility of polytope functions and then show that it is equivalent to the original compatibility when restricted to cluster variables. Because the former definition does not rely on clusters at all, it provides a new intrinsic perspective to look at compatibility of cluster variables and unistructurality of cluster algebras, which is conjectured by Assem, Schiffler and Shramchenko in \cite{ASS} and proved for skew-symmetrizable cluster algebras by Peigeng Cao and Fang Li in \cite{CL2}.

\begin{Definition}
  We say two polytope functions $\rho_h$ and $\rho_{h\p}$ are \textbf{compatible} with each other if $\rho_h\rho_{h\p}=\rho_{h+h\p}$.
\end{Definition}

The following proposition shows the compatibility of polytope functions coincides with that of cluster variables when restricted to cluster variables.
\begin{Proposition}\label{compatibility degree and multiplication}
  In a cluster algebra $\A$, let $g$ be a $g$-vector associated to some cluster variable and $h\in\Z^n$. Then $\rho_g\rho_{h}=\rho_{g+h}$ if and only if $(\rho_{h}|\rho_g)\leqslant0$.
\end{Proposition}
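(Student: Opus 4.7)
The plan is to reduce to the case where $\rho_g$ is an initial cluster variable, reformulate the equation $\rho_g\rho_h=\rho_{g+h}$ as an equality of weighted Newton polytopes, and then use Theorem~\ref{results in LP}(iv) to read off $d_l(\rho_h)$ from the edges of $N_h$.

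By Theorem~\ref{results in LP}(ii) there exist $t\in\T_n$ and $l\in[1,n]$ with $\rho_g=x_{l;t}$. Since the equation $\rho_g\rho_h=\rho_{g+h}$ and the compatibility degree $(\rho_h|\rho_g)$ are both intrinsic to the upper cluster algebra, I apply $L^t$ to move the initial seed to $t$; in the new frame $\rho_g$ becomes the initial cluster variable $x_{l;t}$, so without loss of generality I may assume $t=t_0$, $g=e_l$ and $\rho_g=x_l$. The statement to prove becomes
\[x_l\rho_h=\rho_{h+e_l}\qquad\Longleftrightarrow\qquad d_l(\rho_h)\leqslant 0.\]
Writing $\rho_h=f(\hat Y)X^h$ where $f$ is a formal power series with Newton polytope $N_h$, the product $x_l\rho_h=f(\hat Y)X^{h+e_l}$ is a homogeneous Laurent polynomial of degree $h+e_l$ with exactly the same $\hat Y$-support and weights as $\rho_h$, so its Newton polytope (relative to degree $h+e_l$) is again $N_h$. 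Thus $x_l\rho_h=\rho_{h+e_l}$ is equivalent to the identity $N_h=N_{h+e_l}$ of weighted polytopes.

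For the necessity direction, suppose $x_l\rho_h=\rho_{h+e_l}$. Comparing $x_l$-denominator degrees immediately gives $d_l(\rho_{h+e_l})=d_l(\rho_h)-1$. By Theorem~\ref{results in LP}(iv), both $[d_l(\rho_h)]_+$ and $[d_l(\rho_{h+e_l})]_+$ equal the maximal length of an edge of the common polytope $N_h=N_{h+e_l}$ parallel to $e_l$, so they must coincide. But the identity $[x]_+=[x-1]_+$ forces $x\leqslant 0$, hence $d_l(\rho_h)\leqslant 0$.

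For the sufficiency direction, suppose $d_l(\rho_h)\leqslant 0$, so by Theorem~\ref{results in LP}(iv) the polytope $N_h$ has no edges parallel to $e_l$. I would then show $N_{h+e_l}=N_h$ by running Construction~3.15 of \cite{LP} in parallel for $h$ and for $h+e_l$ and tracking how the construction depends on $h_l$: when no $e_l$-parallel edges appear, each $l$-section collapses to a single lattice point and the binomial-coefficient weights (which govern how the construction responds to a shift in the $l$-th coordinate of the grading) reduce to $1$, so the change $h\mapsto h+e_l$ leaves every weight produced by the construction untouched. The main obstacle will be verifying this stability throughout the inductive construction of \cite{LP}; a clean way to organise it is to induct on the rank, using Theorem~\ref{results in LP}(iii) to identify each face of $N_h$ not incident to an $e_l$-parallel edge with the Newton polytope of a lower-rank polytope function to which the inductive hypothesis applies, and to check that the ``no $e_l$-parallel edges'' condition transfers to these sub-polytopes so that the induction closes.
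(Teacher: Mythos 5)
Your reduction to $\rho_g=x_l$ in the initial cluster and your necessity argument are sound and essentially identical to the paper's: the paper likewise observes that the Newton polytope of $x_l\rho_h$ is still $N_h$, that $[d_l(x_l\rho_h)]_+=[d_l(\rho_h)-1]_+$, and that Theorem \ref{results in LP} (iv) then forces $d_l(\rho_h)\leqslant0$ if the product is to be a polytope function.

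The sufficiency direction, however, is where your proposal has a genuine gap. What you offer is a plan --- ``run Construction 3.15 of \cite{LP} in parallel for $h$ and $h+e_l$ and track the dependence on $h_l$'' --- and you yourself flag the key verification as an unresolved obstacle, so nothing is actually proved. Moreover, the first step of that plan already needs more than Theorem \ref{results in LP} (iv) gives you: for a general polytope, ``no edge parallel to $e_l$'' does not imply that every $l$-section is a single point (a triangle with vertices $(0,0)$, $(1,1)$, $(2,0)$ has a section parallel to $e_2$ of length $1$ but no edge parallel to $e_2$), so the claimed collapse of all $l$-sections requires a structural fact about $N_h$ that you have not supplied. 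The paper avoids this entire computation: it invokes the stronger fact that $d_l(\rho_h)\leqslant0$ forces $N_h$ to coincide with its $[1,n]\setminus\{l\}$-section at the origin (so $N_h$ lies in the hyperplane $z_l=0$), deduces from this that $x_l\rho_h$ remains universally indecomposable, and then concludes by citing the result of \cite{LP} that $\rho_{h+e_l}$ is a universally indecomposable summand of $x_l\rho_h$, whence the two are equal. If you want to keep your polytope-identity formulation $N_h=N_{h+e_l}$, you would still need to import these structural inputs from \cite{LP} rather than re-derive the construction; as written, the sufficiency half is not a proof.
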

\begin{proof}
  According to Theorem \ref{results in LP} (i) and the definition of compatibility degree, this proposition does not depend on the choice of the initial seed. So we may assume $\rho_g$ is contained in the initial cluster as $x_i$.

  If $(\rho_{h}|\rho_g)>0$, then $[d_{\rho_g}(\rho_g\rho_{h})]_+=[d_{\rho_g}(\rho_{h})-1]_+=d_{\rho_g}(\rho_{h})-1$. Moreover the Newton polytope of $\rho_g\rho_h$ is $N_{g}\oplus N_{h}=N_{h}$. Hence $\rho_{g}\rho_{h}$ does not satisfies Theorem \ref{results in LP} (iv), which means it is not a polytope function, particularly it can not be $\rho_{g+h}$. This leads to the necessity.

  On the other hand, when $(\rho_{h}|\rho_g)\leqslant0$, $N_{h}$ equals its $[1,n]\setminus\{i\}$-section at the origin, hence $\rho_g\rho_{h}$ is still universally indecomposable. According to \cite{LP}, $\rho_{g+h}$ is universally indecomposable and it is a summand of $\rho_{g}\rho_{h}$, so $\rho_g\rho_{h}=\rho_{g+h}$.
\end{proof}
Unistructurality conjecture claims cluster structure can be uniquely determined by the set consisting of cluster variables, which is pretty natural and direct to see through the compatibility of polytope functions since compatibility of two polytope functions $\rho_h$ and $\rho_{h\p}$ only depends on wether $\rho_h\rho_{h\p}$ equals $\rho_{h+h\p}$.

\begin{Theorem}\label{unistructurality conjecture}
  Two cluster algebras with the same cluster variables have the same cluster structure.
\end{Theorem}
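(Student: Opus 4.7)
The plan is to reduce the theorem to showing that the compatibility relation on $\mathcal{S}$ is intrinsic to $\mathcal{S}$ as a subset of the ambient field $\F$. By Proposition \ref{compatibility degree equals 0}, distinct cluster variables $x, x' \in \mathcal{S}$ lie in a common cluster if and only if $(x'|x) \leq 0$, and a cluster is precisely a maximal pairwise compatible subset of $\mathcal{S}$ of size $n$. Hence the cluster structure (the set of clusters) is determined by this relation, and it suffices to show the relation depends only on $\mathcal{S}$.

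Next I would invoke Proposition \ref{compatibility degree and multiplication}: writing $x = \rho_g$ and $x' = \rho_{g'}$ with respect to a chosen initial seed, compatibility $(x'|x) \leq 0$ translates into the algebraic identity $xx' = \rho_{g+g'}$ in $\F$. The left side is intrinsic, being just the product of $x$ and $x'$ in $\F$; the right side is a specific polytope function, constructed in \cite{LP} directly from the vector $g + g'$ and the initial exchange matrix. This identity replaces the cluster-dependent notion of compatibility degree with a concrete polynomial equation in $\F$.

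The crux is then to check that validity of the identity $xx' = \rho_{g+g'}$ depends only on $\mathcal{S}$. For two cluster algebras $\A$ and $\A'$ sharing $\mathcal{S}$, one picks any cluster of $\A$ as initial seed; since this cluster is a subset of $\mathcal{S}$, it generates $\F$ freely, and the Newton-polytope construction of \cite{LP} produces $\rho_{g+g'}$ as an explicit element of $\F$. The identity can therefore be tested by expanding $xx'$ in the chosen cluster and comparing with the explicit Laurent polynomial $\rho_{g+g'}$. Using Theorem \ref{results in LP}(ii), which asserts invariance of polytope functions under changes of initial seed, the resulting compatibility relation is independent of the seed chosen for the test, and the verification yields the same answer in $\A$ and $\A'$. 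Consequently the compatibility relations coincide, so the maximal compatible subsets of $\mathcal{S}$ coincide, and the cluster structures agree.

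The main obstacle is reconciling the intrinsic nature of the identity with the fact that the construction of $\rho_{g+g'}$ and the $g$-vectors $g, g'$ a priori depend on an initial seed of the particular cluster algebra. This is what produces apparent circularity: to test compatibility in $\A'$ at a seed of $\A$, one needs to know the exchange matrix of $\A'$ at that seed, which is part of the structure one is trying to recover. The workaround is bootstrapping — one first checks pairwise compatibility directly from Laurent expansions in $\F$, which only requires $\mathcal{S}$ and a single free generating set; once pairwise compatibility is pinned down, the clusters are determined as maximal compatible $n$-subsets, and the exchange matrices are subsequently determined by the mutation rules applied to any chosen cluster. In this way the entire cluster structure is recovered from $\mathcal{S}$ alone.
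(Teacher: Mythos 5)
Your core argument is the paper's own: the proof in the text is literally ``This follows from Theorem \ref{compatibility degree equals 0} and Proposition \ref{compatibility degree and multiplication}'', i.e.\ one converts ``$x$ and $x'$ lie in a common cluster'' first into ``$(x'|x)\leqslant 0$'' and then into the multiplicative identity $xx'=\rho_{g+g'}$, which the paper asserts is a condition that ``does not rely on clusters''. Your first two paragraphs reproduce exactly this reduction, so the approach is the same.

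Where your write-up goes beyond the paper --- and where it does not actually close the argument --- is the final ``bootstrapping'' paragraph. You correctly locate the real difficulty: $\rho_{g+g'}$ is built from a seed of a \emph{particular} cluster algebra (its exchange matrix, not just its cluster), and the vectors $g,g'$ are likewise seed-dependent, so a priori $xx'=\rho_{g+g'}$ is a statement internal to one of the two algebras being compared. But your proposed escape --- ``one first checks pairwise compatibility directly from Laurent expansions in $\F$, which only requires $\mathcal{S}$ and a single free generating set'' --- is not substantiated and reintroduces the circle you just identified: the compatibility degree $(x'|x)=d_x^t(x')$ is defined via the Laurent expansion in a cluster \emph{containing} $x$, and Theorem \ref{compatibility degree} only guarantees independence of that choice among clusters of the \emph{same} algebra; an arbitrary free generating set will not do, and deciding which $n$-subsets of $\mathcal{S}$ are clusters is precisely what is at stake. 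So the step you yourself call ``the crux'' is named but not proved. (In fairness, the paper's one-line proof leaves exactly the same point implicit.) A secondary unproved assertion is that a cluster is ``precisely a maximal pairwise compatible subset of $\mathcal{S}$ of size $n$'': Proposition \ref{compatibility degree equals 0} only treats pairs, and the claim that a maximal pairwise-compatible family is a cluster (equivalently, that nothing outside a cluster is compatible with all of its members) needs an argument, e.g.\ via Theorem \ref{results in LP}(iv) applied to the $d$-vector of such an $x'$.
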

\begin{proof}
  This follows from Theorem \ref{compatibility degree equals 0} and Proposition \ref{compatibility degree and multiplication}.
\end{proof}

According to Proposition \ref{compatibility degree and multiplication} and Theorem \ref{unistructurality conjecture}, we could also generalize cluster structure as maximal sets of pairwise compatible polytope functions.

\begin{Definition}
  (i)\;A polytope function $\rho_h$ is called \textbf{irreducible} if there do not exist two non-zero vector $h\p,h''\in\Z^n$ such that $\rho_h=\rho_{h\p}\rho_{h''}$.

  (ii)\;A polytope function $\rho_h$ is called \textbf{self-compatible} if $\rho_h^2=\rho_{2h}$.

  (iii)\;A \textbf{cluster} of polytope functions is a maximal set of pairwise compatible self-compatible irreducible polytope functions.
\end{Definition}

The sets of pairwise compatible self-compatible irreducible polytope functions form a complex analogous to cluster complex. Therefore, we can also construct something analogous to $g$-fan consisting of cones in $\R^n$ generated by degree vectors of a set of pairwise compatible self-compatible irreducible polytope functions and denote it by $\mathcal{C}(B)$, or simply $\mathcal{C}$ when there is no risk of confusions.

Denote by $F_h=\rho_h|_{x_i\rightarrow1,\forall i\in[1,n]}$ the $F$-polynomial of $\rho_h$ for any $h\in\Z^n$.

\begin{Lemma}\label{interior of a cone}
  Let $\{\rho_{h_i}|i\in[1,r]\}$ be a set of pairwise compatible self-compatible irreducible polytope functions, then

  (i)\;$\rho_{\sum\limits_{i=1}^r a_ih_i}=\prod\limits_{i=1}^r\rho_{h_i}^{a_i}$ for any $a_i\in\N$.

  (ii)\;for any $h\in\Z^n$ such that $\rho_h$ is a self-compatible irreducible polytope function, $\rho_h\rho_{h_i}=\rho_{h+h_i}$ for any $i\in[1,r]$ if and only if $\rho_h\rho_{\sum\limits_{i=1}^ra_ih_i}=\rho_{h+\sum\limits_{i=1}^ra_ih_i}$ for some $a_i\in\Z_{>0}$.
\end{Lemma}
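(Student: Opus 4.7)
The plan is to establish (i) first by induction on the total weight $N=\sum_{i=1}^r a_i$, and then to derive both directions of (ii) from (i) together with the summand characterization used in the proof of Proposition \ref{compatibility degree and multiplication}.

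For part (i), the cases $N\leqslant 2$ follow directly from pairwise compatibility (when $a_i=a_j=1$) and self-compatibility (when $a_i=2$). For the inductive step with $N\geqslant 3$, I would pick an index $k$ with $a_k\geqslant 1$ and apply the induction hypothesis to rewrite
\[\prod_{i=1}^r\rho_{h_i}^{a_i}=\rho_{h_k}\cdot\rho_{(\sum_i a_ih_i)-h_k},\]
so that the task reduces to showing $\rho_{h_k}$ is compatible with $\rho_{(\sum_i a_ih_i)-h_k}$. At the level of Newton polytopes, iterating the pairwise equalities $N_{h_i+h_j}=N_{h_i}\oplus N_{h_j}$ delivered by compatibility collapses all grouped sums to $N_{h_k}\oplus N_{(\sum_i a_ih_i)-h_k}=\bigoplus_i a_iN_{h_i}$. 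Since $\rho_{\sum_i a_ih_i}$ is universally indecomposable and is always a summand of $\rho_{h_k}\rho_{(\sum_i a_ih_i)-h_k}$ (the same summand principle exploited in the proof of Proposition \ref{compatibility degree and multiplication}), equality will hold as soon as the product itself is universally indecomposable, which propagates from the universal indecomposability of each $\rho_{h_i}$ combined with the Minkowski structure inherited from pairwise compatibility.

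The forward direction of (ii) is an immediate corollary of (i): the hypothesis that $\rho_h\rho_{h_i}=\rho_{h+h_i}$ for every $i$, together with self-compatibility and irreducibility of $\rho_h$, enlarges $\{\rho_{h_i}\}$ into a pairwise compatible, self-compatible, irreducible family $\{\rho_h,\rho_{h_1},\ldots,\rho_{h_r}\}$. Applying part (i) to this enlarged family with multiplicity $1$ on $\rho_h$ and $a_i$ on $\rho_{h_i}$ yields $\rho_h\rho_{\sum_i a_ih_i}=\rho_{h+\sum_i a_ih_i}$ for every non-negative tuple, in particular for the desired positive one.

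The reverse direction of (ii) is the main obstacle: starting only from one bulk equality I must recover each of the individual compatibilities $\rho_h\rho_{h_i}=\rho_{h+h_i}$. My plan is again Newton-polytope-theoretic. By (i) the hypothesis reads $N_{h+\sum a_ih_i}=N_h\oplus\bigoplus_i a_iN_{h_i}$, and I would try to recognise each Minkowski block $N_h\oplus N_{h_i}$ as (a translate of) a face of this big polytope via the face-analysis of Theorem \ref{results in LP}(iii) together with Theorem \ref{mutation of faces}. Universal indecomposability of $\rho_{h+\sum a_ih_i}$ should then force each such face to be the Newton polytope of a single polytope function; tracking degrees through the projection $\tilde\tau$ pins this face down to $N_{h+h_i}$, which gives $\rho_h\rho_{h_i}=\rho_{h+h_i}$. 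The genuine difficulty is making this identification rigorous, namely checking that the Minkowski-summand structure is detected at the face level rather than only through coarser data such as vertices or $d$-vectors; the face-correlation machinery of Section 3 is the tool I expect to carry most of the weight here, possibly supplemented by a secondary induction on the $a_i$'s that peels off one copy of $\rho_{h_i}$ at a time and argues that each quotient remains a polytope function.
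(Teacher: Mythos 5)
Your part (i) and the forward direction of (ii) follow the paper's route. For (i) the paper likewise reduces to showing the product is universally indecomposable and then invokes the summand principle, but the step you leave as an assertion --- that universal indecomposability ``propagates'' to the product --- is exactly the technical heart of the paper's argument: it reduces to the three-factor identity $\rho_h\rho_{h\p}\rho_{h''}=\rho_{h+h\p+h''}$ and proves indecomposability of the product by writing it in the two forms $\sum_{p\in N_h}co_p(N_h)\hat{Y}^pX^h\rho_{h\p+h''}=\sum_{p\p\in N_{h\p}}co_{p\p}(N_{h\p})\hat{Y}^{p\p}X^{h\p}\rho_{h+h''}$ (using the pairwise compatibilities with $h''$) and then running the chain characterization of indecomposability from \cite{LP}. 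Indecomposability of a product is certainly not automatic from indecomposability of the factors (it fails precisely when the factors are incompatible), so you need to supply this mechanism rather than gesture at it. The forward direction of (ii) is identical to the paper's and is fine.

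The reverse direction of (ii) is where you have a genuine gap, and the route you sketch is unlikely to close it. Your plan is to locate each block $N_h\oplus N_{h_i}$ as a face of $N_{h+\sum a_ih_i}$ and use the face machinery of Section 3; but a Minkowski summand is not a face of a Minkowski sum (faces of $N\oplus N\p$ are sums of faces of $N$ and $N\p$, not partial sums of whole summands), and even an identification of unweighted polytopes would not yield the equality of functions, which is a statement about weights. The ``peeling off one copy'' induction is also undefined: dividing by $\rho_{h_i}$ has no a priori meaning inside $\widehat{\mathcal{P}}$. The paper avoids all of this with a short positivity argument: if $\rho_h\rho_{h_j}\neq\rho_{h+h_j}$, then since the structure constants of the basis $\widehat{\mathcal{P}}$ are positive one can write $\rho_h\rho_{h_j}=\rho_{h+h_j}+\sum_{h\p}c_{h\p}\hat{Y}^{h+h_j-h\p}\rho_{h\p}$ with some $c_{h\p}>0$; multiplying by $\rho_{h_j}^{a_j-1}\prod_{i\neq j}\rho_{h_i}^{a_i}$ preserves these strictly positive extra terms, while $\rho_{h+\sum a_ih_i}$ is already a summand of the leading term $\rho_{h+h_j}\rho_{h_j}^{a_j-1}\prod_{i\neq j}\rho_{h_i}^{a_i}$, so $\rho_h\rho_{\sum a_ih_i}$ strictly exceeds $\rho_{h+\sum a_ih_i}$, contradicting the hypothesis. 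You should replace your face-tracking plan with this positivity-of-structure-constants argument (or something equivalent); as written, the sufficiency direction is not proved.
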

\begin{proof}
  (i)\;By an induction on $\sum\limits_{i=1}^r a_i$, it is enough to show $\rho_h\rho_{h\p}\rho_{h''}=\rho_{h+h\p+h''}$ when $\rho_h$, $\rho_{h\p}$ and $\rho_{h''}$ are pairwise compatible self-compatible polytope functions.

  Since $\rho_{h}\rho_{h\p}=\rho_{h+h\p}$, it is universally indecomposable. So for any point $p\in N_{h+h\p}$ (here we regard a point with weight $co_p(N_{h+h\p})$ as $co_p(N_{h+h\p})$ different points), there is $k\in[1,n]$ such that $\rho_{h+h\p}-\hat{Y}^pX^{h+h\p}$ can not be expressed as a formal Laurent polynomial in $X_{t_k}$, where $t_k$ is connected to $t_0$ by an edge labeled $k$, in the language in \cite{LP} which means there are two sequences $0=p_0,p_1,\cdots,p_r=p$ and $i_1,\cdots,i_r$ such that $\hat{Y}^{p_s}X^{h+h\p}$ is in $m_{i_s}(p_{s-1})(\hat{y}_{i_s}+1)^{[-deg_{x_{i_s}}(\hat{Y}^{p_{s-1}}X^{h+h\p})]_+}\hat{Y}^{p_{s-1}}X^{h+h\p}$ for any $s\in[1,r]$ (here $m_k(-)$ is defined in \cite{LP} with $s_k=1$ for any $k\in[1,n]$). Moreover, we have decompositions
  $$\rho_{h}\rho_{h\p}=\sum\limits_{p\in N_h}co_{p}(N_h)\hat{Y}^pX^{h}\rho_{h\p}=\sum\limits_{p\p\in N_{h\p}}co_{p\p}(N_{h\p})\hat{Y}^{p\p}X^{h\p}\rho_{h}.$$
  This can be regarded as a variation of the above sequences: $(\hat{y}_{i_s}+1)^{[-deg_{x_{i_s}}(\hat{Y}^{p_{s-1}}X^{h+h\p})]_+}\hat{Y}^{p_{s-1}}X^{h+h\p}$ corresponds to a segment in $N_{h+h\p}$, so we can collect segments together to get polytopes. Similarly we start from $X^h\rho_{h\p}$ (or$X^{h\p}\rho_h$), in order to make the formal Laurent polynomial can be expressed as a formal Laurent polynomial in $X_{t_k}$ for any $k\in[1,n]$, we should recursively add all the other summands in the above decompositions. That is the philosophy of our construction of $\rho_h$. Then because $\rho_{h}\rho_{h''}=\rho_{h+h''}$ and $\rho_{h\p}\rho_{h''}=\rho_{h\p+h''}$, so
  $$\rho_{h}\rho_{h\p}\rho_{h''}=\sum\limits_{p\in N_h}co_{p}(N_h)\hat{Y}^pX^{h}\rho_{h\p+h''}=\sum\limits_{p\p\in N_{h\p}}co_{p\p}(N_{h\p})\hat{Y}^{p\p}X^{h\p}\rho_{h+h''}.$$
  Since multiplying $\rho_{h''}$ does not affect the form of above two decompositions, for any point $p\in N_{h+h\p+h''}$, there is $k\in[1,n]$ such that $\rho_{h+h\p+h''}-\hat{Y}^pX^{h+h\p+h''}$ can not be expressed as a formal Laurent polynomial in $X_{t_k}$, that is, $\rho_{h}\rho_{h\p}\rho_{h''}$ is universally indecomposable.

  By Corollary 6.7 
  in \cite{LP}, $\rho_{h+h\p+h''}$ is a universally indecomposable summand of $\rho_{h}\rho_{h\p}\rho_{h''}$, hence  $\rho_{h}\rho_{h\p}\rho_{h''}=\rho_{h+h\p+h''}$, which completes the proof of (i).

  (ii)\;Because $\rho_h\rho_{h_i}=\rho_{h+h_i}$ for any $i\in[1,r]$, so $\{\rho_h\}\cup\{\rho_{h_i}|i\in[1,r]\}$ is a set of pairwise compatible self-compatible irreducible polytope functions. By (i), $\rho_h\rho_{\sum\limits_{i=1}^ra_ih_i}=\rho_{h+\sum\limits_{i=1}^ra_ih_i}$ for any $a_i\in\N$, which induces the necessity.

  Assume $\rho_h\rho_{\sum\limits_{i=1}^ra_ih_i}=\rho_{h+\sum\limits_{i=1}^ra_ih_i}$ for some $a_i\in\Z_{>0}$ but $\rho_h\rho_{h_j}\neq\rho_{h+h_j}$ for some $j\in[1,r]$. According to \cite{LP}, the structure constants of $\hat{\mathcal{P}}$ are positive, let $\rho_h\rho_{h_j}=\rho_{h+h_j}+\sum\limits_{h'}c_{h'}\hat{Y}^{h+h_j-h'}\rho_{h'}$ with $c_{h'}\in\Z_{>0}$. Then again by (i), $\rho_{\sum\limits_{i=1}^r a_ih_i}=\prod\limits_{i=1}^r\rho_{h_i}^{a_i}$, so $\rho_h\rho_{\sum\limits_{i=1}^ra_ih_i}=\rho_h\rho_{h_j}^{a_j}\prod\limits_{i\neq j}\rho_{h_i}^{a_i}$, which contains a proper summand $\rho_{h+h_j}\rho_{h_j}^{a_j-1}\prod\limits_{i\neq j}\rho_{h_i}^{a_i}$ as the structure constants are positive. While $\rho_{h+\sum\limits_{i=1}^ra_ih_i}$ is a summand of $\rho_{h+h_j}\rho_{h_j}^{a_j-1}\prod\limits_{i\neq j}\rho_{h_i}^{a_i}$, hence $\rho_h\rho_{\sum\limits_{i=1}^ra_ih_i}$ does not equal to $\rho_{h+\sum\limits_{i=1}^ra_ih_i}$, which leads to the efficiency.
\end{proof}

\begin{Proposition}\label{sign-coherence of G-matrices}
  If $\rho_h$ and $\rho_{h\p}$ are compatible with $h=(h_1,\cdots,h_n)^\top$ and $h=(h\p_1,\cdots,h\p_n)^\top$, then $h_ih\p_i\in\N$ for any $i\in[1,n]$. In particular, $G$-matrices are row sign-coherent.
\end{Proposition}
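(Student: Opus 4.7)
The plan is to apply the operator $L^{t_k}$ to both sides of the compatibility identity $\rho_h\rho_{h'}=\rho_{h+h'}$ for every $k\in[1,n]$, and then extract row sign-coherence from the constant term of the resulting $F$-polynomial identity.

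First I would record the non-multiplicativity of $L^{t_k}$ introduced by its compensation factor $y_{k;t_0}^{[\,\cdot\,]_+}$. Directly from~(\ref{modify}), for any two homogeneous Laurent polynomials $f,g$ of degrees $\alpha,\beta$ on which $L^{t_k}$ acts,
\[
L^{t_k}(f)\cdot L^{t_k}(g)\;=\;L^{t_k}(fg)\cdot y_{k;t_k}^{c_k(\alpha,\beta)},\qquad c_k(\alpha,\beta)\;:=\;[\alpha_k]_++[\beta_k]_+-[\alpha_k+\beta_k]_+\;\geqslant\;0,
\]
once one translates the resulting power of $y_{k;t_0}$ into $y_{k;t_k}$ via $y_{k;t_k}=y_{k;t_0}^{-1}$ from~(\ref{equation: mutation of y}). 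The non-negative integer $c_k(\alpha,\beta)$ vanishes precisely when $\alpha_k\beta_k\geqslant 0$.

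Applying this with $f=\rho_h$, $g=\rho_{h'}$, $fg=\rho_{h+h'}$ and using Theorem~\ref{results in LP}(i) to rewrite each $L^{t_k}(\rho_h)$ as $\rho^{t_k}_{h^{t_k}}$ (and likewise for $h'$ and $h+h'$) yields
\[
\rho^{t_k}_{h^{t_k}}\cdot \rho^{t_k}_{(h')^{t_k}}\;=\;\rho^{t_k}_{(h+h')^{t_k}}\cdot y_{k;t_k}^{c_k},\qquad c_k:=c_k(h,h')\geqslant 0.
\]
Specializing $x_{i;t_k}=1$ for every $i\in[1,n]$ turns the three polytope functions into their $F$-polynomials in $Y_{t_k}$; since the construction of $\rho_h$ in \cite{LP} places weight $1$ at the origin of every $N_h$, each such $F$-polynomial has constant term $1$, so extracting the $Y_{t_k}$-constant term gives $1=1\cdot 1$ when $c_k=0$ and $1=0$ otherwise, forcing $c_k=0$. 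This is exactly $h_kh'_k\geqslant 0$, and since $k$ was arbitrary the first assertion follows. For the ``in particular'' part, for any cluster $X_t$ and any $j,k\in[1,n]$ the cluster monomial $x_{j;t}x_{k;t}$ equals $\rho_{g_{j;t}+g_{k;t}}$ by Theorem~\ref{results in LP}(ii), hence $\rho_{g_{j;t}}\rho_{g_{k;t}}=\rho_{g_{j;t}+g_{k;t}}$; applying the first claim gives $(g_{j;t})_i(g_{k;t})_i\geqslant 0$ for every $i$, which is precisely row sign-coherence of $G_t$.

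The step I expect to require the most care is the normalization that the origin is a weight-one lattice point of every $N_h$, so that every $F$-polynomial of a polytope function has constant term $1$; this must be cited explicitly from the construction of $\rho_h$ in \cite{LP}, though the ``in particular'' statement alone only needs the classical version for cluster variables. Beyond that, the remaining work is purely the sign and semifield bookkeeping in the definition of $L^{t_k}$, after which everything rests on the elementary identity $[a]_++[b]_+\geqslant[a+b]_+$ with equality iff $ab\geqslant 0$.
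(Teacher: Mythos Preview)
Your proof is correct and takes a genuinely different route from the paper's. The paper argues by contradiction: assuming $h_k>0$ and $h'_k<0$, it chooses $a\in\Z_{>0}$ with $ah_k+h'_k\geqslant 0$, invokes Lemma~\ref{interior of a cone}(i) to obtain $N_h^{\oplus a}\oplus N_{h'}=N_{ah+h'}$, and then compares the weight at the lattice point $e_k$ on the two sides (the left is positive because $h'_k<0$ forces $co_{e_k}(N_{h'})>0$, while the right vanishes by Theorem~\ref{results in LP}(iii)(iv) since $(ah+h')_k\geqslant 0$). Your argument instead applies $L^{t_k}$ directly to the compatibility identity, isolates the defect $c_k=[h_k]_++[h'_k]_+-[h_k+h'_k]_+$ coming from the compensation factor in~(\ref{modify}), and reads off $c_k=0$ from the constant term of the resulting $F$-polynomial identity at $t_k$. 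Your route is shorter, avoids the detour through Lemma~\ref{interior of a cone}(i) (and hence the self-compatibility hypothesis that lemma uses), and makes transparent exactly which elementary inequality drives the result. The paper's approach, by contrast, stays entirely within the polytope picture and never unpacks $L^{t_k}$. On your stated concern: the fact that every $F$-polynomial of a polytope function has constant term $1$ is indeed established in \cite{LP} and is cited in the Remark following Theorem~\ref{relation between G-matrix and C-matrix}; in fact your argument only needs the constant term to be positive, which is exactly what the paper's own proof uses as well.
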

\begin{proof}
  Assume on the contrary we have $h_{k}h\p_{k}\in\Z_{<0}$ for some $k\in[1,n]$. With out loss of generality we may assume $h_{k}>0$ while $h\p_k<0$. Then we can find $a\in\Z_{>0}$ such that $ah_{k}+h\p_{k}\geqslant0$.

  Since $\rho_h$ and $\rho_{h\p}$ are compatible, following from Lemma \ref{interior of a cone} (i), we have $\rho_{h}^{a}\rho_{h\p}=\rho_{ah+h\p}$ and hence $N_{h}^{\oplus a}\oplus N_{h\p}=N_{ah+h\p}$.

  Because $h\p_{k}<0$, so $co_{e_{k}}(N_{h\p})>0$ (otherwise $L^{t_k}(\rho_{h\p})$ is not a Laurent polynomial with non-negative coefficients, where $t_k$ is connected to $t_0$ by an edge labeled $k$). Moreover due to the construction of polytope functions, we have $co_{0}(N_{h})>0$ and $co_{p}(N_{h})\in\N$ for any $p,h\in\Z^n$. So $co_{e_{k}}(N_{h}^{\oplus a}\oplus N_{h\p})>0$. However, according to Theorem \ref{results in LP} (iii) and (iv), $co_{e_{k}}(N_{ah+h\p})=0$ because $ah_{k}+h\p_{k}\geqslant0$, which induces $N_{h}^{\oplus a}\oplus N_{h\p}\neq N_{ah+h\p}$ and hence leads to a contradiction.
\end{proof}

\begin{Lemma}\label{intersection of cones}
  Let $\{\rho_{h}|h\in V\}$ and $\{\rho_{h\p}|h\p\in V\p\}$ be two sets of pairwise compatible self-compatible irreducible polytope functions, then $\text{cone}(V)\cap \text{cone}(V\p)=\text{cone}(V\cap V\p)$.
\end{Lemma}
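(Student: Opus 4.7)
The inclusion $\text{cone}(V \cap V') \subseteq \text{cone}(V) \cap \text{cone}(V')$ is immediate, so the plan concerns only the reverse containment. Given $w \in \text{cone}(V) \cap \text{cone}(V')$, I would first rescale and clear denominators to write $w = \sum_{h \in V_0} a_h h = \sum_{h' \in V'_0} b_{h'} h'$ with $a_h, b_{h'} \in \Z_{>0}$, where $V_0 \subseteq V$ and $V'_0 \subseteq V'$ are the supports of the two expressions. Applying Lemma \ref{interior of a cone} (i) to each side, these two decompositions of $w$ lift to the single identity $\rho_w = \prod_{h \in V_0} \rho_h^{a_h} = \prod_{h' \in V'_0} \rho_{h'}^{b_{h'}}$.

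Next I would establish that $V_0 \cup V'_0$ is itself pairwise compatible. Fix any $h \in V_0$. Since $\{h\} \cup V_0$ is pairwise compatible, Lemma \ref{interior of a cone} (i) gives $\rho_h \rho_w = \rho_{h+w}$. Using the alternative presentation $\rho_w = \prod_{h' \in V'_0} \rho_{h'}^{b_{h'}}$ with strictly positive exponents, Lemma \ref{interior of a cone} (ii) applied to $\{h\}$ against $V'_0$ then yields $\rho_h \rho_{h'} = \rho_{h+h'}$ for every $h' \in V'_0$. The symmetric statement for $h' \in V'_0$ against $V_0$ follows in the same way, so every pair in $V_0 \cup V'_0$ is compatible.

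The decisive step would be to deduce $V_0 = V'_0$, which sits inside $V \cap V'$ and so yields $w = \sum_{h \in V_0} a_h h \in \text{cone}(V \cap V')$. My strategy is to prove that any pairwise compatible set of self-compatible irreducible polytope functions consists of linearly independent vectors in $\R^n$. Once applied to $V_0 \cup V'_0$, the identity $\sum_{h \in V_0} a_h h - \sum_{h' \in V'_0} b_{h'} h' = 0$ would then force matching supports together with matching coefficients on $V_0 \cap V'_0$, and zero coefficients elsewhere; since all $a_h$ and $b_{h'}$ were arranged to be strictly positive, this precisely gives $V_0 = V'_0$.

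The linear independence claim is the main obstacle. The attempt I would make is: assume a nontrivial integer relation $\sum c_i h_i = 0$ among elements of a pairwise compatible self-compatible irreducible set, split it as $\sum_{c_i > 0} c_i h_i = \sum_{c_j < 0} (-c_j) h_j$, and apply Lemma \ref{interior of a cone} (i) to both sides (valid because each subfamily is pairwise compatible) to produce a common polytope function $\rho = \prod_{c_i > 0} \rho_{h_i}^{c_i} = \prod_{c_j < 0} \rho_{h_j}^{-c_j}$ expressed as two irreducible factorizations on disjoint index sets. The hard part is extracting a contradiction from this equality, since no direct unique-factorization-into-irreducibles property is available in the polytope function algebra; I would hope to combine the row sign-coherence of Proposition \ref{sign-coherence of G-matrices} (which confines $V_0 \cup V'_0$ to a single signed orthant and restricts how cancellations can occur coordinate by coordinate) with irreducibility and self-compatibility to rule out such cross-factorizations, and this last combinatorial step is where I expect the real work of the proof to lie.
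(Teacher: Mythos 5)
Your opening moves match the paper's: the easy inclusion, clearing denominators to write $f=\sum_{h\in V_0}a_hh=\sum_{h'\in V_0'}a'_{h'}h'$, and lifting both decompositions to $\rho_f=\prod_{h\in V_0}\rho_h^{a_h}=\prod_{h'\in V_0'}\rho_{h'}^{a'_{h'}}$ via Lemma \ref{interior of a cone} (i). After that the routes diverge, and yours has a genuine gap exactly where you flag it. The step you cannot supply — linear independence of any pairwise compatible set of self-compatible irreducible polytope functions — is Proposition \ref{dimension of a cone} in the paper, and there it is \emph{deduced from} Lemma \ref{intersection of cones} (together with Lemma \ref{two vectors in one line}). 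So reducing the present lemma to that linear independence statement is circular relative to the paper's development, and you would need a genuinely independent proof of it; the sign-coherence of Proposition \ref{sign-coherence of G-matrices} only confines each compatible family to a common signed orthant and is nowhere near strong enough to force linear independence on its own.

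The missing idea is a unique-factorization argument, but applied on the $F$-polynomial side rather than to the polytope functions themselves. The paper specializes $x_i\to1$ to get $F_f=\prod_{h\in V_0}F_h^{a_h}=\prod_{h'\in V_0'}F_{h'}^{a'_{h'}}$, then uses three facts: $\Q[[Y]]$ is a unique factorization domain; each $F_h$ is irreducible with constant term $1$ (from \cite{LP}); and an irreducible $\rho_h$ is recovered from $F_h$ because Theorem \ref{results in LP} (iv) pins down the Laurent monomial $X^\alpha$ by which $F_h(\hat Y)$ must be multiplied. These give $\{F_h\mid h\in V_0\}=\{F_{h'}\mid h'\in V_0'\}$ and hence $V_0=V_0'$ directly — no linear independence, and indeed no need for your intermediate step showing $V_0\cup V_0'$ is pairwise compatible (which, for what it is worth, is a correct application of Lemma \ref{interior of a cone} (ii)). The same UFD mechanism is precisely what would dispose of the "two irreducible factorizations on disjoint index sets" situation in your final paragraph, so the single missing ingredient in your proposal is the passage to $\Q[[Y]]$ and the irreducibility of the $F$-polynomials there.
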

\begin{proof}
  Obviously $\text{cone}(V)\cap \text{cone}(V\p)\supseteq \text{cone}(V\cap V\p)$. Choose an arbitrary vector $f\in \text{cone}(V)\cap \text{cone}(V\p)$, according to Lemma \ref{interior of a cone} (i), $\rho_f=\prod\limits_{h\in V}\rho_{h}^{a_{h}}=\prod\limits_{h\p\in V\p}\rho_{h\p}^{a\p_{h\p}}$ for $a_h,a\p_{h\p}\in\N$ such that $f=\sum\limits_{h\in V}a_h h=\sum\limits_{h\p\in V\p}a\p_{h\p} h\p$. Then $F_f=\prod\limits_{h\in V}F_{h}^{a_{h}}=\prod\limits_{h\p\in V\p}F_{h\p}^{a\p_{h\p}}$. Since $\Q[[Y]]$ is a unique factorization domain, $F_h$ is irreducible and $F_h$ has constant 1 for any $h\in\Z^n$ according to \cite{LP}, so $\{F_h|h\in V,a_h\neq0\}=\{F_{h\p}|h\p\in V\p,a\p_{h\p}\neq0\}$ and $a_{h}=a\p_{h\p}$ when $F_h=F_{h\p}$.

  On the other hand, because of Theorem \ref{results in LP} (iv), $d_k(\rho_h)$ equals the maximal length of edges of $N_h$ parallel to $e_k$ for any $k\in[1,n]$ when $\rho_h$ is irreducible. Hence in this case $\rho_h=X^\alpha F_h(\hat{Y})$, where $\alpha\in\Z^n$ is chosen to make $d_k(X^\alpha F_h(\hat{Y}))$ equals the maximal length of edges of $N_h$ parallel to $e_k$ for any $k\in[1,n]$, which means $\rho_h$ is uniquely determined by $F_h$.

  Therefore, $\{h\in V|a_h\neq0\}=\{h\p\in V\p|a\p_{h\p}\neq0\}$ and thus $f\in \text{cone}(V\cap V\p)$. So $\text{cone}(V)\cap \text{cone}(V\p)\subseteq \text{cone}(V\cap V\p)$.
\end{proof}
\begin{Lemma}\label{two vectors in one line}
  Assume $\rho_h$ and $\rho_{h'}$ are two different self-compatible irreducible polytope functions, then

  (i)\;$\R_{>0}h\neq\R_{>0}h'$.

  (ii)\;$\R h\neq\R h'$ if $\rho_h$ and $\rho_{h'}$ are compatible.
\end{Lemma}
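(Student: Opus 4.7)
The plan is to attack both parts with a single template: assume $h$ and $h'$ are collinear, use self-compatibility (and, for (ii), compatibility) together with Lemma \ref{interior of a cone}(i) to promote the linear relation into a multiplicative equation among powers of $\rho_h$ and $\rho_{h'}$, pass to the $F$-polynomials in $\N[Y]$, and then extract a contradiction either from unique factorization in $\Q[Y]$ or from the degree constraints in Theorem \ref{results in LP}(iv).

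For (i), I suppose $\R_{>0}h = \R_{>0}h'$ and write $h' = (p/q)h$ with $p,q$ coprime positive integers, so $qh'=ph$. Self-compatibility via Lemma \ref{interior of a cone}(i) applied to the singletons $\{\rho_h\}$ and $\{\rho_{h'}\}$ gives $\rho_h^p = \rho_{ph} = \rho_{qh'} = \rho_{h'}^q$, hence $F_h^p = F_{h'}^q$ in $\N[Y]$. I would then split on whether $F_h$ is constant. If $F_h$ (and hence $F_{h'}$) is non-constant, the irreducibility of these $F$-polynomials recorded in the proof of Lemma \ref{intersection of cones} and the UFD $\Q[Y]$ force $F_h = F_{h'}$ after normalizing constant terms to $1$, and matching $Y$-degrees gives $p=q=1$, so $h=h'$ and $\rho_h = \rho_{h'}$. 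If instead $F_h = 1$, then also $F_{h'}=1$, so $\rho_h = X^h$ and $\rho_{h'}=X^{h'}$. Theorem \ref{results in LP}(iv) applied to these (whose Newton polytopes are single points, hence have no edges) yields $[-h_k]_+ = 0$ for every $k$, putting $h,h' \in \N^n$. Writing $h = q\tilde h$ with $\tilde h\in\N^n$, Theorem \ref{results in LP}(ii) identifies $X^{\tilde h}$ with the cluster monomial $\rho_{\tilde h}$, and Lemma \ref{interior of a cone}(i) yields $\rho_h = \rho_{\tilde h}^q$; irreducibility of $\rho_h$ forces $q=1$, and symmetrically $p=1$, so once more $h=h'$, contradicting the hypothesis.

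For (ii), I first apply (i) to eliminate the positive-ray case, leaving $h' = -(p/q)h$, i.e., $ph+qh'=0$. Now the additional compatibility of $\rho_h$ and $\rho_{h'}$ lets Lemma \ref{interior of a cone}(i) deliver $\rho_h^p \rho_{h'}^q = \rho_{ph+qh'} = \rho_0 = 1$, so $F_h^p F_{h'}^q = 1$ in $\N[Y]$. Since both $F_h, F_{h'}$ have non-negative coefficients and constant term $1$, the $Y$-degree identity $p\deg F_h + q\deg F_{h'} = 0$ with $p,q>0$ forces $F_h = F_{h'} = 1$; hence $\rho_h = X^h$, $\rho_{h'}=X^{h'}$, and Theorem \ref{results in LP}(iv) places $h,h' \in \N^n$. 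But $h' = -(p/q)h$ with both vectors in the non-negative orthant then forces $h = h' = 0$, contradicting $\rho_h \neq \rho_{h'}$. The main obstacle I anticipate is the $F_h = 1$ subcase of (i): converting $\rho_h = X^h$ into a genuine factorization $\rho_h = \rho_{\tilde h}^q$ uses Theorem \ref{results in LP}(ii) and (iv) together with Lemma \ref{interior of a cone}(i), and one must first secure $\tilde h\in\N^n$ so that $\rho_{\tilde h}$ is a coefficient-free cluster monomial. A secondary subtlety is confirming that the irreducibility of $F_h$ cited from \cite{LP} is as a polynomial in $\Q[Y]$ (so that the UFD argument in the non-constant subcase is legitimate) rather than in the power series ring $\Q[[Y]]$, in which $F_h$ would already be a unit.
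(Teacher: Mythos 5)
Your proof is correct and shares the paper's skeleton --- collinearity plus Lemma \ref{interior of a cone}(i) yields a power identity among the polytope functions and hence among their $F$-polynomials --- but the two arguments finish differently. For (i), the paper does not case-split on whether $F_h$ is constant and never invokes irreducibility of $F_h$ as a polynomial: writing $h=ag$, $h'=bg$ with $g$ primitive, it extracts from $F_h^b=F_{h'}^a$ a single series $f(Y)\in\Z[[Y]]$ with $F_h=f^a$ and $F_{h'}=f^b$, uses the fact (recorded in the proof of Lemma \ref{intersection of cones}) that an irreducible polytope function is uniquely determined by its $F$-polynomial to write $\rho_h=(X^gf(\hat Y))^a$, and then identifies $X^gf(\hat Y)$ with $\rho_g$ via universal indecomposability, so that $\rho_h=\rho_g^a$ contradicts irreducibility of $\rho_h$. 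Your route instead leans on irreducibility of $F_h$ in $\Q[Y]$; the subtlety you flag at the end is real (in $\Q[[Y]]$ an element with constant term $1$ is a unit, and for a general self-compatible irreducible polytope function $F_h$ need not be a genuine polynomial), and the paper's root-extraction finish is precisely how it sidesteps that point, since roots with constant term $1$ exist and are unique in $\Q[[Y]]$ with no irreducibility needed. Your separate treatment of the $F_h=1$ subcase (monomial, hence $h\in\N^n$ by Theorem \ref{results in LP}(iv), hence $\rho_h=\rho_{\tilde h}^q$ violates irreducibility) is sound and amounts to the paper's argument specialized to $f=1$. For (ii) your argument is the same as the paper's but more complete: the paper ends with the bare assertion that $\rho_h^b\rho_{h'}^a=1$ is impossible by universal positivity, whereas you supply the intermediate steps ($F_h=F_{h'}=1$, hence $h,h'\in\N^n$ by Theorem \ref{results in LP}(iv), hence $h=h'=0$), which is evidently the argument the paper intends.
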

\begin{proof}
  (i)\; Assume $\R_{>0}h=\R_{>0}h'$, then there must be a primitive vector $g$ and two different positive integers $a$ and $b$ such that $h=ag$ and $h'=bg$. Then $\rho_{abg}=\rho_{h}^b=\rho_{h'}^a$, which induces $F_{abg}=F_{h}^b=F_{h'}^a$. Since $\Q[[Y]]$ is a unique factorization domain, there is $f(Y)\in\Q[[Y]]$ such that $F_h=f(Y)^a$ and $F_{h'}=f(Y)^b$. Hence $f(Y)\in\Z[[Y]]$. As explained in the proof of Lemma \ref{intersection of cones}, $\rho_h$ is uniquely determined by $F_h$, so $\rho_{h}=(X^g f(\hat{Y}))^a$. Since $\rho_{h}$ is a universally positive summand of $\rho_g^{a}$ and $\rho_g$ is universally indecomposable, $\rho_g=X^g f(\hat{Y})$, which contradicts $\rho_h$ being irreducible.

  (ii)\;By (i), only need to prove $\R_{>0}h\neq\R_{<0}h'$ if $\rho_h$ and $\rho_{h'}$ are compatible. Assume $\R_{>0}h=\R_{<0}h'$, then there must be a primitive vector $g$ and two different positive integers $a$ and $b$ such that $h=ag$ and $h'=-bg$. Because $\rho_h$ and $\rho_{h'}$ are compatible and self-compatible, so $1=\rho_0=\rho_{bh+ah'}=\rho_h^b\rho_{h'}^a$ by Lemma \ref{interior of a cone} (i), which is impossible as $\rho_h$ and $\rho_{h'}$ are universally positive.
\end{proof}
Combining Lemma \ref{interior of a cone}, Lemma \ref{intersection of cones} and Lemma \ref{two vectors in one line}, we could see more similarities between $\mathcal{C}$ and $g$-fan through the following results.
\begin{Proposition}\label{dimension of a cone}
  Let $\{\rho_{h}|h\in V\}$ be a set of pairwise compatible self-compatible irreducible polytope functions, then $dim(\text{cone}(V))=|V|$.
\end{Proposition}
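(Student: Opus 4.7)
The goal is to show the $|V|$ generators of $\operatorname{cone}(V)$ are linearly independent over $\R$, since then the cone has the same dimension as its linear span. Because every $h\in V\subseteq\Z^n$ is integral, linear independence over $\R$ is equivalent to linear independence over $\Z$, so I would argue by contradiction assuming a nontrivial integer relation $\sum_{h\in V}c_h h=0$. Split $V=V^{+}\sqcup V^{-}\sqcup V^{0}$ according to the sign of $c_h$, put $f=\sum_{h\in V^{+}}c_h h=\sum_{h\in V^{-}}(-c_h)h$, and use the nontriviality to arrange that at least one of $V^{+},V^{-}$ is nonempty.

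Now both $V^{+}$ and $V^{-}$ are subsets of $V$, so each is itself a set of pairwise compatible self-compatible irreducible polytope functions. Applying Lemma \ref{interior of a cone}(i) separately on $V^{+}$ and on $V^{-}$ and equating the two resulting expressions for $\rho_f$ gives
\[
\rho_f=\prod_{h\in V^{+}}\rho_h^{c_h}=\prod_{h\in V^{-}}\rho_h^{-c_h}.
\]
Specializing $x_i\mapsto 1$ for all $i$ passes to an equality of $F$-polynomials in $\Q[[Y]]$:
\[
F_f=\prod_{h\in V^{+}}F_h^{c_h}=\prod_{h\in V^{-}}F_h^{-c_h}.
\]

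The key step is to extract a contradiction from this equality via unique factorization. By the results from \cite{LP} that are already recalled in the proof of Lemma \ref{intersection of cones}, every $F_h$ is irreducible in the UFD $\Q[[Y]]$ with constant term $1$, and two polynomials with constant term $1$ are associate only if they are equal; moreover the argument quoted in Lemma \ref{intersection of cones} shows $\rho_h$ is uniquely determined by $F_h$, so distinct $h\in V$ produce distinct, pairwise non-associate irreducibles $F_h$. Hence matching factorizations on the two sides of the displayed identity forces the multisets of irreducibles to coincide, which contradicts $V^{+}\cap V^{-}=\emptyset$ (and in the edge case where one of $V^{\pm}$ is empty and $f=0$, we would need a product of non-constant irreducibles to equal $1$, which is again impossible).

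I expect the main obstacle to be bookkeeping rather than mathematics: one has to be careful that Lemma \ref{interior of a cone}(i) is applicable to each of the disjoint subsets $V^{+}$ and $V^{-}$ with \emph{non-negative} integer exponents, which is exactly why the sign split is made and why the integer relation (not merely a real one) is used. Once this is set up cleanly, the UFD argument runs and yields $\dim(\operatorname{cone}(V))=|V|$.
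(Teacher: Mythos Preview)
Your argument is correct and takes a more direct route than the paper. The paper proceeds by induction on $|V|$: the base case $|V|\leqslant 2$ invokes Lemma~\ref{two vectors in one line}(ii), and the inductive step is a short geometric argument (a new compatible vector $h'$ cannot lie in $\text{aff}(V)$) that is reduced to Lemma~\ref{intersection of cones}. You instead posit an integer relation, split it by sign, and run the unique-factorization argument on $F$-polynomials once. Since that UFD argument is exactly the engine inside the proof of Lemma~\ref{intersection of cones}, you are in effect extracting that engine and applying it directly rather than wrapping it in an induction and a cone-intersection statement; this is cleaner and avoids the auxiliary geometric step, while the paper's packaging has the advantage of being modular.

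One point deserves an extra line. The claim that each $F_h$ is a non-unit irreducible fails when $F_h=1$, which for an irreducible $\rho_h$ occurs precisely when $h\in\{e_1,\dots,e_n\}$. In that situation the UFD matching only forces $c_h=0$ for those $h$ with $F_h\neq 1$, and the residual relation then lives among distinct standard basis vectors and is therefore trivial. Equivalently, for your edge case $f=0$ one can argue as in Lemma~\ref{two vectors in one line}(ii): the identity $\prod_{h\in V^{+}}\rho_h^{c_h}=1$ forces each $\rho_h$ to be a unit in the Laurent polynomial ring, hence a monomial $X^h$ with $h\geqslant 0$, whence $h=e_i$ by irreducibility, and again no nontrivial relation exists. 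This is a two-line addendum, and the paper's own proof of Lemma~\ref{intersection of cones} carries the same imprecision, so your argument is at the same level of rigor.
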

\begin{proof}
  This can be done by an induction on $|V|$. It is true when $|V|\leqslant 2$ due to Lemma \ref{two vectors in one line} (ii). Now assume $|V|\geqslant2$. Let $\rho_{h\p}$ be a self-compatible irreducible polytope function which is compatible with $\rho_h$ for any $h\in V$ and $h\p\notin V$. According to inductive assumption, $dim(\text{cone}(V\setminus\{h\}\cup\{h\p\}))=|V|$ for any $h\in V$, so $h\p$ does not lie in $\text{aff}(V\setminus\{h\})$. If $h\p\in\text{aff}(V)$, we can find $h\in V$ such that $h\p$ and $h$ lie in the same half-space of $\text{aff}(V)$ with respect to $\text{aff}(V\setminus\{h\})$. Then due to Lemma \ref{intersection of cones}, $h=h\p$, which contradicts our assumption. Therefore, $h\p\notin\text{aff}(V)$, which means $dim(\text{cone}(V\cup\{h\p\}))=dim(\text{cone}(V))+1=|V|+1$.
\end{proof}
Therefore, there are at most $n$ polytope functions in a cluster.
\begin{Proposition}
  $\mathcal{C}$ is a fan and it contains all cones in $g$-fan.
\end{Proposition}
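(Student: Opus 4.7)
The plan is to verify the two defining axioms of a polyhedral complex for $\mathcal{C}$ and then to check that every cone in the $g$-fan arises from a pairwise compatible self-compatible irreducible family of polytope functions, so that it sits in $\mathcal{C}$. The backbone of the argument will be Proposition \ref{dimension of a cone} (which forces the cones in $\mathcal{C}$ to be simplicial) together with Lemma \ref{intersection of cones} (which controls how two such cones meet).

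First I would observe that every cone in $\mathcal{C}$ is simplicial: if $V$ is a pairwise compatible self-compatible irreducible family, Proposition \ref{dimension of a cone} gives $\dim(\text{cone}(V))=|V|$, so the degree vectors in $V$ are linearly independent. Consequently every face of $\text{cone}(V)$ has the form $\text{cone}(V')$ for some subset $V'\subseteq V$, and such a subset is again pairwise compatible self-compatible irreducible, so $\text{cone}(V')\in\mathcal{C}$, settling the face-closure axiom. For the intersection axiom, given two such families $V$ and $V'$, Lemma \ref{intersection of cones} gives $\text{cone}(V)\cap\text{cone}(V')=\text{cone}(V\cap V')$; since $V\cap V'$ is a subset of both families, the preceding observation shows this intersection is a common face of $\text{cone}(V)$ and $\text{cone}(V')$. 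Hence $\mathcal{C}$ is a fan.

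Finally, to see that every $g$-cone lies in $\mathcal{C}$, it suffices by face closure to treat a maximal $g$-cone $\text{cone}(g_{1;t},\ldots,g_{n;t})$. By Theorem \ref{results in LP} (ii), the polytope functions $\rho_{g_{i;t}}=x_{i;t}$ are cluster variables, hence irreducible; they are self-compatible because $\rho_{g_{i;t}}^2=x_{i;t}^2=\rho_{2g_{i;t}}$ (again by Theorem \ref{results in LP} (ii) applied to the cluster monomial $x_{i;t}^2$); and they are pairwise compatible because Proposition \ref{compatibility degree equals 0} gives $(x_{i;t}|x_{j;t})=0$ whenever $i\neq j$, so Proposition \ref{compatibility degree and multiplication} yields $\rho_{g_{i;t}}\rho_{g_{j;t}}=\rho_{g_{i;t}+g_{j;t}}$. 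I do not expect a substantive obstacle: the entire verification is a matching of the just-established definitions against the fan axioms, and the real analytic content was already borne by Lemma \ref{intersection of cones} and Proposition \ref{dimension of a cone}. The most delicate point to get right is the simpliciality step, where one must ensure that Proposition \ref{dimension of a cone} yields genuine linear (rather than merely conical) independence of $V$, so that the faces of $\text{cone}(V)$ are in bijection with the subsets of $V$ and the two axioms reduce cleanly to statements about subsets of compatible families.
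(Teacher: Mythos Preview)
Your proposal is correct and follows essentially the same route as the paper: face closure via subsets of a compatible family, the intersection axiom via Lemma \ref{intersection of cones}, and the $g$-fan inclusion via Proposition \ref{compatibility degree equals 0} together with Proposition \ref{compatibility degree and multiplication}. The one place where you are more explicit than the paper is in invoking Proposition \ref{dimension of a cone} to force simpliciality (so that faces of $\text{cone}(V)$ really are the $\text{cone}(V')$ for $V'\subseteq V$); the paper asserts this face description without comment, so your added justification is a genuine improvement in rigor rather than a different argument.
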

\begin{proof}
  For any cone in $\mathcal{C}$ generated by a set $S$ of degree vectors of pairwise compatible self-compatible irreducible polytope functions, any face of it is generated by a subset of $S$, which is a cone in $\mathcal{C}$. By Lemma \ref{intersection of cones}, the intersection of two cones is their common face. Therefore, $\mathcal{C}$ is a fan.

  Then by Theorem \ref{compatibility degree equals 0} and Proposition \ref{compatibility degree and multiplication}, cones in $g$-fan are also cones in $\mathcal{C}$.
\end{proof}

\section{Duality of $G$-matrices and $C$-matrices}
Another way to look at clusters can be given via polytope realization of $G$-matrices or $C$-matrices as they one-to-one correspond to clusters. Before introducing this realization, we would like to generalize the duality of $G$-matrices and $C$-matrices presented in \cite{NZ} to TSSS case in this section.

\begin{Theorem}\label{GB=BC}
  \cite{FZ4}For a TSSS cluster algebra $\A$ and any $t,t\p\in\T_n$, $G^{B_t;t}_{t\p}B_{t\p}=B_tC^{B_t;t}_{t\p}$.
\end{Theorem}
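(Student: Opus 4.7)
The plan is to prove the identity $G^{B_t;t}_{t\p}B_{t\p}=B_tC^{B_t;t}_{t\p}$ by induction on the graph distance $d(t,t\p)$ in $\T_n$, following the strategy of Fomin--Zelevinsky but being careful about the TSSS setting. Without loss of generality I will take $t$ to be the initial vertex $t_0$, so that $B = B_{t_0}$, and write $G_{t\p}$ and $C_{t\p}$ for the $G$- and $C$-matrices in question.

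The base case $t\p=t_0$ is immediate: $G_{t_0}=C_{t_0}=I_n$, and both sides become $B_{t_0}$. For the inductive step, suppose the equation $G_{t\pp}B_{t\pp}=B_{t_0}C_{t\pp}$ holds and $t\p$ is connected to $t\pp$ by an edge labeled $k$. I want to verify the identity for $t\p$ column by column. The easy case is the $k$-th column: by (\ref{equation: mutation of B}), $b_{ik}^{t\p}=-b_{ik}^{t\pp}$ for all $i$ (and $b_{kk}^{t\pp}=0$), and for $C$-matrices (which mutate as the bottom part of $\tilde B$), $c_{ik}^{t\p}=-c_{ik}^{t\pp}$. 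Since $g_{i;t\p}^{t_0}=g_{i;t\pp}^{t_0}$ for $i\neq k$ by (\ref{equation: mutation of g-vectors}), the $k$-th column identity reduces, via $b_{kk}^{t\pp}=0$, to negating the $k$-th column of the inductive hypothesis.

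For a column $j\neq k$ the calculation is longer. The plan is to apply (\ref{equation: mutation of B}) to express $b_{ij}^{t\p}$ in terms of entries of $B_{t\pp}$, apply the analogue of (\ref{equation: mutation of B}) for the extended matrix to express $c_{ij}^{t\p}$ in terms of $C_{t\pp}$ and $B_{t\pp}$, and apply (\ref{equation: mutation of g-vectors}) to express $g_{k;t\p}^{t_0}$ in terms of the data at $t\pp$. Here I would invoke column sign-coherence of $C$-matrices to pick a uniform sign $\epsilon_k$ for the entries of the $k$-th column of $C_{t\pp}$; this lets me simplify $[\epsilon_k c_{ik}^{t\pp}]_+=\epsilon_k c_{ik}^{t\pp}$ in (\ref{equation: mutation of g-vectors}). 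Substituting the inductive identity $\sum_i c_{ik}^{t\pp}b_i^{t_0}=\sum_i b_{ik}^{t\pp}g_{i;t\pp}^{t_0}$ replaces the $b_i^{t_0}$-term in the mutation formula for $g_{k;t\p}^{t_0}$ by a sum of $g_{i;t\pp}^{t_0}$-terms, after which both sides of the column-$j$ equation reduce to the same expression purely in $g_{i;t\pp}^{t_0}$'s. The combinatorial identity that makes this work is the standard $[x]_+-x=[-x]_+$ together with the usual manipulation of $\text{sign}(b_{ik}^{t\pp})[b_{ik}^{t\pp}b_{kj}^{t\pp}]_+$ into $[b_{ik}^{t\pp}]_+[b_{kj}^{t\pp}]_+-[-b_{ik}^{t\pp}]_+[-b_{kj}^{t\pp}]_+$.

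The main obstacle is the use of column sign-coherence of $C$-matrices, which is non-trivial in the TSSS case and does not follow from (\ref{equation: mutation of B}) alone. The author already draws on Liu's work and on \cite{LP} elsewhere in the paper, and column sign-coherence in the TSSS setting is established (or reduced to a previously confirmed conjecture) there; once this is in hand, the rest of the argument is a direct, purely combinatorial verification that the mutation rules for $G$, $C$, and $B$ are compatible with the equation. A cleaner formulation would be to verify, in place of the column-by-column computation, that the matrix identity is preserved under the simultaneous mutation of $(G_{t\pp},C_{t\pp},B_{t\pp})$ at direction $k$, which amounts to the same underlying calculation.
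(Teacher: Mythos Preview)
The paper does not prove this theorem; it simply cites \cite{FZ4}. The argument there is not an entry-by-entry induction but a one-line conceptual one: by \cite[Proposition~3.9]{FZ4} each $\hat y_{j;t}$ is a rational function of the initial $\hat y_1,\dots,\hat y_n$, and since $\deg(\hat y_i)=-b_i+b_i=0$ in the $\Z^n$-grading, one gets $\deg(\hat y_{j;t})=0$. Expanding $\hat y_{j;t}=y_{j;t}\prod_i x_{i;t}^{b_{ij}^t}$ with $\deg(x_{i;t})=g_{i;t}$ and $\deg(y_{j;t})=-\sum_l c_{lj}^t b_l$ yields exactly the $j$-th column of $G_tB_t=B_{t_0}C_t$. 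No sign-coherence enters.

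Your inductive verification is a legitimate alternative, but the appeal to column sign-coherence is unnecessary and slightly dangerous. Unnecessary, because the recursion (\ref{equation: mutation of g-vectors}) is valid for \emph{both} choices $\epsilon=\pm1$ (it is derived in \cite{FZ4} directly from the exchange relation, with no sign assumption). Writing $A_\epsilon=\sum_l[\epsilon b_{lk}^{t\pp}]_+ g_{l;t\pp}$ and $D_\epsilon=\sum_l[\epsilon c_{lk}^{t\pp}]_+ b_l$, both give $g_{k;t\p}+g_{k;t\pp}=A_\epsilon-D_\epsilon$; the difference $A_+-A_-=D_+-D_-$ is precisely the $k$-th column of the inductive hypothesis, and substituting either form into your column-$j$ computation makes the discrepancy collapse identically, without ever needing to know $\mathrm{sign}(c_{ik}^{t\pp})$. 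Slightly dangerous, because in this paper Theorem~\ref{GB=BC} is used in the proof of Theorem~\ref{relation between G-matrix and C-matrix}(ii), which then feeds back into the paper's own derivation of sign-coherence; you can break the circle by citing \cite{LP} directly, but it is cleaner to avoid the detour altogether.
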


\begin{Definition}
  Let $B=(b_{ij})$ and $B\p=(b\p_{ij})$ be two $n_1\times n_2$ matrices.

  (i)\;We say they are \textbf{weakly sign-synchronic} to each other if $b_{ij}b\p_{ij}\geqslant0$ for any $i\in[1,n_1]$ and $j\in[1,n_2]$.

  (ii)\;We say they are \textbf{sign-synchronic} to each other if either $b_{ij}b\p_{ij}>0$ or $b_{ij}=b\p_{ij}=0$ for any $i\in[1,n_1]$ and $j\in[1,n_2]$.
\end{Definition}

The following result confirms conjecture 3.13 proposed by Siyang Liu in \cite{L}, which is also proved in the same paper to hold for an acyclic sign-skew-symmetric cluster algebra.

\begin{Proposition}\label{sign-synchronicity of c-vectors for B and -B^T}
  Let $B$ be a TSSS matrix, then for any $t\in\T_n$, $G_t^{B;t_0}$ is sign-synchronic to $G_t^{-B^\top;t_0}$ and $C_t^{B;t_0}$ is sign-synchronic to $C_t^{-B^\top;t_0}$.
\end{Proposition}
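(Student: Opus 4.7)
The plan is to prove both statements by simultaneous induction on the length $r$ of a mutation sequence from $t_0$ to $t$. A key preliminary is the identity $B_t^{-B^\top;t_0} = -(B_t^{B;t_0})^\top$ for every $t \in \T_n$, which follows by a direct check from the exchange-matrix mutation rule (\ref{equation: mutation of B}): at $t = t_0$ it is immediate, and one verifies that $\mu_k(-M^\top) = -\mu_k(M)^\top$ for any sign-skew-symmetric $M$ by a small case analysis on the signs of $M_{ki}$ and $M_{jk}$. Combined with sign-skew-symmetry, this gives $\operatorname{sign}(b^{-B^\top;t}_{ij}) = \operatorname{sign}(b^{B;t}_{ij})$ entrywise, so the exchange matrices on the two sides are sign-synchronic throughout $\T_n$. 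The base case $t = t_0$ of the main induction is trivial, all four matrices being the identity.

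For the inductive step, let $t\p$ be adjacent to $t$ via $\mu_k$ and assume sign-synchronicity at $t$. The $k$-th column of each $C$-matrix simply negates, so its sign-synchronicity is automatic, and the non-$k$-th columns of $G$ are unchanged. For $j \neq k$, the row-by-row $C$-mutation reads $c_{l,j}^{t\p} = c_{l,j}^t + \operatorname{sign}(c_{l,k}^t)[c_{l,k}^t\, b_{k,j}^t]_+$; by the inductive hypothesis and the preliminary observation, the additive correction terms on the $B$- and $-B^\top$-sides are sign-synchronic entrywise. The $k$-th column of $G$ mutates by (\ref{equation: mutation of g-vectors}), whose three summation terms can be organized row by row using the row sign-coherence of $G$-matrices (Proposition \ref{sign-coherence of G-matrices}). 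Coupling the $G$- and $C$-inductions through the identity $G_t^{B;t_0} B_t = B_{t_0}\, C_t^{B;t_0}$ from Theorem \ref{GB=BC} then propagates sign information between the two sides cleanly.

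The main obstacle is ruling out anomalous cancellation: two pairs of sign-synchronic integers may add to quantities that are not sign-synchronic, since the magnitudes might cancel differently on the two sides. My strategy for this is to pass to the Newton polytope picture via Theorem \ref{mutation of faces}: the mutation of any face $S$ of $N_h$ under $\mu_k$ is realized, up to isomorphism or a non-negative polytope projection, as the mutation of some $N_f|_{\A\p}$ in a sub-cluster algebra $\A\p$ of rank $\max\{\operatorname{ldim}(S), \operatorname{ldim}(S\p)\}$. Iterating this descent simultaneously inside $\A(B)$ and $\A(-B^\top)$, and using that the exchange matrices of $\A\p$ and its $-B^\top$-analogue also remain sign-synchronic after the restriction (by the same preliminary observation applied to $\A\p$), reduces any potential mismatch to a question inside cluster algebras of rank at most two. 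In rank two the matrix is automatically skew-symmetrizable, so column sign-coherence of $C$-matrices is known; together with Proposition \ref{rank 2 finite}, which matches the cluster structures of $\A(B)$ and $\A(-B^\top)$ combinatorially, a direct inspection of the finite list of exchange relations yields sign-synchronicity of the rank-two $G$- and $C$-matrices, which one propagates back up the polytope descent to close the induction.
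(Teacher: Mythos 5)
Your preliminary identity $B_t^{-B^\top;t_0}=-(B_t^{B;t_0})^\top$ is correct, your base case is fine, and you have correctly located the real difficulty (sign-synchronic summands can add to non-sign-synchronic sums). Your general strategy --- descend to sub-cluster algebras via Theorem \ref{mutation of faces} and exploit that low rank is skew-symmetrizable --- is also the strategy of the paper, which for each $j$ takes $I=[1,n]\setminus\{j\}$ and passes to the rank-$(n-1)$ algebras $\A(B_j)$ and $\A(-B_j^\top)$. But there is a genuine gap at the heart of your step: ``iterating this descent simultaneously inside $\A(B)$ and $\A(-B^\top)$'' presupposes that the mutation sequence $\mu$ induces \emph{the same} mutation sequence $\tilde\mu$ in the sub-algebra on both sides. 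This is not automatic and is in fact the hardest part of the argument: in the paper it is Theorem \ref{c-vectors in N} (iii) together with the face-complex bijections $\varphi_s$ of part (iv), whose construction occupies most of the proof in Section 7. Worse, whether $\mu^\epsilon_{j_s}$ is empty is controlled by the supports of $g$-vectors via the duality $G^{B_t;t}_{t\p}(C^{-B_t^\top;t}_{t\p})^\top=I_n$, which itself is proved \emph{using} sign-synchronicity at the previous vertex; so the proposition cannot be proved by a stand-alone induction on the length of $\mu$ --- it must be interleaved with Theorem \ref{relation between G-matrix and C-matrix} (ii)--(iii) and Theorem \ref{c-vectors in N} in a double induction on rank and on mutation-sequence length, with a careful ordering of which statement feeds which. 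Your proposal does not set up this mutual dependency, and without it the descent does not couple the two sides at all.

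A second, smaller gap is the return trip. Sign-synchronicity of the rank-two (or rank-$(n-1)$) $G$-matrices does not directly give information about individual entries of the rank-$n$ matrix $G_t^{B;t_0}$, because the sub-algebra $G$-matrix columns sit inside $\R^n$ only after an embedding, and the full $g$-vectors are not literally entries of the restricted ones. The paper bridges this with a cone-containment argument ($\text{cone}(G_t^{B;t_0})$ lies in the cone generated by $\gamma_{j;0}(G^{B_j;t_0}_{t\p(r)})$ and $\pm e_j$), row sign-coherence of $G$-matrices, the arbitrariness of $j$ with $n\geqslant3$ to cover all entries, and finally the vanishing criterion $g_{ij;t}^{B;t_0}=0\Leftrightarrow g_{ij;t}^{-B^\top;t_0}=0$ coming again from Theorem \ref{c-vectors in N} (iii)--(iv) to upgrade weak sign-synchronicity to sign-synchronicity. ``Propagates back up the polytope descent'' elides all of this. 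So while your outline points in the right direction, the two load-bearing steps --- matching the induced mutation sequences on the two sides, and lifting entrywise sign information back to rank $n$ --- are precisely the ones left unproved.
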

We will leave the proof of Proposition \ref{sign-synchronicity of c-vectors for B and -B^T} to next section. Before that, we would like to present the duality between $G$-matrices and $C$-matrices.

\begin{Theorem}\label{relation between G-matrix and C-matrix}
  For any $t,t\p\in\T_{n}$, the following equations hold

  (i)\;$(G^{B_{t\p};t\p}_{t})^{\top}=C^{B_{t}^{\top};t}_{t\p}$.

  (ii)\;$G^{B_t;t}_{t\p}G^{-B_{t\p};t\p}_t=I_n$.

  (iii)\;$G^{B_t;t}_{t\p}(C^{-B_{t}^{\top};t}_{t\p})^{\top}=I_n$ and $C^{B_t;t}_{t\p}C^{-B_{t\p};t\p}_t=I_n$.
\end{Theorem}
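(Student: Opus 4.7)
The overall approach is induction on the distance between $t$ and $t\p$ in $\T_n$, verifying that each identity holds at $t=t\p$ (where every matrix in sight reduces to $I_n$) and is preserved under a single mutation. Since the mutation formulas for $g$-vectors and $c$-vectors both involve a sign parameter $\epsilon\in\{+,-\}$, the central difficulty will be to ensure that this sign can be chosen consistently on both sides of each identity. In the skew-symmetrizable case treated in \cite{NZ}, column sign-coherence of $c$-vectors provides such a canonical choice, but in the TSSS setting that property is not available a priori; the role of Proposition \ref{sign-synchronicity of c-vectors for B and -B^T} is precisely to supply the weaker yet sufficient statement that $C$-matrices associated to $B$ and $-B^\top$ are sign-synchronic, which is what allows us to fix $\epsilon$ coherently on both sides of the proposed equations.

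I would attack (i) first. Fix $t$ and induct on $d(t,t\p)$. The base case $t=t\p$ is trivial. For the inductive step, let $t''$ be connected to $t\p$ by an edge labeled $k$ and assume $(G^{B_{t\p};t\p}_t)^\top = C^{B_t^\top;t}_{t\p}$. The right-hand side evolves under the standard $c$-vector mutation formula in the cluster algebra with initial data $(B_t^\top,t)$, while the left-hand side evolves under a change of initial seed (as the root moves from $t\p$ to $t''$). Writing the $G$-matrix with the new root in terms of the old via the standard change-of-seed formula, and then applying Theorem \ref{GB=BC} to rewrite the correction term, yields an expression of the same shape as the right-hand side with its own sign parameter $\epsilon$. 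Invoking Proposition \ref{sign-synchronicity of c-vectors for B and -B^T} one matches the signs on the two sides and concludes the induction.

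Once (i) is in hand, the remaining statements follow essentially by algebraic manipulation. For (iii), the identity $(G^{-B_{t\p};t\p}_t)^\top = C^{-B_t^\top;t}_{t\p}$, obtained by applying (i) with $B$ replaced by $-B$, rewrites $(C^{-B_t^\top;t}_{t\p})^\top$ as $G^{-B_{t\p};t\p}_t$, so the first half of (iii) is equivalent to (ii); the second half of (iii) is then obtained by sandwiching Theorem \ref{GB=BC} between (i) and its variant for $-B$. For (ii), one strategy is a separate induction using the same machinery: at $t=t\p$ both factors are $I_n$, and the inductive step reduces, via Theorem \ref{GB=BC} together with (i), to checking that the mutation matrices attached to $(B_t,t)$ and to $(-B_{t\p},t\p)$ are mutually inverse; sign-synchronicity again supplies the compatibility of $\epsilon$ needed for the two mutation matrices to compose to $I_n$.

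The hard part will be the sign-consistency in the inductive step of (i). Without Proposition \ref{sign-synchronicity of c-vectors for B and -B^T}, the two sides of the identity could a priori evolve with opposite choices of $\epsilon$ at some mutation, breaking the induction; the proposition is exactly what prevents this. A secondary technical burden is the careful bookkeeping of the change-of-initial-seed formula for $G$-matrices and of how transposition interacts with mutation when $B$ is replaced by $B^\top$ or $-B^\top$, but these are straightforward verifications once the sign issue is settled.
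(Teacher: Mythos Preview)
Your inductive framework and your handling of (ii) and (iii) are essentially what the paper does, but you have misplaced the difficulty: the sign-matching problem arises in (ii), not in (i).

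For (i) the paper does \emph{not} invoke Theorem~\ref{GB=BC} or Proposition~\ref{sign-synchronicity of c-vectors for B and -B^T}. The key input is Theorem~\ref{results in LP}~(i), the change-of-initial-seed formula for polytope functions coming from \cite{LP}: it says that when the root moves from $t'$ to $t'_k$ the degree vector transforms by
\[
h^{t_k}=h-2h_k e_k+h_k[b_k]_+ + [-h_k]_+\,b_k,
\]
which carries \emph{no} sign parameter $\epsilon$. Using the identity $a[b]_+ + [-a]_+ b=\mathrm{sign}(b)[ab]_+$ this rewrites entrywise as $g^{t'_k}_{ji}=g^{t'}_{ji}+\mathrm{sign}(b^{t'}_{kj})[g^{t'}_{ki}b^{t'}_{kj}]_+$, which is literally the $c$-vector mutation rule for $C^{B_t^\top;t}_{t'}$ (the apparent $\epsilon$ in the $c$-vector formula is illusory since $\mathrm{sign}(a)[ab]_+=\mathrm{sign}(b)[ab]_+$). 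So (i) follows by direct comparison, no sign reconciliation required. Your proposed route through $GB=BC$ and sign-synchronicity is unnecessary here and risks circularity, since Proposition~\ref{sign-synchronicity of c-vectors for B and -B^T} is itself proved jointly with (ii).

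For (ii) your sketch is right and matches the paper: one mutates $G^{B_t;t}_{t'}$ at the target via (\ref{equation: mutation of g-vectors}) and $G^{-B_{t'};t'}_t$ at the root via Theorem~\ref{results in LP}~(i), expands $\sum_i g'_i\bar g'_i$, and the cross terms cancel using $GB=BC$ together with the inequality $\bar g_k c^{t'}_{jk}\geqslant 0$. That inequality is exactly where (i) and Proposition~\ref{sign-synchronicity of c-vectors for B and -B^T} enter. Note also that in the paper (ii), (iii) and Proposition~\ref{sign-synchronicity of c-vectors for B and -B^T} are established by a single intertwined double induction (on rank and on path length, carried out in Section~7), so you should not treat Proposition~\ref{sign-synchronicity of c-vectors for B and -B^T} as a black box available before (ii).
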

\begin{proof}
  Fix $t$, we prove the theorem by induction on the length of path from $t$ to $t\p$ in $\T_{n}$. When $t\p=t$, $G^{B_{t\p};t\p}_{t}=C^{B_t^{\top};t}_{t\p}=G^{B_t;t}_{t\p}=G^{-B_{t\p};t\p}_t=I_{n}$, so both equalities hold.

  Now assume $(G^{B_{t\p};t\p}_{t})^{\top}=C^{B_t^{\top};t}_{t\p}$ and $G^{B_t;t}_{t\p}G^{-B_{t\p};t\p}_t=I_n$, then we need to prove $(G^{B_{t\p_k};t\p_{k}}_{t})^{\top}=C^{B_t^{\top};t}_{t\p_{k}}$ and $G^{B_t;t}_{t\p_k}G^{-B_{t\p_k};t\p_k}_t=I_n$ for any $k\in[1,n]$, where $t\p_{k}$ is connected to $t\p$ by an edge labeled $k$.

  (i)\;Let $C^{B_t^{\top};t}_{t\p}=(c_{ij}^{t\p})_{i,j\in[1,n]}$, $C^{B_t^{\top};t}_{t\p_{k}}=(c_{ij}^{t\p_k})_{i,j\in[1,n]}$, $G^{B_{t\p};t\p}_{t}=(g_{ij}^{t\p})_{i,j\in[1,n]}$, $G^{B_{t\p_k};t\p_{k}}_{t}=(g_{ij}^{t\p_{k}})_{i,j\in[1,n]}$ and $B_{t\p}=(b_{ij}^{t\p})_{i,j\in[1,n]}$.

  According to the definition of $C$-matrices, $\begin{pmatrix}
     B^{B_{t}^{\top};t}_{t\p_{k}} \\
     C^{B_{t}^{\top};t}_{t\p_{k}}
   \end{pmatrix}=\mu_{k}(\begin{pmatrix}
                           B^{B_t^{\top};t}_{t\p} \\
                           C^{B_t^{\top};t}_{t\p}
                         \end{pmatrix})$. So following (\ref{equation: mutation of B}),
  \begin{equation*}
    c_{ij}^{t\p_{k}}=\left\{\begin{array}{ll}
                       -c_{ij}^{t\p}, & \text{if }j=k; \\
                       c_{ij}^{t\p}+sign(b_{kj}^{t\p})[c_{ik}^{t\p}b_{kj}^{t\p}]_+, & \text{otherwise}.
                     \end{array}\right.
  \end{equation*}
  On the other hand, according to Theorem \ref{results in LP} (i) and (ii),
  \begin{equation*}
    g_{ji}^{t\p_{k}}=\left\{\begin{array}{ll}
                       -g_{ji}^{t\p}, & \text{if }j=k; \\
                       g_{ji}^{t\p}+sign(b_{kj}^{t\p})[g_{ki}^{t\p}b_{kj}^{t\p}]_+, & \text{otherwise}.
                     \end{array}\right.
  \end{equation*}
  According to the inductive assumption, $c_{ij}^{t\p}=g_{ji}^{t\p}$ for any $i,j\in[1,n]$. Then by comparing the above two formulas we get $c_{ij}^{t\p_k}=g_{ji}^{t\p_k}$ for any $i,j\in[1,n]$, which completes the proof.

  (ii)\;By the exactly same discussion as that in \cite{NZ} for skew-symmetrizable case or that in \cite{L} for acyclic sign-skew-symmetric case, we can see (ii) holds so long as Proposition \ref{sign-synchronicity of c-vectors for B and -B^T} holds. For the completeness of proof, we repeat their discussion here.

  Choose arbitrary $r,s\in[1,n]$. For simplicity denote by $g_{i}$ and $g\p_i$ the entries of $G^{B_{t};t}_{t\p}$ and $G^{B_{t};t}_{t\p_k}$ respectively lying in the intersection of the $r$-th row and the $i$-th column while by $\bar{g}_{i}$ and $\bar{g}\p_i$ the entries of $G^{-B_{t\p};t\p}_t$ and $G^{-B_{t\p_k};t\p_k}_{t}$ respectively lying in the intersection of the $i$-th row and the $s$-th column.

  According to (\ref{equation: mutation of g-vectors}) and Theorem \ref{results in LP} (i),
  \begin{equation*}
    g\p_i=\left\{\begin{array}{ll}
                   g_i, & \text{if }i\neq k; \\
                   -g_k+\sum\limits_{j=1}^{n}[-b_{jk}^{t\p}]_+g_j-\sum\limits_{j=1}^{n}[-c_{jk}^{t\p}]_+b_{rj}^t, & \text{if }i=k.
                 \end{array}\right.
  \end{equation*}
  while
  \begin{equation*}
    \bar{g}\p_i=\left\{\begin{array}{ll}
                   \bar{g}_i+[-b_{ik}^{t\p}]_+\bar{g}_k-b_{ik}^{t\p}[-\bar{g}_k]_+, & \text{if }i\neq k; \\
                   -\bar{g}_k, & \text{if }i=k.
                 \end{array}\right.,
  \end{equation*}
  where $b_{ij}^t$ represents the corresponding entry of $B_t$.

  Therefore,
  \begin{equation*}
    \begin{array}{ll}
      \sum\limits_{i=1}^{n}g\p_i\bar{g}\p_i & =\sum\limits_{i\neq k}g\p_i\bar{g}\p_i+\sum\limits_{i=k}g\p_i\bar{g}\p_i \\
       & =\sum\limits_{i\neq k}g_i(\bar{g}_i+[-b_{ik}^{t\p}]_+\bar{g}_k-b_{ik}^{t\p}[-\bar{g}_k]_+) +\bar{g}_k(g_k-\sum\limits_{j=1}^{n}[-b_{jk}^{t\p}]_+g_j+\sum\limits_{j=1}^{n}[-c_{jk}^{t\p}]_+b_{rj}^t) \\
       & = \sum\limits_{i=1}^{n}g_i\bar{g}_i-\sum\limits_{i=1}^{n}g_ib_{ik}^{t\p}[-\bar{g}_k]_++\sum\limits_{j=1}^{n}b_{rj}^t[-c_{jk}^{t\p}]_+\bar{g}_k
    \end{array}
  \end{equation*}
  By (i) and Proposition \ref{sign-synchronicity of c-vectors for B and -B^T}, $\bar{g}_kc_{jk}^{t\p}\geqslant0$. And by Theorem \ref{GB=BC}, $\sum\limits_{i=1}^{n}g_ib_{ik}^{t\p}-\sum\limits_{j=1}^{n}b_{rj}^tc_{jk}^{t\p}=0$. Therefore, $\sum\limits_{i=1}^{n}g\p_i\bar{g}\p_i=\sum\limits_{i=1}^{n}g_i\bar{g}_i$ and hence $G^{B_t;t}_{t\p_k}G^{-B_{t\p_k};t\p_k}_t=G^{B_t;t}_{t\p}G^{-B_{t\p};t\p}_t=I_n$.

  (iii) directly follows from (i) and (ii).
\end{proof}

\begin{Remark}
  By Theorem \ref{relation between G-matrix and C-matrix} (i), the row sign-coherence of $G$-matrices in Proposition \ref{sign-coherence of G-matrices} induces the column sign-coherence of $C$-matrices. This can also be seen from \cite{LP} where we proved that every $F$-polynomial has constant term 1 and a unique maximal term with constant coefficient 1, which is equivalent to the column sign-coherence of $C$-matrices as showed in \cite{FZ4}. And since a $c$-vector can never be 0, following \cite{NZ}, we denote $sign(c_{i;t})=1$ if $c_{i;t}\in\N^n$ and $sign(c_{i;t})=-1$ otherwise.
\end{Remark}

\begin{Corollary}\label{C-matrices determine seeds}
  Let $\A$ be a cluster algebra and $t,t\p\in\T_n$, then up to a relabeling $C_t=C_t\p$ if and only if $\Sigma_{t}=\Sigma_{t\p}$.
\end{Corollary}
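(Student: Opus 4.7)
The ``only if'' direction is immediate: a relabeling of $\Sigma_t$ carries the recursive computation of $C_t$ along any mutation path, so $C_t = C_{t\p}$ automatically holds up to the same relabeling. I will therefore focus on the ``if'' direction. Suppose $C_t = C_{t\p}$ up to a column permutation $\sigma$. The overall plan is to upgrade this equality of $C$-matrices to an equality of $G$-matrices $G_t = G_{t\p}$ (up to the same $\sigma$), from which the full seed equality will follow.

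For the upgrade, the natural tool is Theorem \ref{relation between G-matrix and C-matrix} (iii): $G^{B_{t_0};t_0}_{t}(C^{-B_{t_0}^{\top};t_0}_{t})^{\top}=I_n$, and likewise at $t\p$. This expresses $G_t$ as the inverse transpose of the $C$-matrix of $X_t$ in the ``doubly dual'' algebra $\A(-B_{t_0}^{\top})$, so it is enough to check that $C^{-B_{t_0}^{\top};t_0}_{t} = C^{-B_{t_0}^{\top};t_0}_{t\p}$ up to $\sigma$. The main obstacle is precisely this transfer: equality of $C$-matrices in $\A(B_{t_0})$ need not be obviously visible in $\A(-B_{t_0}^{\top})$, because the two algebras carry different exchange matrices at every step. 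I would attack this by induction on the length of a mutation sequence from $t_0$ to $t$ (and a separate one to $t\p$), using Proposition \ref{sign-synchronicity of c-vectors for B and -B^T} to guarantee that the sign patterns of $c$-vectors propagate in lockstep between the two algebras. Since the $c$-vector mutation is pinned down by these signs together with the current $B$-matrix, and $B$ itself mutates coherently in the two algebras (because $\mu_k(B^{\top}) = (\mu_k(B))^{\top}$), the sign-synchronicity should be enough to force equality of the full matrices at each step.

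Once $G_t = G_{t\p}$ up to $\sigma$ is established, the columns of these matrices are the $g$-vectors of the cluster variables in $X_t$ and $X_{t\p}$, so Theorem \ref{results in LP} (ii) (which identifies every cluster variable with $\rho_{g_{l;t}}$) yields $X_t = X_{t\p}$ up to $\sigma$. The exchange matrices are then recovered via Theorem \ref{GB=BC}: the relation $G_t B_t = B_{t_0} C_t$, combined with the invertibility of $G_t$ granted by Theorem \ref{relation between G-matrix and C-matrix} (ii), gives $B_t = G_t^{-1} B_{t_0} C_t$, and the equalities $G_t = G_{t\p}$ and $C_t = C_{t\p}$ (both up to $\sigma$) force $B_t = B_{t\p}$. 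Hence $\Sigma_t = \Sigma_{t\p}$ up to the relabeling $\sigma$, as desired.
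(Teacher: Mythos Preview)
Your overall strategy---pass from $C$-matrices to $G$-matrices via Theorem~\ref{relation between G-matrix and C-matrix}(iii), then to clusters via $\rho_{g}=x$, then recover $B_t$---matches the paper's route, and your step~4 (computing $B_t=G_t^{-1}B_{t_0}C_t$ from Theorem~\ref{GB=BC}) is a clean alternative to the paper's appeal to Theorem~\ref{a straight way}. The problem is entirely in step~1.

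The induction you sketch does not prove that $C_t^{B}=C_{t'}^{B}$ forces $C_t^{-B^\top}=C_{t'}^{-B^\top}$. Sign-synchronicity tells you that along \emph{any fixed path} the $c$-vectors in $\A(B)$ and in $\A(-B^\top)$ have matching sign patterns, so the two $C$-matrices evolve in parallel along that path. But $t$ and $t'$ are reached by \emph{different} paths in $\T_n$; nothing in your induction rules out that two distinct paths produce the same $C$-matrix in $\A(B)$ yet different ones in $\A(-B^\top)$. Put differently, you would need $C_t^{-B^\top}$ to be a function of $C_t^{B}$ alone, independent of the path---and your argument only shows both are functions of the path.

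The paper closes this gap by going through Theorem~\ref{a straight way} rather than by induction. Concretely: Theorem~\ref{relation between G-matrix and C-matrix}(iii) applied in $\A(-B^\top)$ reads $G_t^{-B^\top}(C_t^{B})^\top=I_n$, so the hypothesis $C_t^{B}=C_{t'}^{B}$ immediately gives $G_t^{-B^\top}=G_{t'}^{-B^\top}$. Then Theorem~5.13 of \cite{LP} yields $X_t=X_{t'}$ in $\A(-B^\top)$, and Theorem~\ref{a straight way} upgrades this to $\Sigma_t=\Sigma_{t'}$ in $\A(-B^\top)$, whence in particular $C_t^{-B^\top}=C_{t'}^{-B^\top}$. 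Now your steps~2--4 go through verbatim. So the missing ingredient is precisely the passage ``equal clusters $\Rightarrow$ equal seeds'' (Theorem~\ref{a straight way}), applied once in the companion algebra $\A(-B^\top)$; this is what replaces your induction.
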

\begin{proof}
  It is equivalent to $X_{t}=X_{t\p}$ up to the same relabeling by Theorem \ref{relation between G-matrix and C-matrix} and Theorem 5.13 
  in \cite{LP} claiming that $x_{i;t}=x_{i;t\p}$ if and only if $g_{i;t}=g_{i;t\p}$ for any $i\in[1,n]$ and $t,t\p\in\T_n$, which is equivalent to $\Sigma_{t}=\Sigma_{t\p}$ up to the same relabeling due to Theorem \ref{a straight way}.
\end{proof}

\begin{Corollary}
   Let $\A(B)$ and $\A(-B^\top)$ be two TSSS cluster algebra, their exchange graphes $E(\A(B))$ and $E(\A(-B^\top))$ are the same.
\end{Corollary}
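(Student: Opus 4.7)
The plan is to exhibit both $E(\A(B))$ and $E(\A(-B^\top))$ as quotients of the common $n$-regular tree $\T_n$ and show that the two equivalence relations used to form the quotient coincide. Since the edges of either exchange graph are inherited from $\T_n$ with the same labelings, proving the equality on the vertex level is enough: I need to show that for any $t,t\p\in\T_n$, the seeds $\Sigma_t$ and $\Sigma_{t\p}$ agree in $\A(B)$ if and only if they agree in $\A(-B^\top)$.

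First I would invoke Corollary \ref{C-matrices determine seeds} separately in each cluster algebra, obtaining $\Sigma_t=\Sigma_{t\p}$ in $\A(B)$ iff $C^{B;t_0}_t=C^{B;t_0}_{t\p}$ (up to a relabeling), and the analogous statement for $\A(-B^\top)$ with $C^{-B^\top;t_0}$. The crucial bridge between the two is Theorem \ref{relation between G-matrix and C-matrix}(iii): applied to $\A(B)$ with $t=t_0$, it gives $G^{B;t_0}_{t\p}(C^{-B^\top;t_0}_{t\p})^\top=I_n$, so $C^{-B^\top;t_0}_{t\p}=((G^{B;t_0}_{t\p})^\top)^{-1}$ is recovered from $G^{B;t_0}_{t\p}$ and vice versa. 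Combining this with the established fact (invoked in the proof of Corollary \ref{C-matrices determine seeds} via Theorem 5.13 of \cite{LP} and Theorem \ref{a straight way}) that $G$-matrices also determine seeds in $\A(B)$, I obtain the chain
\[C^{-B^\top;t_0}_t=C^{-B^\top;t_0}_{t\p}\iff G^{B;t_0}_t=G^{B;t_0}_{t\p}\iff \Sigma_t=\Sigma_{t\p}\text{ in }\A(B),\]
where the first equivalence is the pointwise inverse-transpose identity above.

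Putting the two halves together yields $\Sigma_t=\Sigma_{t\p}$ in $\A(B)$ iff $C^{-B^\top;t_0}_t=C^{-B^\top;t_0}_{t\p}$ iff $\Sigma_t=\Sigma_{t\p}$ in $\A(-B^\top)$, so the equivalence relations on $\T_n$ coincide and the two exchange graphs are identified edge-and-vertex. The only subtle point, and hence the main thing to check carefully, is that the ``up to relabeling'' clause in Corollary \ref{C-matrices determine seeds} transports cleanly through the map $G\mapsto((G^\top)^{-1})$: a column permutation $G\mapsto GP$ produces $((GP)^\top)^{-1}=((G^\top)^{-1})P^\top$, which is again a column permutation of $(G^\top)^{-1}$, so the relabeling compatibility is preserved on both sides. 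Once this bookkeeping is in place the argument is just a short chain of biconditionals.
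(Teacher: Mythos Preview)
Your proposal is correct and is essentially the same argument the paper gives, only spelled out in more detail: the paper simply says both graphs are quotients of $\T_n$ and that the equivalence relations coincide by Theorem~\ref{relation between G-matrix and C-matrix}(iii) together with Corollary~\ref{C-matrices determine seeds}, which is exactly your chain of biconditionals. One small slip: in your permutation check you should get $((GP)^\top)^{-1}=(G^\top)^{-1}P$ rather than $(G^\top)^{-1}P^\top$, since $(P^\top)^{-1}=P$ for a permutation matrix; the conclusion that the result is again a column permutation is unaffected.
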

\begin{proof}
  This follows from the facts that Both $E(\A(B))$ and $E(\A(-B^\top))$ are quotients of the $n$-regular tree and their fundamental groups are the same due to Theorem \ref{relation between G-matrix and C-matrix} (iii) and Corollary \ref{C-matrices determine seeds}.
\end{proof}

\begin{Corollary}
  Let $\A$ be a cluster algebra associated to $B$ and $t\in\T_n$, then $G_t^{B;t_0}\in Mat_{n\times n}(\Z_{\leqslant0})$ if and only if $G_t^{B;t_0}=-I_n$. And in this case, $B_t=B$.
\end{Corollary}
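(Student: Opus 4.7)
The plan is to handle the ``only if'' direction as trivial and focus on the forward implication: non-positivity of $G:=G_t^{B;t_0}$ forces $G=-I_n$ (up to the natural relabeling at $t$), after which $B_t=B$ follows in a few lines. First I would invoke Theorem \ref{relation between G-matrix and C-matrix}(ii) to obtain $G^{-1}=G_{t_0}^{-B_t;t}$, so that $G^{-1}$ is itself a $G$-matrix and is therefore row sign-coherent by Proposition \ref{sign-coherence of G-matrices}. At this stage we have two complementary ingredients: $G$ has all entries non-positive, while each row of $G^{-1}$ has a single definite sign $\epsilon_k\in\{+,-\}$.

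The core step will be an entry-by-entry analysis of the identity $GG^{-1}=I_n$. For each fixed $k$, every product $G_{ik}(G^{-1})_{kj}$ is non-positive when $\epsilon_k=+$ and non-negative when $\epsilon_k=-$, the sign depending only on $k$. Setting the off-diagonal entries of $GG^{-1}$ to zero and using the fact that sums of same-signed terms cannot cancel unless each summand vanishes, I would conclude that $G_{ik}(G^{-1})_{kj}=0$ whenever $i\neq j$. This collapses the supports of column $k$ of $G$ and of row $k$ of $G^{-1}$ into one common index $\pi(k)$; invertibility of $G$ then makes $\pi$ a permutation, and $G_{\pi(k),k}\leqslant-1$ together with $|\det G|=1$ forces every non-zero entry of $G$ to be $-1$, giving $G=-P_\pi$. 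Relabeling the edges at $t$ in $\T_n$ so that $\pi$ becomes the identity then yields $G=-I_n$.

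For the second claim, once $G=-I_n$ is established, Theorem \ref{relation between G-matrix and C-matrix}(iii) immediately gives $(C_t^{-B^\top;t_0})^\top=G^{-1}=-I_n$, so $C_t^{-B^\top;t_0}=-I_n$. Proposition \ref{sign-synchronicity of c-vectors for B and -B^T} then forces $C_t^{B;t_0}$ to share the support and sign pattern of $-I_n$, hence to be diagonal with strictly negative entries; the second identity of Theorem \ref{relation between G-matrix and C-matrix}(iii), $C_t^{B;t_0}\cdot C_{t_0}^{-B_t;t}=I_n$, forces $|\det C_t^{B;t_0}|=1$ and therefore $C_t^{B;t_0}=-I_n$. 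Substituting $G=C=-I_n$ into Theorem \ref{GB=BC} gives $-B_t=-B$, i.e., $B_t=B$.

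The hard part will be the combinatorial sign analysis in the second paragraph: one must argue that the non-positivity of $G$ and the row sign-coherence of $G^{-1}$ cannot conspire (across different signs $\epsilon_k$) to produce off-diagonal cancellations in $GG^{-1}=I_n$, and that exactly this rigidity collapses the supports of columns of $G$ and rows of $G^{-1}$ to single indices, reducing $G$ to the form $-P_\pi$.
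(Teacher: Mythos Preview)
Your approach coincides with the paper's: by Theorem~\ref{relation between G-matrix and C-matrix}(i) your $G^{-1}=G_{t_0}^{-B_t;t}$ is nothing but $(C_t^{-B^\top;t_0})^\top$, so invoking row sign-coherence of $G^{-1}$ is exactly invoking column sign-coherence of $C_t^{-B^\top;t_0}$, which is what the paper does. The genuine gap is the step you yourself flag as ``the hard part'': since $\epsilon_k$ may vary with $k$, the off-diagonal sums $(GG^{-1})_{ij}=\sum_k G_{ik}(G^{-1})_{kj}$ are \emph{not} same-signed, and your ``no cancellation'' argument does not apply as stated. The missing observation---which is precisely what the paper means by ``so $C_t^{-B^\top;t_0}\in Mat_{n\times n}(\Z_{\leqslant 0})$''---is to look instead at the diagonal of the \emph{other} product $G^{-1}G=I_n$: there $(G^{-1}G)_{kk}=\sum_j(G^{-1})_{kj}G_{jk}=1$ is a sum in which every term has sign $-\epsilon_k$ (all $(G^{-1})_{kj}$ share sign $\epsilon_k$ while all $G_{jk}\leqslant 0$), forcing $\epsilon_k=-$ for every $k$. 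Once $G^{-1}\leqslant 0$ is established, all terms in $(GG^{-1})_{ij}$ are non-negative and your collapse-to-a-signed-permutation argument goes through verbatim.

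Your derivation of $B_t=B$ by substituting $G=C=-I_n$ into Theorem~\ref{GB=BC} is more direct than the paper's, which instead factors $G_t$, $C_t$ and $B_t$ through products of the elementary matrices $E_{i_s;t_{s-1}}$, $F_{i_s;t_{s-1}}$ and then uses that each such $E$ is an involution to reverse the order of the product and conclude $B_t=(-I_n)B(-I_n)=B$.
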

\begin{proof}
  The proof is the same as skew-symmetrizable case. We repeat it here for completeness.

  The sufficiency is obvious. Assume $G_t^{B;t_0}\in Mat_{n\times n}(\Z_{\leqslant0})$, we need to show it can only be $-I_n$. Because of the column sign-coherence of $c$-vectors and Theorem \ref{relation between G-matrix and C-matrix} (iii), so $C_t^{-B^\top;t_0}\in Mat_{n\times n}(\Z_{\leqslant0})$. Therefore the only possible case to satisfy the above conditions is that $c_{i;t}^{-B^\top;t_0}=-e_j$ for some $j\in[1,n]$ and the cone generated by $\{g_{k;t}^{B;t_0}|k\neq i\}$ lies in the hyper plane $z_j=0$ for any $i\in[1,n]$, that is, $G_t^{B;t_0}=C_t^{-B^\top;t_0}=-I_n$. Then by Proposition \ref{sign-synchronicity of c-vectors for B and -B^T}, we also have $C_t^{B;t_0}\in Mat_{n\times n}(\Z_{\leqslant0})$, which leads to $C_t^{B;t_0}=-I_n$ similarly.

  Suppose there is a path $t_0\overset{i_1}-t_1\overset{i_2}-\cdots \overset{i_s}-t_s=t$ in $\T_n$. According to \cite{BFZ}, \cite{NZ} and the column sign-coherency of $C$-matrices, we can rewrite the mutation formulas of exchange matrices, $G$-matrices and $C$-matrices respectively as following
  \[B_t=E_{i_s;t_{s-1}}\cdots E_{i_1;t_0}BF_{i_1;t_0}\cdots F_{i_s;t_{s-1}},\]
  \begin{equation}\label{equation: mutation of G and C in matrix form}
    G_t^{B;t_0}=G_{t_0}^{B;t_0}E_{i_1;t_0}\cdots E_{i_s;t_{s-1}}\quad\text{ and }\quad C_t^{B;t_0}=C_{t_0}^{B;t_0}F_{i_1;t_0}\cdots F_{i_s;t_{s-1}},
  \end{equation}
  where $E_{k;t_r}=(e_{ij}^{k;t_r})$ and $F_{k;t_r}=(f_{ij}^{k;t_r})$ satisfies
  \begin{equation*}
    e_{ij}^{k;t_r}=\left\{\begin{array}{ll}
                            \delta_{ij}, & \text{if }j\neq k; \\
                            -1, & \text{if }i=j=k; \\
                            {[-sign(c_{k;t}^{B;t_0})b_{ik}^{t_r}]_+}, & \text{if }i\neq j=k.
                          \end{array}\right.
  \end{equation*}
  and
  \begin{equation*}
    f_{ij}^{k;t_r}=\left\{\begin{array}{ll}
                            \delta_{ij}, & \text{if }i\neq k; \\
                            -1, & \text{if }i=j=k; \\
                            {[sign(c_{k;t}^{B;t_0})b_{kj}^{t_r}]_+}, & \text{if }i=k\neq j.
                          \end{array}\right.
  \end{equation*}
  for any $k\in[1,n]$ and $r\in[0,s-1]$.
  Therefore,
  \[E_{i_1;t_0}\cdots E_{i_s;t_{s-1}}=-I_n\quad\text{ and }\quad F_{i_1;t_0}\cdots F_{i_s;t_{s-1}}=-I_n,\]
  which induces
  \[E_{i_s;t_{s-1}}\cdots E_{i_1;t_0}=-I_n.\]
  Hence
  \[B_t=E_{i_s;t_{s-1}}\cdots E_{i_1;t_0}BF_{i_1;t_0}\cdots F_{i_s;t_{s-1}}=-I_n B(-I_n)=B.\]
\end{proof}

\begin{Corollary}\label{c-vectors or g-vectors form a Z-basis}
  The column vectors or the transpose of row vectors of a $C$-matrix or a $G$-matrix form a $\Z$-basis of $\Z^n$.
\end{Corollary}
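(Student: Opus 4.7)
The plan is to derive the corollary as a direct consequence of Theorem \ref{relation between G-matrix and C-matrix}, since the identities stated there are all we need to conclude that $G$-matrices and $C$-matrices are unimodular (i.e., invertible over $\Z$), and that rows of one type are transposes of columns of the other type.

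First, observe that Theorem \ref{relation between G-matrix and C-matrix} (ii) gives $G^{B_t;t}_{t\p}G^{-B_{t\p};t\p}_t=I_n$ with both factors integer matrices; the same theorem (iii) gives $C^{B_t;t}_{t\p}C^{-B_{t\p};t\p}_t=I_n$. Therefore each $G$-matrix and each $C$-matrix admits an inverse with entries in $\Z$, which forces $\det G=\pm1$ and $\det C=\pm1$. A standard linear algebra fact over $\Z$ then yields that the column vectors of any such matrix form a $\Z$-basis of $\Z^n$, which settles the statement for columns of $G$-matrices and of $C$-matrices.

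Next, for the transposes of the row vectors, I would appeal to Theorem \ref{relation between G-matrix and C-matrix} (i): $(G^{B_{t\p};t\p}_{t})^{\top}=C^{B_{t}^{\top};t}_{t\p}$. Reading this equation from left to right shows that transposing the rows of a $G$-matrix produces the columns of a $C$-matrix (associated with the cluster algebra $\A(B_t^\top)$), which we have just shown to form a $\Z$-basis of $\Z^n$. Reading the same identity from right to left shows that transposing the rows of a $C$-matrix produces the columns of a $G$-matrix, again a $\Z$-basis. This covers the remaining two cases.

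There is essentially no obstacle here; the work has all been done in Theorem \ref{relation between G-matrix and C-matrix}, and the corollary is a formal consequence. The only minor care needed is to note that Theorem \ref{relation between G-matrix and C-matrix} is stated for a fixed TSSS matrix $B_t$ (with $\A(B_t)$, $\A(-B_{t\p})$, $\A(B_t^\top)$, $\A(-B_t^\top)$ all being TSSS cluster algebras), so the invocations of (i), (ii), (iii) above for an arbitrary $t$ are legitimate; combined with the fact that every $C$-matrix (respectively $G$-matrix) in $\A$ appears in some such identity, the conclusion holds universally.
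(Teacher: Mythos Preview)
Your proof is correct, but it takes a different route from the paper's. The paper does not invoke parts (ii) and (iii) of Theorem \ref{relation between G-matrix and C-matrix} for the column statement; instead it uses the column sign-coherence of $C$-matrices to rewrite each mutation as right-multiplication by one of the elementary matrices $E_{k;t_r}$ or $F_{k;t_r}$ (as in (\ref{equation: mutation of G and C in matrix form})), each of which has determinant $-1$, so that every $G$- or $C$-matrix is a product of unimodular matrices starting from $I_n$. The row statement is then deduced from Theorem \ref{relation between G-matrix and C-matrix} (i), exactly as you do.

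Your argument is shorter and more direct, but it leans on the heavier parts (ii) and (iii) of Theorem \ref{relation between G-matrix and C-matrix}, whose proof is deferred and intertwined with the large simultaneous induction in Section 7 (where, incidentally, Corollary \ref{c-vectors or g-vectors form a Z-basis} itself is invoked in the preliminary setup). The paper's route avoids this dependence: it needs only column sign-coherence --- established independently in \cite{LP} --- together with the elementary identity (i). So the paper's choice keeps the logical dependencies cleaner, while your approach extracts the corollary as an immediate formal consequence once the full duality theorem is in hand.
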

\begin{proof}
  According to the column sign-coherency of $C$-matrices and \cite{NZ}, the mutation formulas of $G$-matrices and $C$-matrices can be rewrite as (\ref{equation: mutation of G and C in matrix form}) via multiplying invertible integer matrices on the right hand-side. And since $C_{t_0}^{B;t_0}=G_{t_0}^{B;t_0}=I_n$, the column vectors of a $C$-matrix or $G$-matrix form a $\Z$-basis of $\Z^n$. Then the rest part follows from Theorem \ref{relation between G-matrix and C-matrix} (i).
\end{proof}

\section{Polytope Realization of $G$-matrices and $C$-matrices}
Based on the similarity between the mutation behavior of polytopes introduced in Section 3 and that of $C$-matrices under mutations of the initial seed, we want to figure out the concrete relation between them, or in other words, the realization of $g$-vectors (or $c$-vectors) in our polytopes corresponding to polytope functions.

In this section, we will show that we can choose a particular edge set of a special polytope such that primitive vectors of these edges form a $C$-matrix and moreover it is compatible with mutations. With this find we are able to realize all $G$-matrices and $C$-matrices in the mutation classes of the above polytope, which allows us to construct a complete fan containing $g$-fan.

In the sequel, denote $supp(u)=\{i\in[1,n]|u_i\neq0\}$ for a vector $u=(u_1,\cdots,u_n)\in\R^n$. For a matrix $B$ and its sub-index set $I$, denote by $B_I$ the matrix obtained from $B$ by deleting rows and columns parameterized by indices in $I$ and by $B[I]$ the matrix obtained from $B$ by deleting columns parameterized by indices not in $I$.

\begin{Definition}
  Let $S$ be a face of a polytope $N$ in $\R^n$ and $i\in[1,n]$,

  (i)\;$S$ is called a \textbf{top face} of $N$ with respect to $i$ if there is a vector $v\in\R^n$ such that the $i$-th entry of $v$ is positive and $S\subseteq\{p\in\R^n|v^\top p=0\}$, $N\nsubseteq\{p\in\R^n|v^\top p=0\}$ and $N\subseteq\{p\in\R^n|v^\top p\leqslant0\}$.

  (ii)\;$S$ is called a \textbf{bottom face} with respect to $i$ if there is a vector $v\in\R^n$ such that the $i$-th entry of $v$ is negative and $S\subseteq\{p\in\R^n|v^\top p=0\}$, $N\nsubseteq\{p\in\R^n|v^\top p=0\}$ and $N\subseteq\{p\in\R^n|v^\top p\leqslant0\}$.
\end{Definition}

In the rest of this section, we denote by $N$ the polytope corresponding to $\prod\limits_{i=1}^n\rho_{e_i}\rho_{-e_i+[-b_i]_+}=\prod\limits_{i=1}^nM_i$.
For any $j\in[1,n]$, let $q^+_j$ be the point $\sum\limits_{i=1}^n e_i$, $p^+_j$ be the point $\sum\limits_{i\neq j}^n e_i$, $q^-_j$ be the point $e_j$ and $p^-_j$ be the origin. Let $L^\epsilon=\{l^\epsilon_1,\cdots,l^\epsilon_n\}$ for $\epsilon\in\{+,-\}$, where $l^\epsilon_j=\overline{p^\epsilon_jq^\epsilon_j}$. For a mutation sequence $\mu$, denote by $\mu(L^\epsilon)=\{\mu(l^\epsilon_1),\cdots,\mu(l^\epsilon_n)\}$ the subset of $E(\mu(N))$ such that $\mu(l^\epsilon_i)$ correlating to $l^\epsilon_i$ under $\mu$.

Note that according to Theorem \ref{results in LP} (iv), Theorem \ref{compatibility degree} and Proposition \ref{compatibility degree equals 0}, $dim(\mu(N))=n$. And moreover due to the following correspondence of outer normal vectors and $g$-vectors in Theorem \ref{c-vectors in N} (ii) and row sign-coherence of $G$-matrices in Proposition \ref{sign-coherence of G-matrices}, there will not be indices $i,j,k\in[1,n]$ such that the facet of $\mu(N)$ determined by $\mu(L^\epsilon)\setminus\{\mu(l^\epsilon_i)\}$ is a top facet with respect to $k$ while the facet of $\mu(N)$ determined by $\mu(L^\epsilon)\setminus\{\mu(l^\epsilon_j)\}$ is a bottom facet with respect to $k$. Therefore, there will always be exactly one edge of $\mu_k\mu(N)$ correlating to $\mu(l^\epsilon_i)$ under $\mu_k$ for any $i\in[1,n]$, and thus $\mu(L^\epsilon)$ is uniquely determined for each $\mu$. Moreover, Corollary \ref{c-vectors or g-vectors form a Z-basis} and Theorem \ref{c-vectors in N} (i) ensure $ldim(S)=dim(S)=dim(S\p)=ldim(S\p)$ for any face $S$ of $N$ determined by a subset of $L^{\epsilon}$ and the face $S'$ of $\mu(N)$ correlating to $S$ under an arbitrary mutation sequence $\mu$. Therefore, the polytope projections in Theorem \ref{mutation of faces} are always isomorphisms here for the face $S'$ correlating to $S$ under an arbitrary mutation sequence $\mu$.

It should be more rigorous to include the above explanation in the induction of the proof below. However, we prefer to claim prior the well-definedness of $\mu(l^\epsilon_i)$ and use isomorphisms rather than polytope projections before the proof to simplify notations.

Let $\mu(p^\epsilon_j)$ and $\mu(q^\epsilon_j)$ be the vertices of $\mu(l^\epsilon_j)$ correlate respectively to $p^\epsilon_j$ and $q^\epsilon_j$ under $\mu$. For the same reason as above, these vertices are uniquely determined. Denote $\overrightarrow{\mu(L^\epsilon)}=(\overrightarrow{\mu(l^\epsilon_1)},\cdots,\overrightarrow{\mu(l^\epsilon_n)})$, where \[\overrightarrow{\mu(l^\epsilon_i)}=\frac{(\mu(q^\epsilon_i)-\mu(p^\epsilon_i))}{gcd(\mu(q^\epsilon_i)-\mu(p^\epsilon_i))},\]
with $gcd(\mu(q^\epsilon_i)-\mu(p^\epsilon_i))$ represents the (positive) greatest common divisor of entries in $\mu(q^\epsilon_i)-\mu(p^\epsilon_i)$.

Due to Theorem \ref{results in LP} (iii), $\overrightarrow{\mu(l^\epsilon_i)}$ is sign-coherent and non-zero for any $i\in[1,n]$. So similar to $c$-vectors, we denote $sign(\overrightarrow{\mu(l^\epsilon_i)})=1$ if $\overrightarrow{\mu(l^\epsilon_i)}\in\N^n$ and otherwise $sign(\overrightarrow{\mu(l^\epsilon_i)})=-1$.

When $n\geqslant2$, denote by $\overrightarrow{\mu(l^\epsilon_i)}^\bot$ the primitive outer normal vector of the facet of $\mu(N)$ determined by $\mu(L^\epsilon)\setminus\{\mu(l^\epsilon_i)\}$ and $\overrightarrow{\mu(L^\epsilon)}^\bot=(\overrightarrow{\mu(l^\epsilon_1)}^\bot,\cdots,\overrightarrow{\mu(l^\epsilon_n)}^\bot)$.

\begin{Lemma}\label{positive vector and top facet}
  In the above settings, for any $i\in[1,n]$ and $n\geqslant2$,

  (i)\;$\mu(l^\epsilon_i)$ is a top edge of $N$ with respect to $i$ if and only if there is $j\neq i\in[1,n]$ such that the $i$-th entry of $\overrightarrow{\mu(l^\epsilon_1)}^\bot$ is positive.

  (ii)\;$\mu(l^\epsilon_i)$ is a bottom edge of $N$ with respect to $i$ if and only if there is $j\neq i\in[1,n]$ such that the $i$-th entry of $\overrightarrow{\mu(l^\epsilon_1)}^\bot$ is negative.
\end{Lemma}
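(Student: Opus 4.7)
The plan is to interpret $\overrightarrow{\mu(l^\epsilon_j)}^\bot$ as the outer normal of the facet $F_j$ of $\mu(N)$ determined by $\mu(L^\epsilon)\setminus\{\mu(l^\epsilon_j)\}$, and then use the local structure of $\mu(N)$ at a common vertex $v_0$ of the edges $\mu(L^\epsilon)$ to translate ``top edge'' into a statement about these outer normals. The backward implication is direct: if some $j\neq i$ satisfies $(\overrightarrow{\mu(l^\epsilon_j)}^\bot)_i>0$, then $F_j$ contains $\mu(l^\epsilon_i)$ (as $i\neq j$) and supplies a supporting hyperplane of $\mu(N)$ with outer normal having positive $i$-th entry, witnessing that $\mu(l^\epsilon_i)$ is a top edge with respect to $i$. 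Part (ii) is symmetric, with ``negative'' replacing ``positive'' throughout.

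For the forward implication, I would first establish that the $n$ edges in $\mu(L^\epsilon)$ share a common endpoint $v_0$ in $\mu(N)$: the points $q_j^+$ (respectively $p_j^-$) are independent of $j$ in $N$, so their correlates under $\mu$ coincide, giving the common vertex $v_0$. Since the matrix $\overrightarrow{\mu(L^\epsilon)}$ will be identified with the $C$-matrix $C_{t_0}^{\epsilon B_t;t}$ (an ingredient whose proof runs in tandem), it is invertible by Corollary \ref{c-vectors or g-vectors form a Z-basis}, so the $n$ edges emanate from $v_0$ with linearly independent primitive directions. This forces $v_0$ to be a simple vertex of $\mu(N)$ whose incident facets are exactly $F_1,\dots,F_n$, and $\mu(l^\epsilon_i)\subseteq F_j$ precisely when $j\neq i$. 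Any supporting hyperplane containing $\mu(l^\epsilon_i)$ must pass through $v_0$, so its outer normal lies in the normal cone of $v_0$, which is the simplicial cone generated by $\{\overrightarrow{\mu(l^\epsilon_k)}^\bot\}_{k\in[1,n]}$. Orthogonality to $\overrightarrow{\mu(l^\epsilon_i)}$, together with the fact that $\overrightarrow{\mu(l^\epsilon_i)}^\bot$ is not orthogonal to $\overrightarrow{\mu(l^\epsilon_i)}$ (since $F_i$ does not contain the edge), restricts the outer normal to the subcone generated by $\{\overrightarrow{\mu(l^\epsilon_j)}^\bot:j\neq i\}$. Writing $v=\sum_{j\neq i}a_j\overrightarrow{\mu(l^\epsilon_j)}^\bot$ with $a_j\geqslant 0$ and $v_i>0$, at least one $(\overrightarrow{\mu(l^\epsilon_j)}^\bot)_i$ must be positive, which closes the argument.

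The main obstacle is rigorously establishing that $v_0$ is a simple vertex of $\mu(N)$ with exactly the $n$ edges $\mu(L^\epsilon)$ emanating and exactly the $n$ facets $F_1,\dots,F_n$ incident. Linear independence of the edge directions makes $v_0$ locally look like a simplicial cone, but ruling out extra edges at $v_0$, or extra facets containing some $\mu(l^\epsilon_i)$, requires a careful induction on the length of $\mu$, using the polytope mutation rules from Section 3 to track how the local combinatorics at $v_0$ transform under a single mutation $\mu_k$. This induction is most naturally intertwined with the induction proving Theorem \ref{c-vectors in N}, where the row sign-coherence of $G$-matrices (Proposition \ref{sign-coherence of G-matrices}) and the column sign-coherence of $C$-matrices (used to organize $\mu(L^\epsilon)$) jointly ensure that correlation sends edges to edges and vertices to vertices without pathological degenerations, so that the facet $F_j$ remains well-defined with a unique primitive outer normal at every stage.
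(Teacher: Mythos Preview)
Your argument is correct and, in substance, is exactly what the paper does---only the paper compresses it to a single line: ``The Lemma follows from the definition of top or bottom face, the definition of outer normals and the fact that $\dim(\mu(N))=n$.'' The paper can be this terse because the two paragraphs immediately preceding the lemma already take for granted (as part of the joint induction with Theorem~\ref{c-vectors in N}) precisely the structural facts you spell out: the edges $\mu(L^\epsilon)$ share a common vertex, each subset of $\mu(L^\epsilon)$ determines a face of the expected dimension, and the facets $F_j$ are well-defined with linearly independent outer normals. The paper even flags this explicitly: ``It should be more rigorous to include the above explanation in the induction of the proof below.'' With that setup in hand, the normal cone of $\mu(l^\epsilon_i)$ is the simplicial cone on $\{\overrightarrow{\mu(l^\epsilon_j)}^\bot:j\neq i\}$, and both implications are immediate---which is exactly your argument. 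So there is no genuine difference in approach; you have simply made explicit the simplicity of $v_0$ that the paper packs into its preamble.
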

\begin{proof}
  The Lemma follows from the definition of top or bottom face, the definition of outer normals and the fact that $dim(\mu(N))=n$.
\end{proof}

Given any subset $I\subseteq[1,n]$, let $N_{[1,n]\setminus I}$ be the polytope corresponding to $\prod\limits_{i\in I}M_i|_{\A(B_{[1,n]\setminus I})}$. It can be embedded in $\R^n$ via $\gamma_{[1,n]\setminus I;0}$ when $\epsilon=-$ or via $\gamma_{[1,n]\setminus I;\sum\limits_{i\notin I}e_i}$ when $\epsilon=+$. According to the discussion in Section 3, any mutation sequence $\mu=\mu_{i_r}\cdots\mu_{i_1}$ in $\A$ induces a mutation sequence $\tilde{\mu}^\epsilon=\mu^\epsilon_{j_r}\cdots\mu^\epsilon_{j_1}$ in $\A(B_{[1,n]\setminus I})$ such that $\mu^\epsilon_{j_k}$ is either a single mutation or $\emptyset$ for any $k\in[1,r]$ and a non-negative polytope isomorphism $\tau^\epsilon_s$ of $\mu_{j_s}^\epsilon\cdots\mu_{j_1}^\epsilon(N_{[1,n]\setminus I})|_{\A(B_{[1,n]\setminus I})}$ and the $|I|$-dimensional face of $\mu_{i_s}\cdots\mu_{i_1}(N)$ determined by $\{\mu_{i_s}\cdots\mu_{i_1}(l^\epsilon_i)|i\in I\}$ for any $s\in[0,r]$. In this special case, as the $|I|$-dimensional face of $\mu_{i_s}\cdots\mu_{i_1}(N)$ determined by $\{\mu_{i_s}\cdots\mu_{i_1}(l^\epsilon_i)|i\in I\}$ is unique, $\tilde{\mu}^\epsilon$ is uniquely determined by $\mu$ and $\epsilon$.

\begin{Theorem}\label{c-vectors in N}
  Let $\A$ be a TSSS cluster algebra and $\mu=\mu_{i_r}\cdots\mu_{i_1}$ be a mutation sequence from $\Sigma_{t_0}$ to $\Sigma_t$.

  (i)\;$C_{t_0}^{\epsilon B_t;t}=\overrightarrow{\mu(L^\epsilon)}$ for $\epsilon\in\{+,-\}$.

  (ii)\;$G_{t_0}^{-\epsilon B_t^\top;t}=\epsilon\overrightarrow{\mu(L^\epsilon)}^\bot$ for $\epsilon\in\{+,-\}$ when $n\geqslant2$.

  (iii)\;$\mu^\epsilon_{j_s}\neq\emptyset$ if and only if $i_s\notin \bigcup\limits_{i\in [1,n]\setminus I}supp(g_{i;t_0}^{-\epsilon B_{t(s-1)}^\top;t(s-1)})$ for any $s\in[1,r]$, where $t(s-1)$ is connected to $t_0$ by the path corresponding to $\mu_{i_{s-1}}\cdots\mu_{i_1}$. In this case, there is a unique index $j\in I$ satisfying
  $$\pi_{[1,n]\setminus I}((G_{t_0}^{-\epsilon B_{t(s-1)}^\top;t(s-1)})^\top[i_s])=(G_{t_0}^{-\epsilon (B_{[1,n]\setminus I})_{t\p(s-1)}^\top;t\p(s-1)})^\top[j],$$
  where $t\p(s-1)$ is connected to $t_0$ by the path corresponding to $\mu^\epsilon_{j_{s-1}}\cdots\mu^\epsilon_{j_1}$ in $\A(B_{[1,n]\setminus I})$ and $\mu^\epsilon_{j_s}=\mu_j$.

  (iv)\;There is a class of bijections $\{\varphi_s\}_{s\in[0,r]}$ between face complexes of polytopes
  \[\varphi_s:\quad\mu_{i_{s}}\cdots\mu_{i_1}(N)|_{\A(B)}\quad\overset{\cong}\longrightarrow\quad \mu_{i_{s}}\cdots\mu_{i_1}(N)|_{\A(-B^\top)}\]
  such that for any $0\leqslant s\leqslant k\leqslant r$ and any face $S$ of $\mu_{i_{s}}\cdots\mu_{i_1}(N)|_{\A(B)}$,

  (1)\;$dim(S)=dim(\varphi_s(S))$. And $deg_{x_j}(p)=0$ for any point $p\in S$ if and only if $deg_{x_j}(p\p)=0$ for any point $p\p\in \varphi_s(S)$.

  (2)\;$S$ is a section of $\mu_{i_{s}}\cdots\mu_{i_1}(N)|_{\A(B)}$ if and only if $\varphi_s(S)$ is a section of $\mu_{i_{s}}\cdots\mu_{i_1}(N)|_{\A(-B^\top)}$. Hence the mutation sequence for $S$ induced by $\mu_{i_k}\cdots\mu_{i_{s+1}}$ in $\A(B)$ equals to that for $\varphi_s(S)$ induced by $\mu_{i_k}\cdots\mu_{i_{s+1}}$ in $\A(-B^\top)$.

  (3)\;let $S\p$ be the face of $\mu_{i_{k}}\cdots\mu_{i_1}(N)|_{\A(B)}$ correlating to $S$ under $\mu_{i_k}\cdots\mu_{i_{s+1}}$, then $\varphi_k(S\p)$ is the face of $\mu_{i_{k}}\cdots\mu_{i_1}(N)|_{\A(-B^\top)}$ correlating to $\varphi_s(S)$ under $\mu_{i_k}\cdots\mu_{i_{s+1}}$.
\end{Theorem}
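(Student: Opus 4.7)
The plan is to establish all four parts of Theorem~\ref{c-vectors in N} by simultaneous induction on $r$, the length of the mutation sequence $\mu=\mu_{i_r}\cdots\mu_{i_1}$. The base case $r=0$ is a direct check: the polytope $N$ is the unit $n$-cube, so the edge directions give $\overrightarrow{L^\pm}=I_n$; the facet containing $L^-\setminus\{l^-_i\}$ (resp.\ $L^+\setminus\{l^+_i\}$) has outer normal $-e_i$ (resp.\ $e_i$), so $\overrightarrow{L^-}^\bot=-I_n$ and $\overrightarrow{L^+}^\bot=I_n$, matching $C_{t_0}^{\epsilon B;t_0}=I_n$ and $G_{t_0}^{-\epsilon B^\top;t_0}=\epsilon\overrightarrow{L^\epsilon}^\bot=I_n$. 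Part (iii) is vacuous, and we take $\varphi_0$ to be the identity since at $t_0$ the cube $N$ has the same underlying lattice structure in both $\A(B)$ and $\A(-B^\top)$.

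For the inductive step, set $k=i_{r+1}$ and assume the theorem for $\mu$. Recall the three-step description from Section~3 of $\mu_k$ acting on polytopes: (a) modify the $k$-sections, (b) translate by $-[h_k]_+e_k$, (c) apply the $\R$-linear map $\phi$ sending $e_j\mapsto e_j+[b_{kj}^t]_+e_k$ for $j\neq k$ and $e_k\mapsto -e_k$. One checks $\phi^2=I$, so outer normals transform by $\phi^\top$. For part (i), split by whether $\overrightarrow{\mu(l^\epsilon_i)}$ is parallel to $e_k$: if yes, the edge is itself a $k$-section, gets reshaped in step (a), and is inverted by $\phi$, giving primitive direction the opposite sign; if no, the edge is merely translated and sent by $\phi$, so its primitive direction becomes $\phi(\overrightarrow{\mu(l^\epsilon_i)})$. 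A term-by-term comparison of the resulting vector with the $c$-vector mutation formula extracted from~\eqref{equation: mutation of B}, using the sign-coherence of the $k$-th column of $C_{t_0}^{\epsilon B_t;t}$ guaranteed by the inductive hypothesis of (i) combined with Proposition~\ref{sign-coherence of G-matrices}, shows the two coincide.

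For part (ii), the outer normal of the facet determined by $\mu(L^\epsilon)\setminus\{\mu(l^\epsilon_i)\}$ transforms from $v$ to $\phi^\top v$, whose $k$-entry negates and whose $j$-entry for $j\neq k$ picks up $v_k[b_{kj}^t]_+$. To match this with the $g$-vector recurrence~\eqref{equation: mutation of g-vectors} applied in the initial-side direction to $G_{t_0}^{-\epsilon B_t^\top;t}$, the residual discrepancy is an $e_k$-multiple of $b_j^{t_0}$; this is absorbed by the identity $GB=BC$ supplied by Theorem~\ref{GB=BC}, combined with the already-proven~(i) at the current level and the sign-coherence of the relevant $c$-vector. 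For part (iii), Theorem~\ref{mutation of faces}(ii) shows that $\mu^\epsilon_{j_s}=\emptyset$ exactly when the face determined by $\{\mu_{i_{s-1}}\cdots\mu_{i_1}(l^\epsilon_i)\mid i\in I\}$ is not incident to any edge parallel to $e_{i_s}$; by (ii) this is equivalent to the stated support condition on $g_{i;t_0}^{-\epsilon B_{t(s-1)}^\top;t(s-1)}$ for $i\notin I$, and the unique $j\in I$ together with the projection identity follow from the isomorphism in Theorem~\ref{mutation of faces} between the sub-polytope and the corresponding polytope in $\A(B_{[1,n]\setminus I})$.

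For part (iv), define $\varphi_{r+1}$ from $\varphi_r$ by transport: for each face $S'$ of $\mu_k\mu(N)|_{\A(B)}$ correlating to some face $S$ of $\mu(N)|_{\A(B)}$, let $\varphi_{r+1}(S')$ be the face of $\mu_k\mu(N)|_{\A(-B^\top)}$ correlating to $\varphi_r(S)$ under the same $\mu_k$. Well-definedness and properties (1)--(3) rest on the crucial observation that the sign patterns of $B$ and $-B^\top$ coincide, since $(-B^\top)_{ij}=-b_{ji}$ shares its sign with $b_{ij}$ by sign-skew-symmetry; consequently the qualitative behaviour of each step (a)--(c)---which faces are sections, which contain edges parallel to $e_j$, which $k$-sections are trivial---is identical in the two algebras, even though edge lengths and weights differ. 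The chief obstacle is expected to be the matching in (ii), where the simple action of $\phi^\top$ on normals must be reconciled with the genuinely more elaborate $g$-vector recurrence~\eqref{equation: mutation of g-vectors} via the $GB=BC$ identity; a secondary difficulty is the delicate bookkeeping needed in (iv) to ensure that $\varphi_{r+1}$ respects dimension, degree vanishing, and section structure when faces undergo the possibly nontrivial reshaping of step (a).
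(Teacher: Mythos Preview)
Your overall inductive architecture is close to the paper's, but there are two genuine gaps.

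First, your description of how edge directions transform in part~(i) is too coarse. You write that if $\overrightarrow{\mu(l^\epsilon_i)}$ is not parallel to $e_k$ then the edge ``is merely translated and sent by $\phi$''. This is false: step~(a) of the polytope mutation moves every point $q$ at the top of its $k$-section by $\deg_{x_k}(\hat Y^p X^h)\,e_k$, which depends linearly on the non-$k$ coordinates of $q$, so top edges are sheared before $\phi$ is applied. The correct edge-direction formula (the paper's \eqref{equation: mutation of l^epsilon_j v1}) splits into a top case and a bottom case, with $\sum_s[-b^{t}_{ks}]_+ a'_{sj}$ versus $\sum_s[b^{t}_{ks}]_+ a'_{sj}$, and the case is decided by the sign of the $i_r$-entry of the outer normals $\overrightarrow{\mu'(l^\epsilon_j)}^\bot$, i.e.\ by $\mathrm{sign}(c_{i_r;t'}^{-\epsilon B;t_0})$. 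The analogous issue occurs for normals in~(ii).

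Second, and more seriously, matching this case split with the $c$-vector recurrence requires $\mathrm{sign}(c_{i_r;t'}^{-\epsilon B;t_0})=\mathrm{sign}(c_{i_r;t'}^{\epsilon B^\top;t_0})$, which is precisely the sign-synchronicity of Proposition~\ref{sign-synchronicity of c-vectors for B and -B^T}, not mere sign-coherence. You invoke only Proposition~\ref{sign-coherence of G-matrices}. In the paper, Proposition~\ref{sign-synchronicity of c-vectors for B and -B^T} is not available in advance: it is proved \emph{simultaneously} with Theorem~\ref{c-vectors in N} and Theorem~\ref{relation between G-matrix and C-matrix}(ii),(iii) via a \emph{double} induction on rank $n$ and on the length of $\mu$. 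The rank induction is essential---sign-synchronicity at the new vertex $t$ is deduced (step~(e) of the paper's proof) by passing to rank $n-1$ facets via part~(iii), applying the inductive hypothesis there, and then using part~(iv) to transfer the conclusion back. Your single induction on $r$ cannot supply this ingredient, so the comparison in~(i) and~(ii) does not close. Similarly, your derivation of~(ii) from~(i) via ``$\phi^\top$ plus $GB=BC$'' is not how the paper proceeds: once~(i) is established, (ii) follows immediately from Theorem~\ref{relation between G-matrix and C-matrix}(iii), which is itself part of the same simultaneous induction.
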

\begin{Remark}\label{comparison with Muller's result}
  In the skew-symmetrizable case, Muller showed in \cite{M} that the support of cluster scattering diagram $\mathcal{D}_{[1,n]\setminus I}$ of sub-cluster algebra $\A(B_{[1,n]\setminus I})$ can be embedded into the support of cluster scattering diagram $\mathcal{D}$ of $\A(B)$ by times $\R^{[1,n]\setminus I}$. Since a mutation sequence $\mu$ in $\A(B)$ can be explained as a sequence of walls crossed by a curve in $\mathcal{D}$, it naturally induces a mutation sequence $\tilde{\mu}$ in $\A(B_{[1,n]\setminus I})$ by ignoring crossings not incident to walls in $\mathcal{D}_{[1,n]\setminus I}\times\R^{[1,n]\setminus I}$, or equivalently, only keeping crossings at walls whose normals become 0 under $\pi_{I}$. According to this result, the complete fan induced by $\mathcal{D}_{[1,n]\setminus I}$ under embedding can be regarded as a coarser fan obtained from that induced by $\mathcal{D}$ via gluing some cones together.

  In TSSS case, Theorem \ref{mutation of faces} claims $\tilde{\mu}$ can be determined by how a face of a polytope is changed under $\mu$. In Theorem \ref{c-vectors in N} (i) and (ii), by choosing the particular polytope $N$, we are able to relate $c$-vectors with primitive vectors of a set of edges of polytopes mutation equivalent to $N$. Therefore we can now restate Theorem \ref{mutation of faces} via $g$-vectors as Theorem \ref{c-vectors in N} (iii), a result similar to that in \cite{M} for $g$-fans together with Proposition \ref{compatibility degree and multiplication} and the fact that $\gamma_{[1,n]\setminus I;0}(N_{\pi_{[1,n]\setminus I}(h)}|_{\A(B_{[1,n]\setminus I})})$ equals the intersection of the plane $\bigcap\limits_{j\notin I}z_j=0$ and $N_h|_{\A(B)}$ for any $h\in\Z^n$. It is restricted to $g$-fans because our discussion is based on mutations of initial seed, so we could not explain crossings beyond $g$-fans in this way.

  However, on the other hand, recall the definition of the polytope $N$, which is the Newton polytope of $\prod\limits_{i=1}^n\rho_{e_i}\rho_{-e_i+[-b_i]_+}$, so during mutations we in fact only focus on cluster monomials. By taking all polytope functions into consideration, we are able to reach the mysterious region outside the $g$-fan.
\end{Remark}

\begin{Proof}[\textit{proof of Proposition \ref{sign-synchronicity of c-vectors for B and -B^T}, Theorem \ref{relation between G-matrix and C-matrix} (ii), (iii) and Theorem \ref{c-vectors in N}}]
  We take a double induction on rank $n$ and on the length of $\mu$ to prove these results simultaneously. When $n=1$, Theorem \ref{c-vectors in N} (ii) (iii) and (iv) are trivial, while the others are easy to check. Now we assume they hold for cluster algebras with rank less than $n\geqslant2$ and then consider the rank $n$ case. For the second induction, when $\mu=\emptyset$, everything is trivial. In particular, $\varphi_0$ is induced by identity for Theorem \ref{c-vectors in N} (iv). We assume they are true for $\mu\p=\mu_{i_{r-1}}\cdots\mu_{i_1}$ from $\Sigma_{t_0}$ to $\Sigma_{t\p}$ and then let $\mu=\mu_{i_r}\mu\p$.

  Next we will prove in order the following statements to complete this proof:\\
  (a)\;The inductive assumption of Proposition \ref{sign-synchronicity of c-vectors for B and -B^T} for $t\p$ induces Theorem \ref{relation between G-matrix and C-matrix} (ii) and thus (iii) for $t$.\\
  (b)\;Theorem \ref{c-vectors in N} (i) and (ii) for $\mu\p$, Proposition \ref{sign-synchronicity of c-vectors for B and -B^T} for $t\p$ and Theorem \ref{relation between G-matrix and C-matrix} (ii) for $t$ induces Theorem \ref{c-vectors in N} (i) and (ii) for $\mu$.\\
  (c)\;Theorem \ref{c-vectors in N} (i) and (ii) for $\mu$ and inductive assumption of Theorem \ref{relation between G-matrix and C-matrix} (iii) lead to Theorem \ref{c-vectors in N} (iii) for $\mu$.\\
  (d)\;Inductive assumption of Theorem \ref{relation between G-matrix and C-matrix} (iii) induces Theorem \ref{c-vectors in N} (iv) for $\varphi_r$.\\
  (e)\;Theorem \ref{c-vectors in N} (iii) and (iv) for $\mu$ and the inductive assumption of Theorem \ref{c-vectors in N} (ii), (iii) and Proposition \ref{sign-synchronicity of c-vectors for B and -B^T} induce the sign-synchronicity of $G_{t}^{B;t_0}$ and $G_{t}^{-B^\top;t_0}$ as well as that of $C_{t}^{B;t_0}$ and $C_{t}^{-B^\top;t_0}$.
  \vskip 3mm

  (a)\;Proof of this statement is exactly that of Theorem \ref{relation between G-matrix and C-matrix} (ii) and (iii).

  (b)\;From Theorem \ref{relation between G-matrix and C-matrix} (i) and the mutation formula (\ref{equation: mutation of g-vectors}) of $g$-vectors, we get the following mutation formula of $c$-vectors with respective to the mutation of initial seed: let $C_{t_0}^{\epsilon B_{t};t}=(c_{ij})_{n\times n}$ and $C_{t_0}^{\epsilon B_{t\p};t\p}=(c\p_{ij})_{n\times n}$, then
  \begin{equation}\label{equation: mutation of c-vectors in proof}
    c_{ij}=\left\{\begin{array}{ll}
                      c\p_{ij}, & \text{if $i\neq i_{r}$;} \\
                      -c\p_{i_{r}j}+\sum\limits_{s=1}^{n}[\varepsilon\epsilon b_{i_{r}s}^{t\p}]_+c\p_{sj}-\sum\limits_{s=1}^n [\varepsilon c_{si_{r};t\p}^{\epsilon B^\top;t_0}]_+\epsilon b_{sj}^{t_0}, & \text{if $i=i_{r}$.}
                    \end{array}\right.
  \end{equation}
  where $\varepsilon=\pm1$. Because of the sign-coherence of $c$-vectors, we may follow the trick Nakanishi and Zelevinsky applied in \cite{NZ}, that is, choose $\varepsilon=-sign(c_{i_{r};t\p}^{\epsilon B^\top;t_0})$ to simplify the second equation as $c_{i_{r}j}=-c\p_{i_{r}j}+\sum\limits_{s=1}^{n}[-sign(c_{i_{r};t\p}^{\epsilon B^\top;t_0})\epsilon b_{i_{r}s}^{t\p}]_+c\p_{sj}$.

  On the other hand, we can calculate how $\overrightarrow{\mu\p(L^\epsilon)}$ is changed under $\mu_{i_{r}}$. According to the mutation process of a polytope presented in Section 3, it can be checked that for any $j\in[1,n]$, denote $\overrightarrow{\mu\p(l^\epsilon_j)}=(a\p_{1j},\cdots,a\p_{nj})^\top$ and $\overrightarrow{\mu(l^\epsilon_j)}=(a_{1j},\cdots,a_{nj})^\top$, then
  \begin{equation}\label{equation: mutation of l^epsilon_j v1}
    a_{ij}=\left\{\begin{array}{ll}
                 a\p_{ij}, & \begin{split}
                                \text{if $i\neq i_{r}$;}
                             \end{split} \\\\
                 -a\p_{i_{r}j}+\sum\limits_{s=1}^n[b_{i_{r}s}^{t\p}]_+a\p_{sj}, &\begin{split}
                                                                            &\text{if $i=i_{r}$ and $\mu\p(l^\epsilon_j)$}\\
                                                                            &\text{is a bottom edge with respect to $i_{r}$;}
                                                                          \end{split}\\\\
                 -a\p_{i_{r}j}+\sum\limits_{s=1}^n[-b_{i_{r}s}^{t\p}]_+a\p_{sj}, & \begin{split}
                                                                            &\text{if $i=i_{r}$ and $\mu\p(l^\epsilon_j)$} \\
                                                                            &\text{is a top edge with respect to $i_{r}$.}
                                                                          \end{split}
               \end{array}\right.
  \end{equation}
  According to Lemma \ref{positive vector and top facet}, Theorem \ref{relation between G-matrix and C-matrix} (i) and Theorem \ref{c-vectors in N} (ii) for $\mu\p$, $\mu\p(l^\epsilon_j)$ being a bottom edge with respect to $i_{r}$ if and only if $-\epsilon sign(c_{i_{r};t\p}^{-\epsilon B_{t_0};t_0})=1$ while it being a top edge with respect to $i_{r}$ is equivalent to $\epsilon sign(c_{i_{r};t\p}^{-\epsilon B_{t_0};t_0})=1$. Therefore, (\ref{equation: mutation of l^epsilon_j v1}) can be rewritten as
  \begin{equation}\label{equation: mutation of l^epsilon_j v2}
    a_{ij}=\left\{\begin{array}{ll}
                 a\p_{ij}, & \text{if $i\neq i_{r}$;} \\
                 -a\p_{ij}+\sum\limits_{s=1}^n[-\epsilon sign(c_{i_{r};t\p}^{-\epsilon B_{t_0};t_0})b_{i_{r}s}^{t\p}]_+a\p_{sj}, & \text{if $i=i_{r}$.}
               \end{array}\right.
  \end{equation}
  Comparing (\ref{equation: mutation of c-vectors in proof}) and $(\ref{equation: mutation of l^epsilon_j v2})$, $C_{t_0}^{\epsilon B_t;t}=\overrightarrow{\mu(L^\epsilon)}$ follows from $C_{t_0}^{\epsilon B_{t\p};t\p}=\overrightarrow{\mu\p(L^\epsilon)}$ and Proposition \ref{sign-synchronicity of c-vectors for B and -B^T} for $t\p$. Then $G_{t_0}^{-\epsilon B_t^\top;t}=\epsilon\overrightarrow{\mu(L^\epsilon)}^\bot$ follows from $C_{t_0}^{\epsilon B_t;t}=\overrightarrow{\mu(L^\epsilon)}$ and Theorem \ref{relation between G-matrix and C-matrix} (iii) for $t$.

  (c)\;Following from Theorem \ref{mutation of faces}, $\mu^\epsilon_{j_s}\neq\emptyset$ if and only if there is a segment parallel to $e_{i_s}$ in the face of $\mu_{i_{s-1}}\cdots\mu_{i_1}(N)$ determined by $\{\mu_{i_{s-1}}\cdots\mu_{i_1}(l^\epsilon_i)|i\in I\}$, which is equivalent to the existence of a segment parallel to $e_{i_s}$ in the plane generated by $\{\mu_{i_{s-1}}\cdots\mu_{i_1}(l^\epsilon_i)|i\in I\}$. Therefore, $\mu^\epsilon_{j_s}\neq\emptyset$ if and only if $i_s\notin supp(\overrightarrow{\mu_{i_{s-1}}\cdots\mu_{i_1}(l^\epsilon_j)}^\bot)$ for any $j\notin I$, which is equivalent to $i_s\notin \bigcup\limits_{i\in [1,n]\setminus I}supp(g_{i;t_0}^{-\epsilon B_{t(s-1)}^\top;t(s-1)})$ by Theorem \ref{c-vectors in N} (ii) for $\mu_{i_{s-1}}\cdots\mu_{i_1}$.

  Next assume $\mu^\epsilon_{j_s}\neq\emptyset$. Let $\Gamma_{s-1}^\epsilon=(\tilde{\tau}_{s-1}^\epsilon(e_i))_{i\in I}$. Then
  $$\overrightarrow{\mu_{i_{s-1}}\cdots\mu_{i_1}(l_i^\epsilon)}= \tilde{\tau}_{s-1}^\epsilon(\overrightarrow{\mu^\epsilon_{j_{s-1}}\cdots\mu^\epsilon_{j_1}(l_i^\epsilon)|_{\A(B_{[1,n]\setminus I})}}) =\Gamma_{s-1}^\epsilon\overrightarrow{\mu^\epsilon_{j_{s-1}}\cdots\mu^\epsilon_{j_1}(l_i^\epsilon)|_{\A(B_{[1,n]\setminus I})}}$$
  for any $i\in I$. Following Theorem \ref{c-vectors in N} (i) for $\mu_{i_{s-1}}\cdots\mu_{i_1}$, we get
  $$C_{t_0}^{\epsilon B_{t(s-1)};t(s-1)}[I]=\Gamma_{s-1}^\epsilon C_{t_0}^{\epsilon B_{t\p(s-1)};t\p(s-1)}.$$
  Because $\tilde{\tau}_{s-1}^\epsilon(e_i)\in\N^n$ for $i\in I$, the polytope $\mu_{j_{s-1}}\cdots\mu_{j_1}(N_{[1,n]\setminus I})|_{\A(B_{[1,n]\setminus I})}$ lies in $\R_{\geqslant0}^{|I|}$ and there is an edge parallel to $e_{i_s}$ in the face of $\mu_{i_{s-1}}\cdots\mu_{i_1}(N)$ determined by $\{\mu_{i_{s-1}}\cdots\mu_{i_1}(l^\epsilon_i)|i\in I\}$, so there should be an index $j\in I$ such that $\tilde{\tau}_{s-1}^\epsilon(e_j)=e_{i_s}$, or equivalently, the $j$-th column of $\Gamma_{s-1}^\epsilon$ equals $e_{i_s}$. By Theorem \ref{mutation of faces}, in this case $\mu^\epsilon_{j_s}=\mu_j$.

  Since
  \begin{equation*}
    \begin{array}{ll}
      (G_{t_0}^{-\epsilon B_{t\p(s-1)}^\top;t\p(s-1)})^\top C_{t_0}^{\epsilon B_{t\p(s-1)};t\p(s-1)} & =I_{|I|\times|I|} \\
       & =(G_{t_0}^{-\epsilon B^\top_{t(s-1)};t(s-1)}[I])^\top C_{t_0}^{\epsilon B_{t(s-1)};t(s-1)}[I] \\
       & =(G_{t_0}^{-\epsilon B^\top_{t(s-1)};t(s-1)}[I])^\top\Gamma_{s-1}^\epsilon C_{t_0}^{\epsilon B_{t\p(s-1)};t\p(s-1)}
    \end{array}
  \end{equation*}
  by Theorem \ref{relation between G-matrix and C-matrix} (iii) for $t(s-1)$ and for cluster algebras whose rank is less than $n$, and $C_{t_0}^{\epsilon B_{t\p(s-1)};t\p(s-1)}$ is invertible, so
  \[(G_{t_0}^{-\epsilon B_{t\p(s-1)}^\top;t\p(s-1)})^\top=(G_{t_0}^{-\epsilon B^\top_{t(s-1)};t(s-1)}[I])^\top\Gamma_{s-1}^\epsilon.\]
  Therefore, $(G_{t_0}^{-\epsilon B_{t\p(s-1)}^\top;t\p(s-1)})^\top[j]=(G_{t_0}^{-\epsilon B^\top_{t(s-1)};t(s-1)}[I])^\top e_{i_s}=\pi_{[1,n]\setminus I}((G_{t_0}^{-\epsilon B^\top_{t(s-1)};t(s-1)})^\top[i_s])$.

  (d)\;By inductive assumption, we already get $\{\varphi_s\}_{s\in[0,r-1]}$ satisfying conditions in Theorem \ref{c-vectors in N} (iv). Define
  \[\varphi_{r}:\quad\mu(N)_{\A(B)}\quad\overset{\cong}\longrightarrow\quad \mu(N)_{\A(-B^\top)}\]
  such that for any face $S$ of $\mu(N)|_{\A(B)}$ correlating to $S\p$ of $\mu\p(N)|_{\A(B)}$ under $\mu_{i_r}$ in $\A(B)$, $\varphi_{r}(S)$ is the face of $\mu(N)|_{\A(-B^\top)}$ correlating to $\varphi_{r-1}(S\p)$ under $\mu_{i_r}$ in $\A(-B^\top)$.

  If there are more than one choices of $S\p$, then according to Theorem \ref{mutation of faces}, $S$ is not incident to edges parallel to $e_{i_r}$ and there should be three faces correlating to $S$ under $\mu_{i_r}$: a $dim(S)+1$-dimensional face $S\p_1$ of $\mu_{i_{r-1}}\cdots\mu_{i_1}(N)|_{\A(B)}$ with an edge parallel to $e_{i_r}$, the unique $dim(S)$-dimensional top face $S\p_2$ of $S\p_1$ with respect to $i_r$ and the unique $dim(S)$-dimensional bottom face $S\p_3$ of $S\p_1$ with respect to $i_r$. Then due to inductive assumption, Theorem \ref{c-vectors in N} (iv) for $\varphi_{r-1}$ holds, so $\varphi_{r-1}(S\p_1)$ is a $dim(S)+1$-dimensional face of $\mu_{i_{r-1}}\cdots\mu_{i_1}(N)|_{\A(-B^\top)}$ with an edge parallel to $e_{i_r}$, $\varphi_{r-1}(S\p_2)$ is the unique $dim(S)$-dimensional top face of $\varphi_{r-1}(S\p_1)$ with respect to $i_r$ and $\varphi_{r-1}(S\p_3)$ is the unique $dim(S)$-dimensional bottom face of $\varphi_{r-1}(S\p_1)$ with respect to $i_r$. Therefore, $\varphi_{r-1}(S\p_1)$, $\varphi_{r-1}(S\p_2)$ and $\varphi_{r-1}(S\p_3)$ correlate to the same face of $\mu(N)$ under $\mu_{i_r}$, which should be $\varphi_r(S)$.

  Conversely, if $\varphi_{r-1}(S\p)$ correlates to more than one faces of $\mu(N)|_{\A(-B^\top)}$ under $\mu_{i_r}$, then according to Theorem \ref{mutation of faces}, $\varphi_{r-1}(S\p)$ is not incident to edges parallel to $e_{i_r}$ and there should be three faces correlating to $\varphi_{r-1}(S\p)$ under $\mu_{i_r}$: a $dim(S\p)+1$-dimensional face $S^{\prime\prime}_1$ of $\mu_{i_{r}}\cdots\mu_{i_1}(N)|_{\A(-B^\top)}$ with an edge parallel to $e_{i_r}$, the unique $dim(S\p)$-dimensional top face $S^{\prime\prime}_2$ of $S^{\prime\prime}_1$ with respect to $i_r$ and the unique $dim(S\p)$-dimensional bottom face $S^{\prime\prime}_3$ of $S^{\prime\prime}_1$ with respect to $i_r$. Due to inductive assumption, Theorem \ref{c-vectors in N} (iv) for $\varphi_{r-1}$ holds, so $S\p$ is also not incident to edges parallel to $e_{i_r}$. Because by induction assumption, $deg_{x_j}(p)=0$ for any point $p\in S\p$ if and only if $deg_{x_j}(p\p)=0$ for any point $p\p\in \varphi_{r-1}(S\p)$, so there should be three faces correlating to $S\p$ under $\mu_{i_r}$: a $dim(S\p)+1$-dimensional face $S_1$ of $\mu_{i_{r}}\cdots\mu_{i_1}(N)|_{\A(B)}$ with an edge parallel to $e_{i_r}$, the unique $dim(S\p)$-dimensional top face $S_2$ of $S_1$ with respect to $i_r$ and the unique $dim(S\p)$-dimensional bottom face $S_3$ of $S_1$ with respect to $i_r$. In this case, we set $\varphi_r(S)=S^{\prime\prime}_i$ if $S=S_i$ for $i=1,2,3$.

  Therefore, $\varphi_r$ is a well-defined bijection with $dim(S)=dim(\varphi_{r}(S))$ for any face $S$ of $\mu(N)|_{\A(B)}$, which is the first part of Theorem \ref{c-vectors in N} (iv) (1).

  By definition of $\varphi_r$ and inductive assumption, $\{\varphi_s\}_{s\in[0,r]}$ satisfies Theorem \ref{c-vectors in N} (iv) (3).

  Assume there is an edge $l$ parallel to $e_{i_{s+1}}$ in $\mu_{i_s}\cdots\mu_{i_1}(N)|_{\A(B)}$ for some $s\in[1,r-1]$, then because of Theorem \ref{c-vectors in N} (iv) (2) for $\varphi_{s}$, this is equivalent to $\varphi_{s}(l)$ paralleling to $e_{i_{s+1}}$ in $\mu_{i_s}\cdots\mu_{i_1}(N)|_{\A(-B^\top)}$. So following from Theorem \ref{mutation of faces}, for any $s\leqslant k\in[0,r]$, we can see that $\mu_{i_k}\cdots\mu_{i_{s+1}}$ induces the same mutation sequence for a face $S$ of $\mu_{i_s}\cdots\mu_{i_1}(N)|_{\A(B)}$ as that for $\varphi_{s}(S)$, which shows the second part of Theorem \ref{c-vectors in N} (iv) (2) for $\{\varphi_s\}_{s\in[0,r]}$.

  For an edge $l$ parallel to $e_{s}$ in $\mu(N)|_{\A(B)}$ for some $s\in[1,n]$, if it correlates to an edge $l\p$ of $\mu\p(N)|_{\A(B)}$ parallel to $e_s$ under $\mu_{i_r}$, then similar to the above discussion about the well-definedness of $\varphi_r$, we can see that $\varphi_{r-1}(l\p)$ parallel to $e_s$ and $\varphi_r(l)$ is the edge in $\mu(N)|_{\A(-B^\top)}$ which both parallels to $e_s$ and correlates to $\varphi_{r-1}(l\p)$ under $\mu_{i_r}$.

  Otherwise choose the $\{i_r,s\}$-section $U$ of $\mu\p(N)|_{\A(B)}$ at $l\p$. Because $l\p$ does not parallel to $e_s$ but $l$ does, $U$ must be a face of $\mu\p(N)|_{\A(B)}$. Since $N$ is an $n$-cube, so due to the mutation of polytopes introduced in Section 3, there should be a face $V$ in $\mu_{i_s}\cdots\mu_{i_1}(N)|_{\A(B)}$ correlating to $U$ under $\mu_{i_{r}}\cdots\mu_{i_{s+1}}$ with $dim(V)=0$ when $s>0$ while $dim(V)\leqslant2$ when $s=0$. This can be done by applying mutations in the order of $\mu^{-1}$ to $\mu(N)|_{\A(B)}$ until get a face satisfying the above conditions. Then due to Theorem \ref{c-vectors in N} (iv) (3) and the first part of (1), there is a face $\varphi_s(V)$ in $\mu_{i_s}\cdots\mu_{i_1}(N)|_{\A(-B^\top)}$ correlating to $\varphi_{r-1}(U)$ under $\mu_{i_{r-1}}\cdots\mu_{i_{s+1}}$ with $dim(\varphi_{s}(V))=0$ when $s>0$ while $dim(\varphi_s(V))\leqslant2$ when $s=0$. Following from the second part of Theorem \ref{c-vectors in N} (iv) (2), $\mu_{i_{r-1}}\cdots\mu_{i_{s+1}}$ induces the same mutation sequence for $V$ and $\varphi_s(V)$ respectively. So according to Proposition \ref{rank 2 finite}, $\varphi_{r-1}(l)$ should also be an edge parallel to $e_s$ of the face correlating to $\varphi_{r-1}(U)$ under $\mu_{i_r}$. Therefore, any face $S$ of $\mu(N)|_{\A(B)}$ contains an edge parallel to $e_{s}$ if and only if so does $\varphi_{r}(S)$, which induces the first part of Theorem \ref{c-vectors in N} (iv) (2) for $\{\varphi_s\}_{s\in[0,r]}$.

  Finally we show the second part of Theorem \ref{c-vectors in N} (iv) (1) holds for $\{\varphi_s\}_{s\in[0,r]}$.

  Let $p$ be a vertex in $\mu(N)|_{\A(B)}$ such that $deg_{x_j}(p)=0$ for some $j\in[1,n]$. If $deg_{x_j}(p\p)=0$ for some vertex $p\p$ of $\mu\p(N)|_{\A(B)}$ correlating to $p$ under $\mu_{i_r}$, then by inductive assumption, $deg_{x_j}(\varphi_{r-1}(p\p))=0$ and $\varphi_{r-1}(p\p)$ correlates to $\varphi_r(p)$ under $\mu_{i_r}$. According to the construction of $\varphi_{r-1}$ and $\varphi_r$, $p\p$ is a top (or bottom respectively) face of $\mu\p(N)|_{\A(B)}$ if and only if $\varphi_{r-1}(p\p)$ is a top (or bottom respectively) face of $\mu\p(N)|_{\A(-B^\top)}$ and $p$ is a top (or bottom respectively) face of $\mu(N)|_{\A(B)}$ if and only if $\varphi_{r}(p)$ is a top (or bottom respectively) face of $\mu(N)|_{\A(-B^\top)}$. Therefore following the mutation of polytopes introduced in Section 3 and , $p\p$ correlates $p$ under $\mu_{i_r}$ with $deg_{x_j}(p\p)=deg_{x_j}(p)=0$ and $deg_{x_j}(\varphi_{r-1}(p\p))=0$ induces $deg_{x_j}(\varphi_r(p))=0$.

  Otherwise, either $deg_{x_j}(p\p)>0$ and $deg_{x_{i_r}}(p\p)<0$ or $deg_{x_j}(p\p)<0$ and $deg_{x_{i_r}}(p\p)>0$, we may take the $\{j,i_r\}$-section $U$ of $\mu(N)|_{\A(B)}$ at $p$. Similar to the discussion in the proof of the second part of Theorem \ref{c-vectors in N} (iv) (1), in both cases we can look at the mutations of $U$ and simultaneous mutations of $\varphi(U)$, by inductive assumption and Proposition \ref{rank 2 finite}, we should also have $deg_{x_j}(\varphi_r(p))=0$. In fact, there should be an edge $l$ incident to $p^{\prime\prime}$  parallel to $e_{j}$ or $e_{i_r}$ in $U^{\prime\prime}$, where $p^{\prime\prime}=p$, $U^{\prime\prime}=U$ or they correlate to $p$ and $U$ respectively under a mutation in direction $j$ or $i_r$. Therefore, $deg_{x_j}(p)=0$ if and only if $deg_{x_j}(\varphi_r(p))=0$ for any vertex $p$. Since $x_i$-degree linearly depends on the coordinates for any $i\in[1,n]$ and each face is a convex hull of its vertices, we get the second part of Theorem \ref{c-vectors in N} (iv) (1) for $\{\varphi_s\}_{s\in[0,r]}$.

  (e)\;Since $B$ is skew-symmetrizable when $n=2$ and Proposition \ref{sign-synchronicity of c-vectors for B and -B^T} holds for skew-symmetrizable matrices, we may assume $n\geqslant3$ here.

  For any $j\in[1,n]$, choose $I$ to be $[1,n]\setminus\{j\}$ and $\epsilon=+$. Then $\mu$ in $\A$ induces a mutation sequence $\tilde{\mu}$ from $\Sigma_{t_0}|_{\A(B_j)}$ to $\Sigma_{t'(r)}|_{\A(B_j)}$ by Theorem \ref{mutation of faces}. According to Theorem \ref{c-vectors in N} (iii) for $\mu$ and Proposition \ref{sign-synchronicity of c-vectors for B and -B^T} for vertices on the path from $t_0$ to $t$ induced by $\mu$, the mutation sequence in $\A(-B_j^\top)$ induced by $\mu$ is also $\tilde{\mu}$. Then due to Proposition \ref{sign-synchronicity of c-vectors for B and -B^T} for cluster algebras with rank $n-1$, $G_{t'(r)}^{B_j;t_0}$ is sign-synchronic to $G_{t'(r)}^{-B_j^\top;t_0}$.

  As said in Remark \ref{comparison with Muller's result}, by Theorem \ref{c-vectors in N} (iii) for $\mu$, Proposition \ref{compatibility degree and multiplication} and the fact that $N_{\pi_{j}(h)}|_{\A(B_{j})}$ equals the intersection of the hyperplane $z_j=0$ and $N_h|_{\A}$ for any $h\in\Z^n$, the cone generated by $G_{t}^{B;t_0}$ is contained in that generated by $\gamma_{j;0}(G_{t'(r)}^{B_j;t_0})$ and $\pm e_j$. This together with the row sign-coherence of $G$-matrices induces $(G_{t}^{B;t_0})_j$ is weakly sign-synchronic to $G_{t'(r)}^{B_j;t_0}$. Similarly, $(G_{t}^{-B^\top;t_0})_j$ is weak sign-synchronic to $G_{t'(r)}^{-B_j^\top;t_0}$.

  Therefore, $(G_{t}^{B;t_0})_j$ is weakly sign-synchronic to $(G_{t}^{-B^\top;t_0})_j$ since $g$-vectors are non-zero. Then due to the arbitrary choice of $j$ and the assumption $n\geqslant3$, we get that $G_{t}^{B;t_0}$ is weakly sign-synchronic to $G_{t}^{-B^\top;t_0}$. Moreover, Theorem \ref{c-vectors in N} (iii) and (iv) induces $g_{ij;t}^{B;t_0}=0$ if and only if $g_{ij;t}^{-B^\top;t_0}=0$ for any $i,j\in[1,n]$. Hence $G_{t}^{B;t_0}$ is sign-synchronic to $G_{t}^{-B^\top;t_0}$.

  Sign-synchronicity of $C_{t}^{B;t_0}$ and $C_{t}^{-B^\top;t_0}$ follows from the above sign-synchronicity and Theorem \ref{relation between G-matrix and C-matrix} (i).
\end{Proof}

By Theorem \ref{results in LP} (ii) and Theorem \ref{c-vectors in N} (ii), we can also explain a cluster as a set of polytope functions whose degree vectors corresponding to normal vectors of some polytopes of $\A(-B^\top)$. This leads to another perspective to consider cluster structures.

Let $\mathcal{N}_{g}(B)$ be the normal fan of $\bigoplus\limits_{\substack{g\in\Z^n\\\rho_g\text{ is a cluster monomial}}}N_g|_{\A(-B^\top)}$. For $\mathcal{N}_g(B)$ and $\mathcal{N}(B)$ which will be defined later, we omit the matrix $B$ if there is no risk of confusion.

(\ref{equation: mutation of l^epsilon_j v1}) shows how $\overrightarrow{\mu(l^\epsilon_j)}$ is changed under mutations. Via similar discussion, it can be generalized to a primitive vector of any edge of $\mu(N)$ for $\A(B)$. For a vertex $p$ of $\mu(N)$ and the primitive vector $a=(a_1,\cdots,a_n)^\top$ of an edge $\overline{pq}$ of $\mu(N)$ incident to $p$ and pointing to $p$, under a mutation in direction $k$, $a$ correlates to 0 if $a=\pm e_k$ and $l(\overline{pq})=-deg_{x_k}(p)$, otherwise it correlates to $a\p=(a\p_1,\cdots,a\p_n)^\top$ pointing to $p\p$ of an edge $\overline{p\p q\p}$ of $\mu_k\mu(N)$ correlating to $\overline{pq}$ under $\mu_k$, where
\begin{equation}\label{equation: vector of edges under mutation}
  a\p_j=\left\{\begin{array}{ll}
                 a_j, & \text{if }j\neq k; \\
                 -a_k, & \text{if $j=k$ and $a=\pm e_k$;} \\\\
                 -a_{k}+\sum\limits_{s=1}^n[b_{ks}^{t}]_+a_{s}, & \begin{split}
                                                                     &\text{if $j=k$, $\overline{pq}$ is a bottom}\\
                                                                     &\text{edge with respect to $k$ and $\overline{p\p q\p}$}\\
                                                                     &\text{is a top edge with respect to $k$;}
                                                                  \end{split}\\\\
                 -a_{k}+\sum\limits_{s=1}^n[-b_{ks}^{t}]_+a_{s}, & \begin{split}
                                                                      &\text{if $j=k$, $\overline{pq}$ is a top}\\
                                                                      &\text{edge with respect to $k$ and $\overline{p\p q\p}$}\\
                                                                      &\text{is a bottom edge with respect to $k$.}
                                                                   \end{split}
               \end{array}\right.
\end{equation}
for any $j\in[1,n]$ and $t$ represents the vertex connected to $t_0$ via the path induced by $\mu$.

According to (\ref{equation: vector of edges under mutation}), we can also describe how facets or dually the outer normal vectors are changed under a single mutation. For an arbitrary facet $S$ of $\mu(N)$ with primitive outer normal vector $v=(v_1,\cdots,v_n)^\top$ and $k\in[1,n]$, if $v_k=0$, then $S$ contains a segment parallel to $e_k$, so it either correlates to a facet $S\p$ of $\mu_k\mu(N)$ with primitive outer normal vector $v\p=v$ according to the first and the second equations of (\ref{equation: vector of edges under mutation}), or it correlates to a face $S\p$ of $\mu_k\mu(N)$ of codimension 2 which lies in the hyperplane $z_k=0$ and which is the intersection of a top facet $S\p_1$ of $\mu_k\mu(N)$ with respect to $k$ with primitive outer normal vector $v\p_1$ and a bottom facet $S\p_2$ of $\mu_k\mu(N)$ with respect to $k$ with primitive outer normal vector $v\p_2$, thus $v$ is the primitive vector in the intersection of the cone $cone(\{v\p_1,v\p_2\})$ and the hyperplane $z_k=0$; if $v_k>0$, then $S$ is a top facet with respective to $k$, so $S$ correlates to a bottom facet $S\p$ of $\mu_k\mu(N)$ with respect to $k$ with primitive outer normal vector $v\p=(v\p_1,\cdots,v\p_n)$, where $S\p$ can be calculated from $S$ via the first and the fourth equations of (\ref{equation: vector of edges under mutation}) and
\begin{equation*}
  v\p_j=\left\{\begin{array}{ll}
                 -v_k, & \text{if }j=k; \\
                 v_j+[-b_{kj}^t]_+v_k, & \text{otherwise.}
               \end{array}\right.
\end{equation*}
as
\begin{equation*}
  \begin{array}{ll}
    v\p\cdot a\p & =\sum\limits_{j=1}^{n}v\p_j a\p_j \\
                 & =-v_k(-a\p_{k}+\sum\limits_{s=1}^n[-b_{ks}^{t}]_+a_{s})+\sum\limits_{j\neq k}(v_j+[-b_{kj}^t]_+v_k)a_j \\
                 & =\sum\limits_{j=1}^n v_j a_j \\
                 & =0;
  \end{array}
\end{equation*}
if $v_k<0$, then dually $S$ is a bottom facet with respective to $k$, so $S$ correlates to a top facet $S\p$ of $\mu_k\mu(N)$ with respective to $k$ with primitive outer normal vector $v\p=(v\p_1,\cdots,v\p_n)$, where $S\p$ can be calculated from $S$ via the first and the third equations of (\ref{equation: vector of edges under mutation}) and
\begin{equation*}
  v\p_j=\left\{\begin{array}{ll}
                 -v_k, & \text{if }j=k; \\
                 v_j+[b_{kj}^t]_+v_k, & \text{otherwise.}
               \end{array}\right.
\end{equation*}
Therefore, when $v_k\neq0$, a facet $S$ of $\mu(N)$ with outer normal vector $v$ correlates to a facet $S\p$ of $\mu_k\mu(N)$ with outer normal vector $v\p$ under $\mu_k$ such that
\begin{equation}\label{equation: normal vector under mutation}
  v\p_j=\left\{\begin{array}{ll}
                 -v_k, & \text{if }j=k; \\
                 v_j+sign(v_k)[-b_{kj}^t v_k]_+, & \text{otherwise.}
               \end{array}\right.
\end{equation}
And vice versa since mutation is an involution.

In summary, $\mathcal{N}_g$ can be calculated as follows. For each $t\in\T_n$, $\Lambda=(\lambda_1,\cdots,\lambda_n)^{\top}\in\{\pm1\}^n$ and $n\times n$ matrix $I^{\Lambda}_n=(\lambda_1 e_1,\cdots,\lambda_n e_n)$ whose column vectors are outer normal vector of $N$ at vertex $\frac{1}{2}(\sum\limits_{i=1}^n e_i+\Lambda)$, by the above discussion, mutations can also be applied to $I^{\Lambda}_n$. So we obtain a set of matrices $\mathcal{G}_{t_0}^{\Lambda;t}=\{G_{s;t_0}^{\Lambda;t}\}$ under the mutation sequence $\mu$ induced by path
\[t=t_r\quad\frac{\;\; i_r\;\;}{\empty}\quad t_{r-1}\quad\frac{i_{r-1}}{\empty}\quad\cdots\quad\frac{\;\;i_{1}\;\;}{\empty}\quad t_0\]
from $t$ to $t_0$ in $\T_n$ such that $\mu(\Sigma_t)=\Sigma_{t_0}$ iteratively:

(i)\;$\mathcal{G}_{t}^{\Lambda;t}=\{I^{\Lambda}_n\}$.

(ii)\;Assume $\mathcal{G}_{t_{j}}^{\Lambda;t}$ has been determined. For each $G_{s;t_{j}}^{\Lambda;t}\in\mathcal{G}_{t_{j}}^{\Lambda;t}$, denote by $G_{\epsilon,s;t_{j}}^{\Lambda;t}$ the matrix obtained from $G_{s;t_{j}}^{\Lambda;t}$ by deleting columns not weakly-sign-synchronic to $\epsilon e_{i_j}$ for $\epsilon\in\{+,-\}$. For each non-zero $G_{\epsilon,s;t_{j}}^{\Lambda;t}$, we add an $(|I_{\epsilon,s;t_j}^{\Lambda;t}\sqcup I_{\pm,s;t_j}^{\Lambda;t}|)\times n$ matrix $\mu_{i_r}^{\epsilon}(G_{s;t_{j}}^{\Lambda;t})$ in $\mathcal{G}_{t_{j-1}}^{\Lambda;t}$, where $I_{\epsilon,s;t_j}^{\Lambda;t}$ represents the label set of $G_{\epsilon,s;t_{j}}^{\Lambda;t}$, $I_{\pm,s;t_{j}}^{\Lambda;t}=\{(l_1,l_2)\in I_{+,s;t_{j}}^{\Lambda;t}\times I_{-,s;t_{j}}^{\Lambda;t}|cone(G_{+,s;t_{j}}^{\Lambda;t}[l_1],G_{-,s;t_{j}}^{\Lambda;t}[l_2])\text{ is a face of }cone(G_{s;t_{j}}^{\Lambda;t})\}$ and $\mu_{i_r}^{\epsilon}(G_{s;t_{j}}^{\Lambda;t})[k]$ is obtained from $G_{\epsilon,s;t_{j}}^{\Lambda;t}[k]$ via (\ref{equation: normal vector under mutation}) with $B_t$ replaced by $-B^\top_{t_{j}}$ when $k\in I_{\epsilon,s;t_{j}}^{\Lambda;t}$, while $\mu_{i_r}^{\epsilon}(G_{s;t_{j}}^{\Lambda;t})[k]$ is the primitive vector lying in the intersection of $z_{i_{j}}=0$ and $cone(G_{+,s;t_{j}}^{\Lambda;t}[l_1],G_{-,s;t_{j}}^{\Lambda;t}[l_2])$ when $k=(l_1,l_2)\in I_{\pm,s;t_{j}}^{\Lambda;t}$.

For each $t\in\T_n$, $\{\mathcal{G}_{t_0}^{\Lambda;t}|\Lambda\in\{\pm1\}^n\}$ induces a complete fan $\mathcal{N}_g^t$ consisting of $\{cone(G_{s;t_0}^{\Lambda;t})|G_{s;t_0}^{\Lambda;t}\in\mathcal{G}_{t_0}^{\Lambda;t}\text{ and }\Lambda\in\{\pm1\}^n\}$ and their faces, which is the normal fan of the Newton polytope corresponding to the Laurent expression of $\prod\limits_{\substack{l\in[1,n]\\t\p\in\T_n}}x_{l;t\p}^{\alpha_{l;t\p}}$ in $X$ for some $\alpha_{l;t\p}\in\N$ and $\alpha_{l;t}\in\Z_{>0}$ according to the definition of $\mathcal{N}_g$ and the above discussion about how a outer normal vector of $\mu(N)$ is changed under a single mutation (the point is that we could properly choose some $\alpha_{l;t\p}$ to be nonzero when $t\p\neq t$ to avoid the case where a facet degenerates to a face with codimension 2 under a single mutation for the convenience of calculation). In this way we may rebuild $\mathcal{N}_g$.

\begin{Proposition}
  $\mathcal{N}_g$ equals the common refinement of $\{\mathcal{N}_g^t|t\in\T_n\}$.
\end{Proposition}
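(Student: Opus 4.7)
The plan is to reduce both sides to the normal fan of the same positive Minkowski combination of Newton polytopes of cluster variables, and then invoke the fact recalled at the end of Section~2 that the normal fan of a Minkowski sum is the common refinement of the normal fans of its summands. In particular, positive scaling of a summand does not alter the associated normal fan, so only the list of summands modulo positive scaling matters.

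First, I would rewrite $\mathcal{N}_g$ in a cluster-variable-only form. By definition $\mathcal{N}_g$ is the normal fan of $\bigoplus_{g}N_g|_{\A(-B^\top)}$ where $g$ runs over $g$-vectors of cluster monomials. A cluster monomial $X_t^\alpha=\prod_l x_{l;t}^{\alpha_l}$ is a product of pairwise compatible cluster variables, and by iterated application of Proposition~\ref{compatibility degree and multiplication} its polytope function factors as $\rho_{G_t\alpha}=\prod_l \rho_{g_{l;t}}^{\alpha_l}$. Under the bijection between Laurent polynomials and Newton polytopes recalled in Section~2, the product on the right corresponds to a Minkowski sum, so $N_{G_t\alpha}|_{\A(-B^\top)}=\bigoplus_l \alpha_l\, N_{g_{l;t}}|_{\A(-B^\top)}$. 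Collecting over all $(t,\alpha)$ with $\alpha\in\N^n$, $\mathcal{N}_g$ coincides with the common refinement of the family $\{\mathcal{N}(N_{g_{l;t}}|_{\A(-B^\top)})\mid l\in[1,n],\,t\in\T_n\}$.

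Second, by the explicit description of $\mathcal{N}_g^t$ given just before the statement, each $\mathcal{N}_g^t$ is the normal fan of the Newton polytope $P_t$ of $\prod_{l,t'}x_{l;t'}^{\alpha_{l;t'}}$ with $\alpha_{l;t'}\in\N$ and $\alpha_{l;t}\in\Z_{>0}$. The same product-to-Minkowski-sum argument gives $P_t=\bigoplus_{l,t'} \alpha_{l;t'}\, N_{g_{l;t'}}|_{\A(-B^\top)}$, so the common refinement $\bigwedge_{t\in\T_n} \mathcal{N}_g^t$ equals the normal fan of $\bigoplus_{t\in\T_n} P_t$. Because the condition $\alpha_{l;t}>0$ is imposed for the index $t$ that labels $P_t$, as $t$ ranges over $\T_n$ every cluster variable $x_{l;t}$ contributes its Newton polytope $N_{g_{l;t}}|_{\A(-B^\top)}$ to $\bigoplus_t P_t$ with a strictly positive coefficient.

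Both $\mathcal{N}_g$ and $\bigwedge_{t\in\T_n}\mathcal{N}_g^t$ are therefore the common refinement of the same family $\{\mathcal{N}(N_{g_{l;t}}|_{\A(-B^\top)})\mid l\in[1,n],\,t\in\T_n\}$, and the equality follows. I expect the only delicate point is justifying the description of $\mathcal{N}_g^t$ as the normal fan of an honest cluster-variable product rather than an abstract fan obtained by mutation of coordinate normals: the parenthetical remark in the paragraph preceding the statement indicates that the extra $\alpha_{l;t'}$ can be chosen positive precisely to prevent facets from degenerating to codimension-two faces under a single mutation, so that the inductive mutation procedure for $\mathcal{G}^{\Lambda;t}_{t_0}$ and the Minkowski-sum interpretation stay compatible. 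Once this point is checked, the matching Minkowski decompositions above immediately give the equality of fans.
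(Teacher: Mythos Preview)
Your proof is correct and follows essentially the same approach as the paper: both identify each side with the normal fan of a Minkowski sum of Newton polytopes of cluster variables and use that common refinement of normal fans equals the normal fan of the Minkowski sum. The paper's argument is simply a terser version of yours, making the reduction from cluster monomials to cluster variables implicit in the final identification rather than spelling it out via Proposition~\ref{compatibility degree and multiplication} as you do.
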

\begin{proof}
  Because $\mathcal{N}_g^t$ is the normal fan of the Newton polytope corresponding to the Laurent expression of $\prod\limits_{\substack{l\in[1,n]\\t\p\in\T_n}}x_{l;t\p}^{\alpha_{l;t\p}}$ in $X$ for some $\alpha_{l;t\p}\in\N$ and $\alpha_{l;t}\in\Z_{>0}$, the common refinement of $\{\mathcal{N}_g^t|t\in\T_n\}$ equals the normal fan of the Minkovski sum of all such Newton polytopes, which therefore equals the normal fan of the Newton polytope corresponding to the Laurent expression of $\prod\limits_{\substack{l\in[1,n]\\t\p\in\T_n}}x_{l;t\p}^{\alpha_{l;t\p}}$ in $X$ for some $\alpha_{l;t\p}\in\Z_{>0}$, that is, $\mathcal{N}_g$.
\end{proof}

Although the mutation formula (\ref{equation: normal vector under mutation}) of normal vectors coincides with that of $g$-vectors, those labeled by indices in $I_{\pm,s;t\p}^{\Lambda;t}$ are not necessarily $g$-vectors of $\A(B)$ or the minus of $g$-vectors of $\A(-B)$. It is the case when either $n=2$ where the vectors labeled by indices in $I_{\pm,s;t\p}^{\Lambda;t}$ belongs to $\{\pm e_1,\pm e_2\}$ or $\A(B)$ is of finite type due to Proposition \ref{N contains g-fan} and the fact $g$-fan is a complete fan in this case. This is also presented in \cite{PPPP}.

\begin{Example}
  In general, there may be a vector in $\mathcal{N}_g(B)$ which is neither a $g$-vector of $\A(B)$ nor the minus of a $g$-vector of $\A(-B)$ even when $\A(B)$ is acyclic. Let
  \[B=-B^\top=\begin{pmatrix}
                0 & 2 & -4 \\
                -2 & 0 & 2 \\
                4 & -2 & 0
              \end{pmatrix},\]
  $\Lambda=(-1,-1,1)$ and $t$ be the vertex connected to $t_0$ by path $t=t_3\quad\frac{\;\;1\;\;}{\empty}\quad t_2\quad\frac{\;\;3\;\;}{\empty}\quad t_1\quad\frac{\;\;2\;\;}{\empty}\quad t_0$. Then starting from
  \[\mathcal{G}_{t}^{\Lambda;t}=\left\{\begin{pmatrix}
                                         -1 & 0 & 0 \\
                                         0 & -1 & 0 \\
                                         0 & 0 & 1
                                       \end{pmatrix}\right\},\]
  we get
  \[\mathcal{G}_{t_2}^{\Lambda;t}=\left\{\begin{pmatrix}
                                           1 & 0 & 0 \\
                                           -2 & -1 & 0 \\
                                           0 & 0 & 1
                                         \end{pmatrix}\right\},\]
  \[\mathcal{G}_{t_1}^{\Lambda;t}=\left\{\begin{pmatrix}
                                           1 & 0 & 0 \\
                                           -2 & -1 & 2 \\
                                           0 & 0 & -1
                                         \end{pmatrix}\right\},\]
  and
  \[\mathcal{G}_{t_0}^{\Lambda;t}=\left\{\begin{pmatrix}
                                           1 & 0 & 0 \\
                                           0 & 0 & -2 \\
                                           -1 & -1 & 3
                                         \end{pmatrix},\begin{pmatrix}
                                                         -3 & -2 & 1 & 0 \\
                                                         2 & 1 & 0 & 0 \\
                                                         0 & 0 & -1 & -1
                                                       \end{pmatrix}\right\}.\]
  So $(1,0,-1)$ is a vector in $\mathcal{N}_g$, which is the outer normal vector of $x_{2;t_4}=\rho_{(0,-3,4)^\top}$, where $t_4$ is connected to $t$ by an edge labeled 2. However, $(1,0,-1)^\top$ is neither a $g$-vector of $\A(B)$ nor the minus of a $g$-vector of $\A(-B)$. In fact, $\rho_{(1,0,-1)}$ is not self-compatible, so it can not be a cluster variable.
\end{Example}

More generally, as explained in Remark \ref{comparison with Muller's result}, according to Theorem \ref{c-vectors in N}, it is natural to thereby construct a complete fan for $B$ by taking the normal fan of $\bigoplus\limits_{h\in\Z^n}N_h|_{\A(-B^\top)}$, or equivalently, by taking the common refinement of $\{\mathcal{N}(N_h|_{\A(-B^\top)})|h\in\Z^n\}$. More precisely, the normal fans of
$$\bigoplus\limits_{\substack{h\in\Z^n\\h\geqslant g\\dim(N_h|_{\A(-B^\top)})=n}}N_h|_{\A(-B^\top)}$$
for any $g\in\Z^n$ form an inverse system and denote by $\mathcal{N}(B)$ the inverse limit of this inverse system. By definition, $\mathcal{N}$ is a complete simplical pointed fan with dimension $n$.
\begin{Lemma}\label{fan N contains non-negative and non-positive cones}
  $\mathcal{N}$ contains the non-negative cone and the non-positive cone.
\end{Lemma}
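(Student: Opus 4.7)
The plan is to exhibit, for every sufficiently small $g\in\Z^n$, two vertices of
\[P_g:=\bigoplus_{\substack{h\geqslant g\\ \dim N_h|_{\A(-B^\top)}=n}} N_h|_{\A(-B^\top)}\]
whose normal cones are exactly the non-positive and non-negative orthants of $\R^n$. Because $\mathcal{N}(B)$ is defined as the inverse limit of the normal fans $\mathcal{N}(P_g)$, a cone that appears unchanged in $\mathcal{N}(P_g)$ for every sufficiently negative $g$ descends to $\mathcal{N}$.

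The easy half would use the fact recalled from \cite{LP} that every $F$-polynomial $F_h|_{\A(-B^\top)}$ has constant term $1$ and a unique maximal monomial of coefficient $1$: this forces $N_h|_{\A(-B^\top)}\subseteq\R^n_{\geqslant 0}$, realises the origin as a weight-$1$ vertex, and gives a unique maximum vertex $p_{\max}(h)$. Because the origin has a unique decomposition as a sum of points of $\R^n_{\geqslant 0}$, it is a vertex of $P_g$; dually so is $M_g:=\sum p_{\max}(h)$. The containments $\R^n_{\leqslant 0}\subseteq C_{\{0\}}$ and $\R^n_{\geqslant 0}\subseteq C_{\{M_g\}}$ follow immediately from $0\leqslant p\leqslant M_g$ for every $p\in P_g$, applied coordinate-wise against a non-positive or non-negative test vector $w$.

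The substantive part is the reverse inclusions, i.e.\ that the tangent cone of $P_g$ at the origin fills all of $\R^n_{\geqslant 0}$ (and dually at $M_g$) once $g$ is taken negatively enough. Because tangent cones at uniquely represented vertices of Minkowski sums add, this reduces to producing, for each direction $e_i$, an admissible summand $N_{h^{(i)}}|_{\A(-B^\top)}$ of dimension $n$ that has an edge from the origin parallel to $e_i$ (together with a symmetric statement at its maximum). My candidate for $h^{(i)}$ would be built by starting from the one-dimensional polytope $[0,e_i]$ associated to the mutated initial variable $\rho_{-e_i+[(-B^\top)_i]_+}|_{\A(-B^\top)}$ and thickening it in the remaining $n-1$ coordinate directions by multiplying by cluster variables drawn from a single cluster containing that mutated variable; Proposition \ref{compatibility degree and multiplication} together with Lemma \ref{interior of a cone} guarantees that such a compatible product is again a single polytope function $\rho_{h^{(i)}}$ whose Newton polytope is the Minkowski sum of the factor polytopes and therefore still has $[0,e_i]$ as a face at the origin.

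The hard part will be to check that the thickening really can be carried out in the remaining $n-1$ directions for every TSSS $B$. Rank $2$ is handled directly from the pentagon of clusters (or its infinite analogue), and for higher rank the natural approach is induction: by Theorem \ref{mutation of faces} the $\{i\}$-section of $N|_{\A(-B^\top)}$ at the origin inherits the cluster structure of $\A((-B^\top)_{\{i\}})$ of rank $n-1$, so the inductive hypothesis produces an $(n-1)$-dimensional polytope in the hyperplane $z_i=0$ with edges in every direction $e_j$, $j\neq i$, and gluing $[0,e_i]$ back in via the embedding $\gamma_{\{i\};0}$ yields the desired full-dimensional $N_{h^{(i)}}|_{\A(-B^\top)}$. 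The parallel construction at $M_g$ proceeds by the same inductive scheme applied after the involution exchanging bottom and top faces of each summand, using the unique maximum in the role of the origin.
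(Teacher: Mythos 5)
Your framing is sound, and your ``easy half'' is in substance the whole of the paper's actual argument: the paper deduces that no primitive vector of $\pm\N^n$ other than $\pm e_i$ can be a facet normal of any $N_h|_{\A(-B^\top)}$ from the unique minimal and maximal points of $N_h$, which is exactly your observation that $0\leqslant p\leqslant M_g$ forces $\R^n_{\leqslant0}\subseteq C_{\{0\}}$ and $\R^n_{\geqslant0}\subseteq C_{\{M_g\}}$. You are also right that the reverse inclusions need a separate argument that the coordinate hyperplanes genuinely appear as walls bounding these cones; the paper disposes of this in one clause (``$e_i$ and $-e_i$ are normal vectors of $N$''), so your insistence on exhibiting full-dimensional summands with coordinate-direction edges at the two extremal vertices is the right thing to aim for, and one good $g_0$ suffices since the fans $\mathcal{N}(P_g)$ only refine as $g$ decreases.

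The gap is in your construction of $h^{(i)}$. The natural cluster containing $\rho_{-e_i+[(-B^\top)_i]_+}$, namely the one adjacent to the initial cluster, has all its remaining variables equal to initial cluster variables, whose Newton polytopes are single points; multiplying by them is a Minkowski sum with $\{0\}$ and leaves the Newton polytope equal to the segment $[0,e_i]$, which is never an admissible ($n$-dimensional) summand of $P_g$. For any other cluster containing that variable you give no reason why the companion variables supply the missing $n-1$ directions, and your fallback rank induction rests on an unproved gluing step: knowing what the $z_i=0$ slice of a hoped-for $N_{h^{(i)}}$ should be does not produce an $h^{(i)}\in\Z^n$ whose polytope simultaneously realizes that slice and carries the edge $[0,e_i]$ at the origin. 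A much shorter completion is available from a fact the paper itself invokes in the proof of Proposition \ref{sign-coherence of G-matrices}: if $h_k<0$ then $co_{e_k}(N_h)>0$, since otherwise $L^{t_k}(\rho_h)$ would fail positivity. Hence any single $h\in\Z^n_{<0}$ gives a summand $N_h|_{\A(-B^\top)}$ containing $\text{conv}(\{0,e_1,\cdots,e_n\})$; it is full-dimensional and its normal cone at the origin is exactly $\R^n_{\leqslant0}$, settling the non-positive orthant in one stroke, and the symmetric statement at the unique maximal point (edges parallel to $e_k$ located at the top vertex, forced by Theorem \ref{results in LP} (iv) and the positivity of the one-step mutation) settles the non-negative one. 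I would replace the thickening construction by this.
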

\begin{proof}
  We have already know $e_i$ and $-e_i$ are normal vector of $N$ for any $i\in[1,n]$, so we only need to show a primitive vector $v$ can not be a normal vector of some polytopes when either $v\in\N^n\setminus\{e_i|i\in[1,n]\}$ or $-v\in\N^n\setminus\{e_i|i\in[1,n]\}$. This is ensured by Theorem \ref{results in LP} (iii) and the fact that there is a unique maximal point and a unique minimal point in $N_h$ for any $h\in\Z^n$ showed in \cite{LP}.
\end{proof}
\begin{Proposition}\label{N contains g-fan}
  $\mathcal{N}$ contains all cones of $g$-fan of $\A(B)$ and all cones of the central symmetry of $g$-fan of $\A(-B^\top)$.
\end{Proposition}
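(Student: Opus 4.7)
By Lemma \ref{fan N contains non-negative and non-positive cones}, the cones $\pm\R_{\geqslant 0}^n$, which are exactly the $g$-cones of both $\A(B)$ and $\A(-B^\top)$ at $t=t_0$ (up to central symmetry), lie in $\mathcal{N}$. I would then induct on the length $r$ of a minimal mutation path $\mu=\mu_{i_r}\cdots\mu_{i_1}$ from $t_0$ to $t$, and for each such $t$ exhibit a polytope (or Minkowski sum of polytopes) in $\A(-B^\top)$ one of whose vertices has outer normal cone equal to the targeted $g$-cone.

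The main engine is Theorem \ref{c-vectors in N} (ii) applied to $\A(-B^\top)$. Taking $B'=-B^\top$ one has $B'_t=-B_t^\top$, hence $-\epsilon (B'_t)^\top=\epsilon B_t$, so the theorem reads $G^{\epsilon B_t;t}_{t_0}=\epsilon\,\overrightarrow{\mu(L^\epsilon)|_{\A(-B^\top)}}^{\bot}$. Consequently the outer normal cone at the vertex $\mu(\sum_i e_i)$ of $\mu(N)|_{\A(-B^\top)}$ is $\text{cone}(G^{B_t;t}_{t_0})$, and at the opposite vertex $\mu(0)$ it is $-\text{cone}(G^{-B_t;t}_{t_0})$. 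By the duality of Theorem \ref{relation between G-matrix and C-matrix} (ii), these equal $\text{cone}((G^{-B;t_0}_t)^{-1})$ and $-\text{cone}((G^{B;t_0}_t)^{-1})$ respectively—cones which in general only contain, rather than equal, the targeted $g$-cones $\text{cone}(G^{B;t_0}_t)$ and $-\text{cone}(G^{-B^\top;t_0}_t)$.

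To sharpen containment to equality, I would form the common refinement with the normal fans of the intermediate polytopes $\mu_{i_s}\cdots\mu_{i_1}(N)|_{\A(-B^\top)}$ for $s<r$, tracking correlating vertices through the face-complex bijections of Theorem \ref{c-vectors in N} (iv). Proposition \ref{sign-synchronicity of c-vectors for B and -B^T} matches sign patterns of the $G$-matrices of $\A(B)$ and $\A(-B^\top)$, which is what allows this $\A(-B^\top)$-side refinement to cut along rays corresponding to $g$-vectors of $\A(B)$; the symmetric argument at $\mu(0)$ handles the central symmetry of the $g$-fan of $\A(-B^\top)$. Since $g$-vectors form a $\Z$-basis by Corollary \ref{c-vectors or g-vectors form a Z-basis}, each $g$-cone is simplicial of full dimension, so it appears as a maximal cone of the refined normal fan and hence as a cone of $\mathcal{N}$.

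The main obstacle I expect is verifying tightness of the refinement: that after incorporating enough intermediate polytopes, the common refinement is neither still coarser than the $g$-fan (missing some $g$-cone as a face) nor introduces spurious subdivision rays. This should reduce to an inductive tracking of the normal-vector mutation rule \eqref{equation: normal vector under mutation} along the path $t_0\to t$, using Theorem \ref{c-vectors in N} (iii) to identify exactly which columns of each successive $G$-matrix contribute new rays and which are already present, so that the rays accumulated through step $r$ are precisely the $n$ columns of $G^{B;t_0}_t$ (respectively $-G^{-B^\top;t_0}_t$).
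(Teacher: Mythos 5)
There is a genuine gap, and it sits exactly where you flag ``the main obstacle.'' Two points. First, your ``main engine'' is aimed at the wrong polytope: the normal cone of $\mu(N)|_{\A(-B^\top)}$ at the vertex correlating to $\sum_i e_i$ is $\text{cone}(G_{t_0}^{B_t;t})=\text{cone}\bigl((G_t^{-B;t_0})^{-1}\bigr)$, which, as you concede, is not the targeted cone $\text{cone}(G_t^{B;t_0})$ (and the claimed containment between the two is itself unjustified). The remedy is not a common refinement against the intermediate polytopes $\mu_{i_s}\cdots\mu_{i_1}(N)$: it is to run Theorem \ref{c-vectors in N} (ii) along the \emph{reversed} path, i.e.\ to take the single polytope $N\p$ of $\A(-B^\top)$ at $t_0$ whose image under $\mu$ is an $n$-cube at $t$ (the Newton polytope of the product of the cluster variables in the seeds at $t$ and its neighbours). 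Its normal fan already has $\text{cone}(G_t^{B;t_0})$ as a maximal cone, so every $g$-cone is realized by one polytope in the defining family of $\mathcal{N}$; no refinement is needed for existence, and it is unclear that your proposed refinement of the cones $\text{cone}\bigl((G_t^{-B;t_0})^{-1}\bigr)$ would ever produce the $g$-fan.

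Second, and more seriously, the decisive step is to rule out spurious rays of $\mathcal{N}$ inside a $g$-cone; simpliciality and full dimension (your appeal to Corollary \ref{c-vectors or g-vectors form a Z-basis}) do not prevent $\mathcal{N}$ from subdividing $\text{cone}(G_t^{B;t_0})$ further, and your plan only asserts that ``this should reduce to an inductive tracking.'' The paper's mechanism is concrete: let $v$ be a facet normal of an arbitrary $N_h|_{\A(-B^\top)}$ with $v\in\text{cone}(G_t^{B;t_0})$, form $N\p\oplus N_h|_{\A(-B^\top)}$, and mutate it along $\mu$ to $t$. By row sign-coherence of $G$-matrices and Theorem \ref{c-vectors in N} (ii), the normal correlating to $v$ stays inside the cone generated by the normals correlating to the columns of $G_t^{B;t_0}$, and after $\mu$ that cone is the non-negative orthant; Lemma \ref{fan N contains non-negative and non-positive cones} (used here in the heart of the induction, not only at the base case) then forces the transported normal to be some $e_i$, whence $v=g_{l;t}^{B;t_0}$. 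This ``transport the whole picture to $t$ and invoke the orthant lemma'' argument is the missing idea; without it your claim that the accumulated rays are precisely the columns of $G_t^{B;t_0}$ is unsupported.
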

\begin{proof}
  We only prove the first part, the second part can be dealt with dually.

  It is clear all $g$-vectors are in $\mathcal{N}$ due to Theorem \ref{c-vectors in N} (ii), so we only need to show there is no non-$g$-vector normal vector of polytopes in the cone generated by columns of a $G$-matrix.

  Assume $v$ is a normal vector of a facet of $N_h|_{\A(-B^\top)}$ and $v$ lies in the cone generated by columns of $G_t^{B;t_0}$. Let $\mu$ be the mutation sequence induced by the path from $t_0$ to $t$ and $N\p$ be the Newton polytope of $\prod\limits_{i=1}^n\rho_{g_{i;t}^{B;t_0}}\rho_{g_{i;t_i}^{B;t_0}}$, where $t_i$ is the vertex connected to $t$ by an edge labeled $i$. Then $\mu(N\p)$ is an $n$-cube. By Theorem \ref{c-vectors in N} (ii), the cone generated by columns of $G_t^{B;t_0}$ is a cone in $\mathcal{N}(N\p)$. Let $N''=N\p\oplus N_h|_{\A(-B^\top)}$ and then consider $\mu(N'')$. Because of the row sign-coherence of $G$-matrices and Theorem \ref{c-vectors in N} (ii), under any mutations, the normal correlating to $v$ (that is, the normal of the facet correlating to the facet of which $v$ is the normal) lies in the cone generated by normals correlating to columns of $G_t^{B;t_0}$. Therefore, in $\mathcal{N}(N'')$, since the cone generated by normals correlating to columns of $G_t^{B;t_0}$ under $\mu$ is the non-negative cone, the normal correlating to $v$ under $\mu$ is a non-negative vector. Then Lemma \ref{fan N contains non-negative and non-positive cones} induces the normal correlating to $v$ equals $e_i$ for some $i\in[1,n]$, which means $v=g_{l;t}^{B;t_0}$ for some $l\in[1,n]$. Hence any cone in the $g$-fan is contained in $\mathcal{N}$.
\end{proof}

\vspace{8mm}

\textbf{Acknowledgements:}\;\emph{This paper is mainly written while the author was a research assistant in Hong Kong University, the author would like to thank Jiang-Hua Lu for her helps during his stay in Hong Kong. He also thanks Fang Li for so many suggestions, which improves this paper a lot. The author would also like to thanks Peigen Cao, Jiarui Fei, Antoine de Saint Germain, Siyang Liu, Lang Mou and Lujun Zhang for discussion.}

\emph{Research supported by the Research Grants council of the Hong Kong SAR, China GRF 17306621.}

\end{document}